\def\reels{\mathbb{R}}
\def\corps{\mathbb{K}}
\def\im{\mbox{\textnormal{im}}}
\def\g {\mathfrak}
\newtheorem{theo}{Theorem}
\newtheorem{prop}[theo]{Proposition}
\newtheorem{corr}[theo]{Corollary}
\newtheorem{lem}[theo]{Lemma}
\newtheorem{defi}{Definition}
\newtheorem{ex}{Example}
\title{Representations admitting two pairs of supplementary invariant spaces}
\author[Lionel Bérard Bergery]{Lionel Bérard Bergery}
\author[Tom Krantz]{Tom Krantz}
\thanks{e-mail: Lionel.Berard-Bergery@iecn.u-nancy.fr, Tom.Krantz@uni.lu\\
Address: Institut Élie Cartan Nancy. Université Henri Poincaré Nancy
1.\\ B.P. 239, F-54506 Vandoeuvre-lès-Nancy Cedex, France.\\
The second author was supported by BFR 03/095 of the ministry of
research of the G.-D. of Luxembourg }
\begin{document}
\begin{abstract}
We examine the lattice generated by two pairs of supplementary
vector subspaces of a finite-dimensional vector-space by intersection
and sum, with the aim of applying the results to the study of
representations admitting two pairs of supplementary invariant
spaces, or one pair and a reflexive form. We show that such a
representation is a direct sum of three canonical sub-representations
which we characterize. We then focus on holonomy representations
with the same property.
\end{abstract}
\maketitle
\section{Introduction}

A famous paper of Gelfand and Ponomarev~\cite{gp} classifies the
systems on four vector subspaces of a finite-dimensional vector
space. We focus on the systems of two pairs of supplementary spaces
and explore the lattice generated by sum and intersection starting
from the four spaces. The aim is to apply the results to lattices of
stable spaces of finite-dimensional representations and in
particular of holonomy representations of torsion free connections
preserving a reflexive form.

\section{Lattice generated by two pairs of supplementary spaces}
We suppose throughout the paper that $\corps$ is a commutative field
of characteristic different from 2.
\subsection{Definitions}\index{paire d'espaces supplémentaires}
We call {\em decomposition of a finite-dimensional $\corps$-vector
space $E$ into $2$ direct sums} a quintuplet ${\mathcal V}= (E, V_1,
V_2, W_1, W_2)$ where $V_1, V_2, W_1$ and $W_2$ are four
vector subspaces of the finite-dimensional vector space $E$
verifying $V_1 \oplus V_2 = W_1 \oplus W_2=E$.

\begin{ex}
In particular if $E$ carries a non-degenerate reflexive
structure(i.e. for us a non-degenerate symmetric or antisymmetric
bilinear form) and if $E=V_1\oplus V_2$ then
$(E,V_1,V_2,V_1^\perp,V_2^\perp)$ is a decomposition of $E$ into $2$
direct sums.
\end{ex}

Associated to a decomposition of a finite-dimensional
$\corps$-vector space $E$ into $2$ direct sums ${\mathcal V}=(E,
V_1, V_2, W_1, W_2)$ is a dual decomposition into two direct sums:
${\mathcal V}^*=(E^*, W'_1, W'_2, V'_1, V'_2)$, with $X':=\{u\in E^*
\; \vrule \; u(X)=0 \;\}$.

If $E=E_1\oplus E_2$ is a direct sum, let $p_{E_1}^{E_2}$ be the
projection on $E_1$ parallely to $E_2$. To simplify notations lets
write $p_i$ for the projection on $V_i$ parallely to $V_{\tau(i)}$
and $q_i$ the projection on $W_i$ parallely to $W_{\tau(i)}$. We
define the map $\theta_{\mathcal V}: E\to E$ by $\theta_{\mathcal V}
= p_{W_1}^{W_2} \circ p_{V_1}^{V_2} - p_{V_1}^{V_2} \circ
p_{W_1}^{W_2}$. To simplify notations we write $\theta$ for
$\theta_{\mathcal V}$ if it is clear which $\mathcal V$ we mean.

It is easy to verify:
\begin{lem}
$\theta  =  p_{W_1}^{W_2} \circ p_{V_1}^{V_2} - p_{V_1}^{V_2} \circ
p_{W_1}^{W_2} =  p_{W_2}^{W_1} \circ p_{V_2}^{V_1} - p_{V_2}^{V_1}
\circ p_{W_2}^{W_1} = p_{V_2}^{V_1} \circ p_{W_1}^{W_2} -
p_{W_1}^{W_2} \circ p_{V_2}^{V_1} = p_{V_1}^{V_2} \circ
p_{W_2}^{W_1} - p_{W_2}^{W_1} \circ p_{V_1}^{V_2}$
\end{lem}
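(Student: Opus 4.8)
The plan is to deduce the three further formulas for $\theta$ from the two elementary relations
\[
p_{V_1}^{V_2} + p_{V_2}^{V_1} = \mathrm{id}_E, \qquad p_{W_1}^{W_2} + p_{W_2}^{W_1} = \mathrm{id}_E ,
\]
which hold precisely because $V_1\oplus V_2 = E$ and $W_1\oplus W_2 = E$ (a projection along a decomposition and the complementary projection sum to the identity). To keep the bookkeeping light I would abbreviate $a := p_{V_1}^{V_2}$ and $b := p_{W_1}^{W_2}$, so that $p_{V_2}^{V_1} = \mathrm{id}_E - a$, $p_{W_2}^{W_1} = \mathrm{id}_E - b$, and by definition $\theta = ba - ab$.

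In this notation the second candidate expression is
\[
(\mathrm{id}_E - b)(\mathrm{id}_E - a) - (\mathrm{id}_E - a)(\mathrm{id}_E - b) = \bigl(\mathrm{id}_E - a - b + ba\bigr) - \bigl(\mathrm{id}_E - a - b + ab\bigr) = ba - ab ,
\]
the third is $(\mathrm{id}_E - a)b - b(\mathrm{id}_E - a) = (b - ab) - (b - ba) = ba - ab$, and the fourth is $a(\mathrm{id}_E - b) - (\mathrm{id}_E - b)a = (a - ab) - (a - ba) = ba - ab$. In each case the terms linear in $a$ or in $b$, as well as the constant term, occur twice with opposite signs and cancel, leaving $ba - ab = \theta$. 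Note that the idempotency $a^2 = a$, $b^2 = b$ is not needed for this particular statement; only the complementarity relations above enter.

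There is essentially no obstacle here, so the only point deserving care is the order of composition: the two summands in each difference are genuinely non-commuting products of projections — and it is exactly this non-commutativity that makes $\theta$ a non-trivial object — so one must expand the products without silently reordering factors. A more conceptual route would exploit the symmetry $a \mapsto \mathrm{id}_E - a$, $b \mapsto \mathrm{id}_E - b$ coming from swapping $(V_1,V_2)$ or $(W_1,W_2)$, but tracking which projection is applied first makes that argument roughly as long as the direct expansion above.
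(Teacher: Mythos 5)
Your computation is correct: writing $a=p_{V_1}^{V_2}$, $b=p_{W_1}^{W_2}$ and using $p_{V_2}^{V_1}=\mathrm{id}_E-a$, $p_{W_2}^{W_1}=\mathrm{id}_E-b$, each of the three other commutator expressions expands to $ba-ab=\theta$. The paper omits the proof entirely (``It is easy to verify''), and your argument is exactly the routine verification intended.
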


We have also:
\begin{lem}\label{swap}
$\theta(V_i) \subset V_{\tau(i)}$ and $\theta(W_i) \subset
W_{\tau(i)}$
\end{lem}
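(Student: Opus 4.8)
The plan is to verify the four inclusions one at a time, and in each case to evaluate $\theta$ on a vector of the relevant subspace using whichever of the four expressions for $\theta$ supplied by the preceding lemma makes one of the two composites collapse. Beyond that lemma, the only facts needed are the idempotent relations $p_1 + p_2 = \mathrm{id}_E$ and $q_1 + q_2 = \mathrm{id}_E$, together with $p_i(V_i)=V_i$, $p_i(V_{\tau(i)})=0$, and the analogous statements for the $q_i$ and the $W_i$.

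Concretely, for $v\in V_1$ I would use the form $\theta = q_1 p_1 - p_1 q_1$, so that $\theta(v) = q_1(v) - p_1 q_1(v) = (\mathrm{id}_E - p_1)\,q_1(v) = p_2\,q_1(v) \in V_2 = V_{\tau(1)}$. For $v\in V_2$ the symmetric choice $\theta = q_2 p_2 - p_2 q_2$ gives $\theta(v) = (\mathrm{id}_E - p_2)\,q_2(v) = p_1\,q_2(v) \in V_1$. For $w\in W_1$ I would instead take $\theta = p_2 q_1 - q_1 p_2$, whence $\theta(w) = p_2(w) - q_1 p_2(w) = (\mathrm{id}_E - q_1)\,p_2(w) = q_2\,p_2(w) \in W_2 = W_{\tau(1)}$; and for $w\in W_2$ the expression $\theta = p_1 q_2 - q_2 p_1$ yields $\theta(w) = (\mathrm{id}_E - q_2)\,p_1(w) = q_1\,p_1(w) \in W_1$. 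Since any element of $V_i$ (resp. $W_i$) is such a vector and $\theta$ is linear, the four claimed inclusions follow.

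There is no genuine obstacle here: the whole point is having the four equivalent forms of $\theta$ at hand, which is exactly what the preceding lemma provides, and the only ``idea'' is to pair each target subspace with the form of $\theta$ whose left-most factor is the complementary projector. The single point deserving a little care is the bookkeeping of the index $\tau(i)$: one must check in each of the four cases that the surviving projector lands in $V_{\tau(i)}$ (resp. $W_{\tau(i)}$) and not back in the subspace one started from, which is precisely why the complementary projector $\mathrm{id}_E - p_i = p_{\tau(i)}$ (resp. $\mathrm{id}_E - q_i = q_{\tau(i)}$) is what appears.
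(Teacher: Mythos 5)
Your proof is correct: each of the four computations uses a valid form of $\theta$ from the preceding lemma, collapses one composite via $p_i|_{V_i}=\mathrm{id}$ (resp.\ $q_j|_{W_j}=\mathrm{id}$), and correctly identifies the complementary projector $\mathrm{id}_E-p_i=p_{\tau(i)}$ (resp.\ $\mathrm{id}_E-q_j=q_{\tau(j)}$) so that the image lands in $V_{\tau(i)}$ (resp.\ $W_{\tau(j)}$). The paper states this lemma without proof, and your argument is exactly the intended routine verification.
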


\begin{lem} If ${\mathcal V}^*$ is the dual system of $\mathcal V$ then
$\theta_{{\mathcal V}^*}=(\theta_{\mathcal V})^*$
\end{lem}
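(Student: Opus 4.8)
\emph{Proof sketch.} The plan is to reduce the identity to a single observation about the transpose of a projection, and then to recognise the resulting formula as one of the four expressions for $\theta$ supplied by the first lemma above, written this time for $\mathcal V^{*}$.

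The observation is that for any direct sum $E=E_1\oplus E_2$ the transpose of the projection $p_{E_1}^{E_2}\colon E\to E$ is again a projection, namely
\[
(p_{E_1}^{E_2})^{*}=p_{E_2'}^{E_1'}\colon E^{*}\longrightarrow E^{*}.
\]
Indeed $(p_{E_1}^{E_2})^{*}u=u\circ p_{E_1}^{E_2}$ vanishes exactly when $u$ vanishes on $\im p_{E_1}^{E_2}=E_1$, i.e.\ when $u\in E_1'$; and if $u\in E_2'$ then $u\circ p_{E_1}^{E_2}$ coincides with $u$ on $E_1$ and on $E_2$, hence is $u$ itself, so the image of $(p_{E_1}^{E_2})^{*}$ is $E_2'$. (That $E_1'\oplus E_2'=E^{*}$, which is what makes $\mathcal V^{*}$ a decomposition into two direct sums, follows from $E_1'\cap E_2'=(E_1+E_2)'=0$ and a dimension count.)

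Applying this to the two projections occurring in $\theta_{\mathcal V}=p_{W_1}^{W_2}\circ p_{V_1}^{V_2}-p_{V_1}^{V_2}\circ p_{W_1}^{W_2}$, together with $(f\circ g)^{*}=g^{*}\circ f^{*}$ and the linearity of transposition, I would get
\[
(\theta_{\mathcal V})^{*}=p_{V_2'}^{V_1'}\circ p_{W_2'}^{W_1'}-p_{W_2'}^{W_1'}\circ p_{V_2'}^{V_1'}.
\]
On the other hand, since by definition $\mathcal V^{*}=(E^{*},W_1',W_2',V_1',V_2')$, the $V$-pair of $\mathcal V^{*}$ is $(W_1',W_2')$ and its $W$-pair is $(V_1',V_2')$; hence $\theta_{\mathcal V^{*}}=p_{V_1'}^{V_2'}\circ p_{W_1'}^{W_2'}-p_{W_1'}^{W_2'}\circ p_{V_1'}^{V_2'}$, which by the first lemma above (its second expression, obtained by the index swap $1\leftrightarrow 2$ in both pairs) equals $p_{V_2'}^{V_1'}\circ p_{W_2'}^{W_1'}-p_{W_2'}^{W_1'}\circ p_{V_2'}^{V_1'}$, precisely the expression just found for $(\theta_{\mathcal V})^{*}$. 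This proves $(\theta_{\mathcal V})^{*}=\theta_{\mathcal V^{*}}$.

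The one point requiring care — though it is mild — is the bookkeeping: keeping straight which subspaces of $E^{*}$ play the role of the $V$'s and which that of the $W$'s in $\mathcal V^{*}$, and noticing that transposition trades ``projection onto $E_1$'' for ``annihilator of $E_1$'', so that the computation naturally lands on the $1\leftrightarrow 2$-swapped form of $\theta_{\mathcal V^{*}}$ rather than on its defining form; it is exactly the first lemma that licenses passing from one to the other.
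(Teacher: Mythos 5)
Your proof is correct and follows essentially the same route as the paper's: transpose the defining expression for $\theta_{\mathcal V}$, use $(f\circ g)^*=g^*\circ f^*$ together with $(p_{E_1}^{E_2})^*=p_{E_2'}^{E_1'}$, and recognise the result as $\theta_{\mathcal V^*}$. You merely make explicit two points the paper leaves implicit — the verification that the transpose of a projection is the complementary annihilator projection, and the appeal to the four-expression lemma to match the $1\leftrightarrow 2$-swapped form — which is a welcome clarification, not a different argument.
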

\begin{proof} We have:
\begin{eqnarray*} (\theta_{\mathcal V})^* & = &
(p_{W_1}^{W_2}\circ p_{V_1}^{V_2} -p_{V_1}^{V_2} \circ
p_{W_1}^{W_2})^*\\
& = & (p_{W_1}^{W_2}\circ p_{V_1}^{V_2})^* - (p_{V_1}^{V_2} \circ
p_{W_1}^{W_2})^*\\
& = & (p_{V_1}^{V_2})^*\circ (p_{W_1}^{W_2})^* -(p_{W_1}^{W_2})^*
\circ
(p_{V_1}^{V_2})^*\\
& = & p_{V'_2}^{V'_1}\circ p_{W'_2}^{W'_1} -p_{W'_2}^{W'_1} \circ
p_{V'_2}^{V'_1}\\
& = & \theta_{{\mathcal V}^*}
\end{eqnarray*}
\end{proof}

\subsection{Canonical decomposition of $E$}

\begin{defi} Let us define a sequence of vector subspaces of $E$:
$F(0):=\{0\}$, $F(n+1):=\sum_{i,j} ((F(n)+V_i) \cap (F(n)+W_j))$ for
$n\ge 0$.
\end{defi}

$(F(n))_n$ is an increasing sequence of vector subspaces of the
finite-dimensional vector space $E$ and necessarily stationary Let
us write $F$ or $F(\infty)$ the space $\sum_n F(n)$. $F$ is the
smallest fix-point of the increasing mapping $X \mapsto \sum_{i,j}
((X+V_i) \cap (X+W_j))$, and $F$ is the smallest common fix-point of
the four increasing mappings $X \mapsto (X+V_i) \cap (X+W_j)$ for
$i,j\in \{1,2\}$.

\begin{lem}\label{F1sd}
$F(1)=\bigoplus_{i,j} V_i \cap W_j$
\end{lem}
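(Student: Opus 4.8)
The plan is to unwind the definition and then check directness with a one-line projection argument. Since $F(0)=\{0\}$, the defining recursion gives at once
$F(1)=\sum_{i,j\in\{1,2\}}\big((\{0\}+V_i)\cap(\{0\}+W_j)\big)=\sum_{i,j}V_i\cap W_j$,
so the whole content of the statement is that the sum of the four subspaces $V_i\cap W_j$ is direct. In other words, I must show that if $x_{11}+x_{12}+x_{21}+x_{22}=0$ with $x_{ij}\in V_i\cap W_j$, then every $x_{ij}$ vanishes.

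To do this I would apply the projection $p_1=p_{V_1}^{V_2}$ to the relation. Since $x_{11},x_{12}\in V_1$ and $x_{21},x_{22}\in V_2=\ker p_1$, this yields $x_{11}+x_{12}=0$, and applying $p_2=p_{V_2}^{V_1}$ (equivalently, subtracting) gives $x_{21}+x_{22}=0$. Now from $x_{11}=-x_{12}$ one has $x_{11}\in W_1$ and $x_{11}=-x_{12}\in W_2$, hence $x_{11}\in W_1\cap W_2=\{0\}$; therefore $x_{11}=x_{12}=0$, and symmetrically $x_{21}=x_{22}=0$. This proves that the sum is direct, i.e. $F(1)=\bigoplus_{i,j}V_i\cap W_j$.

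There is no genuine obstacle here; the only point requiring a little care is keeping track of which hypothesis separates which index. The decomposition $V_1\oplus V_2=E$ is what lets the projections $p_1,p_2$ split off the index $i$, while the decomposition $W_1\oplus W_2=E$ enters only through $W_1\cap W_2=\{0\}$ to pin down the index $j$; nothing beyond these two supplementarity assumptions is used. One could also organize the same computation by composing projections $p_i$ with the $q_j=p_{W_j}^{W_{\tau(j)}}$, but the argument above is the most economical.
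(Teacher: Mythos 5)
Your proof is correct and follows the same route as the paper: unwinding the definition with $F(0)=\{0\}$ to get $F(1)=\sum_{i,j}V_i\cap W_j$, and then checking directness, which the paper simply declares ``easy to see'' and leaves to the reader. Your projection argument ($p_{V_1}^{V_2}$ to separate the index $i$, then $W_1\cap W_2=\{0\}$ to separate $j$) is a valid and clean way to supply that omitted detail.
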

\begin{proof} By definition we have $F(1)=\sum_{i,j} V_i \cap W_j$, and it
is easy to see that the sum is necessarily direct.
\end{proof}

\begin{defi}
Let us define a sequence of vector subspaces of $E$: $\tilde F(0):=E$
$\tilde F(n+1):=\bigcap_{i,j} ((\tilde F(n) \cap V_i) + (\tilde F(n)
\cap W_j))$ for $n\ge 0$.
\end{defi}

$(\tilde F(n))_n$ if a decreasing sequence of vector subspaces of the
finite-dimensional vector-space $E$ and so stationary. Let$\tilde
F(\infty)$ or simply $\tilde F$ be the space $\bigcap_n \tilde
F(n)$. $\tilde F$ is the biggest fix-point of the decreasing mapping
$X \mapsto \bigcap_{i,j} ((X \cap V_i) + (X \cap W_j))$, and $\tilde
F$ is the biggest common fix-point of the four decreasing mappings $X
\mapsto (X \cap V_i) + (X \cap W_j)$, for $i,j\in\{1,2\}$.

\begin{prop}\label{kerimtheta} For every non-negative integer $n$
\begin{enumerate}
\item $\ker \theta^n=F(n)$ \item $\im \theta^n=\tilde F(n)$
\end{enumerate}
\end{prop}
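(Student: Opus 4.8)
The plan is to prove both statements simultaneously by induction on $n$, since the maps $X\mapsto (X+V_i)\cap(X+W_j)$ and $X\mapsto (X\cap V_i)+(X\cap W_j)$ that define $F(n)$ and $\tilde F(n)$ should match, respectively, the kernel and image behaviour of one more application of $\theta$. The base case $n=0$ is immediate: $\ker\theta^0=\ker\mathrm{id}=\{0\}=F(0)$ and $\im\theta^0=E=\tilde F(0)$. For the inductive step the key is a single lemma computing how $\ker$ and $\im$ of $\theta$ interact with the four subspaces $V_i,W_j$; I will isolate this as the technical heart.

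First I would establish the core claim for $\ker\theta$: namely that for any $\theta$-invariant subspace — or more precisely under the hypothesis that we already control $\ker\theta^n$ — we have $\ker\theta^{n+1}=\sum_{i,j}\bigl((\ker\theta^n+V_i)\cap(\ker\theta^n+W_j)\bigr)$, which by definition is $F(n+1)$ once $\ker\theta^n=F(n)$. The inclusion $\supseteq$ should follow from Lemma~\ref{swap}: if $x\in (\ker\theta^n+V_i)\cap(\ker\theta^n+W_j)$, write $x=a+v=b+w$ with $a,b\in\ker\theta^n$, $v\in V_i$, $w\in W_j$; then using the expressions for $\theta$ in the first Lemma (picking the form of $\theta$ that annihilates $V_i$ on the right, i.e. $\theta=p_{W_1}^{W_2}p_{V_1}^{V_2}-p_{V_1}^{V_2}p_{W_1}^{W_2}$ kills $V_2$-components appropriately, etc.) one sees $\theta(v)$ lands in the span of the $W$'s and similarly $\theta(w)$; a short computation gives $\theta(x)\in\ker\theta^n$, hence $\theta^{n+1}(x)=0$. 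For the reverse inclusion $\subseteq$, given $x\in\ker\theta^{n+1}$, set $y:=\theta(x)\in\ker\theta^n=F(n)$; I need to write $x$ as a sum of elements of the four spaces $(F(n)+V_i)\cap(F(n)+W_j)$. The natural device is to decompose $x=p_i(x)+p_{\tau(i)}(x)$ along $V_1\oplus V_2$ and along $W_1\oplus W_2$, and check that each piece, modulo $F(n)$, lies in the required intersection; the condition $\theta(x)\in F(n)$ is exactly what makes the $V$-decomposition and the $W$-decomposition compatible modulo $F(n)$.

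Then I would dualize to get the statement for $\im\theta^n$. Rather than redo the computation, I would invoke the Lemma that $\theta_{{\mathcal V}^*}=(\theta_{\mathcal V})^*$ together with the standard duality $\im(\phi^*)=(\ker\phi)^\perp$ and $\ker(\phi^*)=(\im\phi)^\perp$ for an endomorphism $\phi$ of a finite-dimensional space, applied to $\phi=\theta^n$ (note $(\theta^n)^*=(\theta^*)^n$). Combined with the order-reversing bijection $X\mapsto X'$ between subspaces of $E$ and of $E^*$, which turns sums into intersections and intersections into sums and sends $V_i\mapsto V_i'$, $W_j\mapsto W_j'$, the recursion defining $F(n)$ for ${\mathcal V}^*$ becomes precisely the recursion defining $\tilde F(n)$ for ${\mathcal V}$. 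Applying part (1) to ${\mathcal V}^*$ and annihilating thus yields $\im\theta^n=\tilde F(n)$ for ${\mathcal V}$. This reduces the whole proposition to part (1), i.e. to the kernel computation.

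I expect the main obstacle to be the reverse inclusion $\ker\theta^{n+1}\subseteq F(n+1)$ in part (1) — that is, showing every $x$ with $\theta(x)\in F(n)$ actually decomposes into the four pieces $(F(n)+V_i)\cap(F(n)+W_j)$. One must be careful that the ambient spaces $F(n)+V_i$ and $F(n)+W_j$ are not $\theta$-invariant in general, so the bookkeeping of which component of $x$ lands where, modulo $F(n)$, has to be done explicitly using the several equivalent formulas for $\theta$ from the first Lemma (each formula is adapted to a different choice of $(i,j)$). Everything else — the base case, the easy inclusion, and the duality passage to $\tilde F$ — should be routine.
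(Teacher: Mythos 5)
Your proposal follows essentially the same route as the paper: part (1) by induction, using Lemma~\ref{swap} and the decomposition $x=\sum_{i,j}(q_j\circ p_i)(x)$ together with the congruence $(q_j\circ p_i)(x)\equiv(p_i\circ q_j)(x)$ modulo $F(n)$ for the harder inclusion, and part (2) by the identical duality argument via $\theta_{{\mathcal V}^*}=(\theta_{\mathcal V})^*$ and the annihilator exchanging sums and intersections. The only point to make explicit when writing it up is the base computation $\theta\vrule_{V_i\cap W_j}=0$ (the step from $n=0$ to $n=1$ in the easy inclusion does not follow from the recursion alone), which is exactly the direct calculation the paper performs.
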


\begin{proof}
\begin{enumerate}
\item Let us show first that $\ker \theta=F(1)=V_1 \cap W_1 + V_1 \cap
W_2 + V_2 \cap W_1 + V_2 \cap W_2$. If $x\in V_i \cap W_j$,
$\theta(x)=(-1)^{i+j}((p_i \circ q_j)(x) - (q_j \circ
p_i)(x))=(-1)^{i+j}(x - x) =0$. As $\theta$ is linear,
$\theta(F(1))=0$.

Inversely if $\theta(x)=0$, we have $(q_1 \circ p_1 - p_1 \circ
q_1)(x)=0$ and so $(q_1 \circ p_1)(x)=(p_1 \circ q_1)(x)$. We have
$(q_1 \circ p_1)(x)\in V_1 \cap W_1$. Similarly $(q_j \circ
p_i)(x)=(p_i \circ q_j)(x)$ an so $(q_j \circ p_i)(x)\in V_i \cap
W_j$. We deduce $x=q_1(x)+q_2(x)=\sum_{i,j} (q_j \circ p_i)(x)\in
\sum_{i,j} V_i \cap W_j=F(1)$.

\medskip

Let us show $F(n) \subset \ker \theta^n$. For $n=0$ it is clear. If
$n=k+1$, suppose $\ker \theta^k=F(k)$. Let $x\in F(n)=\sum_{i,j}
((F(k)+V_i) \cap (F(k)+W_j))$. $x$ can be written
$x_{11}+x_{22}+x_{12}+x_{21}$ with $x_{ij}\in ((F(k)+V_i) \cap
(F(k)+W_j))$. $x_{ij}= y_{ij} + z_{ij} = t_{ij} + u_{ij}$ with
$y_{ij},t_{ij}\in F(k)$, $z_{ij}\in V_i$ and $u_{ij}\in W_j$. We
have be induction hypothesis $\theta^k(y_{ij})=0$ and
$\theta^k(t_{ij})=0$. Be iterated application of lemma~\ref{swap} we
have $\theta^k(z_{ij})\in V_{\tau^k(i)}$ et $\theta^k(u_{ij})\in
W_{\tau^k(i)}$. As a consequence $\theta^k(x_{ij})\in
V_{\tau^k(i)}\cap W_{\tau^k(i)}$ and so $\theta^k(x)\in F(1)=\ker
\theta$, giving: $\theta^{k+1}(x)=0$.

Let us show $\ker \theta^n \subset F(n)$. For $n=0$, $\ker \theta^0=
\{0\}=F(0)$. For $n=k+1$, suppose $\ker \theta^k \subset F(k)$. Let
$x$ be such that $\theta^n(x)=0$. We have then
$\theta^k(\theta(x))=0$. By induction hypothesis $\theta(x)\in
F(k)$. So $(q_j \circ p_i)(x)-(p_i \circ q_j)(x)\in F(k)$ and as a
consequence: $(q_j \circ p_i)(x)\in (F(k)+V_i)$. As $(q_j \circ
p_i)(x)\in W_j$, $(q_j \circ p_i)(x)\in (F(k)+V_i)\cap W_j \subset
(F(k)+V_i)\cap (F(k)+W_j)$. Finally $x=\sum_{i,j} (q_j \circ p_i)(x)
\in \sum_{i,j} ((F(k)+V_i)\cap (F(k)+W_j))=F(n)$.

\medskip

\item





To show that $\im \theta^n=\tilde F(n)$, we will use
duality\footnote{We use the following lemma which is easy to show:
For $\Psi\in {\mathcal L}(E,F)$, $\ker \Psi^*=(\im \Psi)'$ and $\im
\Psi^*=(\ker \Psi)'.$ }:

In finite dimension it is easy to show by induction that for every
$n$, $(F_{\mathcal V}(n))'=\tilde F_{{\mathcal V}^*}(n)$ and
$(\tilde F_{\mathcal V}(n))'=F_{{\mathcal V}^*}(n)$.

So we have: $(\tilde F_{\mathcal V}(n))''=(F_{{\mathcal
V}^*}(n))'=(\ker \theta^n_{{\mathcal V}^*})'=(\ker
(\theta^*_{\mathcal V})^n)'=(\ker (\theta_{\mathcal V}^n)^*)'=(\im
\theta^n_{\mathcal V})''$. By injectivity in finite dimension of
$''$ we have $\im \theta^n_{\mathcal V}=\tilde F_{\mathcal V}(n)$.

\end{enumerate}
\end{proof}

\begin{prop}
\begin{enumerate}

\item $\forall n, F(n+1)=\theta^{-1}(F(n)),$

\item $\forall n, \tilde F(n+1)=\theta(\tilde F(n)).$
\end{enumerate}
\end{prop}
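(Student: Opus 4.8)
The plan is to deduce both identities directly from Proposition~\ref{kerimtheta}, which provides the dictionary $F(n)=\ker\theta^n$ and $\tilde F(n)=\im\theta^n$. Once this dictionary is in place, neither statement requires any further manipulation of the lattice operations $\cap$ and $+$; each becomes an elementary fact about iterates of a single linear endomorphism of the finite-dimensional space $E$. The three-line computations below work uniformly for all $n\ge 0$, so no separate treatment of a base case is needed (for $n=0$ they reduce to $F(1)=\theta^{-1}(\{0\})=\ker\theta$ and $\tilde F(1)=\theta(E)=\im\theta$, which are the contents of Proposition~\ref{kerimtheta} for $n=1$).

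For (1), I would argue that, since $\theta^{-1}$ here denotes the preimage,
$$\theta^{-1}(F(n))=\theta^{-1}(\ker\theta^n)=\{\,x\in E : \theta^n(\theta(x))=0\,\}=\ker\theta^{n+1}=F(n+1),$$
the last equality again being Proposition~\ref{kerimtheta}. For (2), similarly,
$$\theta(\tilde F(n))=\theta(\im\theta^n)=\theta(\theta^n(E))=\theta^{n+1}(E)=\im\theta^{n+1}=\tilde F(n+1).$$

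There is thus essentially no obstacle left: the real work was already absorbed into Proposition~\ref{kerimtheta} (and, through it, into the inductions using Lemma~\ref{swap} and the duality $\mathcal V\leftrightarrow\mathcal V^*$). The only point worth a remark is that (1) and (2) are mutually dual under that same correspondence — preimage under $\theta$ on $E$ corresponds to image under $\theta^*=\theta_{{\mathcal V}^*}$ on $E^*$ — so one could alternatively prove (1) directly by an induction unwinding $F(n+1)=\sum_{i,j}((F(n)+V_i)\cap(F(n)+W_j))$ in the spirit of the proof that $\ker\theta^n=F(n)$, and then obtain (2) for free by dualizing. Given Proposition~\ref{kerimtheta}, however, the computations above are the natural and shortest route.
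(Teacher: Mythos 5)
Your proof is correct and follows essentially the same route as the paper: both deduce the two identities immediately from Proposition~\ref{kerimtheta} via $\theta^{-1}(\ker\theta^n)=\ker\theta^{n+1}$ and $\theta(\im\theta^n)=\im\theta^{n+1}$. (Your write-up is in fact slightly more complete, since the paper's displayed chain for point (2) trails off with a typo rather than ending at $\tilde F(n+1)$.)
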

\begin{proof} We have:
$F(n+1)=\ker(\theta^{n+1})=\theta^{-1}(\ker(\theta^n))=\theta^{-1}(F(n))$
et $\theta(\tilde F(n))=\theta(\im(\theta^n))=\theta(\tilde
F(n))$.\end{proof}

From the first point one can deduce: $\forall n,
\theta(F(n+1))\subset F(n)$.

We recall without proof the following well known result:
\begin{prop}
If $E$ is a finite-dimensional vector space and $\Psi$ an
endomorphism of $E$ then the two subspaces of $E$: $E_N=\sum_n
\ker(\Psi^n)$ and $E_I=\bigcap_n \im(\Psi^n)$ are stable by $\Psi$
and we have $E=E_N \oplus E_I$. Moreover $\Psi\vrule_{E_N}$ is
nilpotent and $\Psi\vrule_{E_I}$ is invertible.
\end{prop}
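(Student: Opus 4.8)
This is the classical Fitting lemma for an endomorphism of a finite-dimensional vector space, so the plan is to give the standard argument. First I would observe that both chains are eventually stationary: $\ker(\Psi^n)$ is increasing and $\im(\Psi^n)$ is decreasing, and since $\dim E<\infty$ there is an index $N$ with $\ker(\Psi^N)=\ker(\Psi^{N+1})=\cdots=E_N$ and $\im(\Psi^N)=\im(\Psi^{N+1})=\cdots=E_I$; one may take the same $N$ for both by enlarging. Stability under $\Psi$ is immediate from the formulas $\Psi(\ker\Psi^n)\subset\ker\Psi^{n-1}\subset\ker\Psi^n$ and $\Psi(\im\Psi^n)\subset\im\Psi^n$ (indeed $\Psi(\im\Psi^N)=\im\Psi^{N+1}=\im\Psi^N$, which already gives surjectivity of $\Psi|_{E_I}$).

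Next I would establish $E=E_N\oplus E_I$ by working with the single map $\Psi^N$. Since $\ker\Psi^N=\ker\Psi^{2N}$, the kernel of $\Psi^N$ meets its image trivially: if $x=\Psi^N(y)$ with $\Psi^N(x)=0$ then $\Psi^{2N}(y)=0$, so $\Psi^N(y)=0$, i.e. $x=0$. Hence $E_N\cap E_I=\ker\Psi^N\cap\im\Psi^N=\{0\}$. For the sum, the rank–nullity theorem applied to $\Psi^N$ gives $\dim\ker\Psi^N+\dim\im\Psi^N=\dim E$, so the direct sum $E_N\oplus E_I$ has full dimension and equals $E$. (Alternatively, decompose any $x$ explicitly: write $\Psi^N(x)=\Psi^{2N}(z)$ using $\im\Psi^N=\im\Psi^{2N}$, then $x=(x-\Psi^N(z))+\Psi^N(z)$ with the first summand in $\ker\Psi^N$ and the second in $\im\Psi^N$.)

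Finally I would check the nilpotency and invertibility claims. By construction $\Psi^N$ vanishes on $E_N=\ker\Psi^N$, so $\Psi|_{E_N}$ is nilpotent of index at most $N$. On $E_I$ we already noted $\Psi(E_I)=E_I$, so $\Psi|_{E_I}$ is surjective; being a surjective endomorphism of a finite-dimensional space it is bijective, hence invertible. I do not expect any genuine obstacle here — the only point requiring a little care is the simultaneous stabilization of the two chains and the reuse of the stabilization index $N$ to get both $\ker\Psi^N=\ker\Psi^{2N}$ and $\im\Psi^N=\im\Psi^{2N}$, which is what makes the intersection-triviality and the explicit splitting work. As the statement says this is recalled without proof, one could equally just cite it; the sketch above is the argument one would write if a proof were wanted.
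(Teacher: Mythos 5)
Your argument is the standard Fitting lemma proof and is correct in every step: the stabilization of both chains at a common index $N$, the triviality of $\ker\Psi^N\cap\im\Psi^N$ via $\ker\Psi^N=\ker\Psi^{2N}$, the dimension count (or the explicit splitting) for $E=E_N\oplus E_I$, and the nilpotency/invertibility conclusions. The paper explicitly recalls this proposition \emph{without} proof, so there is no in-paper argument to compare against; your sketch supplies exactly the classical argument one would cite or write out.
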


The result applied to $E$ and the endomorphism $\theta$ gives us for
$F:=\sum_n F(n)$ and $\tilde F:=\bigcap_n \tilde F(n)$: $E=F\oplus
\tilde F$. Moreover $F$ and $\tilde F$ are stables by $\theta$ and
$\theta\vrule_F$ is nilpotent and $\theta\vrule_{\tilde F}$ is
invertible.

We say that the subspace $V$ of $E$ is {\em homogeneous}with respect
to the sum $E_1+ E_2$, where $E_1$ and $E_2$ are vector subspaces of
$E$ if: $V\cap(E_1+E_2)=(V\cap E_1)+(V\cap E_2)$. Similarly we say
that $V$ is {\em co-homogeneous} with respect to the intersection
$E_1\cap E_2$, if: $V+(E_1\cap E_2)=(V+E_1)\cap (V+E_2)$.

\begin{prop}\label{tildefv1v2}
\begin{enumerate}
\item $(\tilde F\cap V_1) \oplus (\tilde F\cap V_2)=\tilde F$
\item $(\tilde F\cap W_1) \oplus (\tilde F\cap W_2)=\tilde F$
\item $\forall i,j, (\tilde F\cap V_i) \oplus (\tilde F\cap
W_j)=\tilde F$
\end{enumerate}
\end{prop}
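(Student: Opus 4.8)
The plan is to exploit the fact that $\theta$ restricted to $\tilde F$ is invertible, together with Lemma~\ref{swap}, which tells us $\theta$ swaps the two pieces of each supplementary pair. The statement has three parts, but parts (1) and (2) are symmetric (swap the roles of the $V$'s and $W$'s), and once (1) and (2) are in hand, part (3) should follow by the same mechanism applied to the mixed pair $V_i, W_j$; the key structural fact is that $\theta$ maps $\tilde F \cap V_i$ isomorphically onto $\tilde F \cap V_{\tau(i)}$, and likewise for the $W$'s.

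First I would prove directness, i.e. that $(\tilde F \cap V_1) \cap (\tilde F \cap V_2) = \{0\}$: this is immediate since $V_1 \cap V_2 = \{0\}$ because $V_1 \oplus V_2 = E$. So the content is surjectivity: $\tilde F = (\tilde F \cap V_1) + (\tilde F \cap V_2)$. The reverse inclusion $\supseteq$ is trivial, so I need $\tilde F \subseteq (\tilde F \cap V_1) + (\tilde F \cap V_2)$. Here I would use that $\tilde F = \tilde F(\infty)$ is a fixed point of $X \mapsto \bigcap_{i,j}((X \cap V_i) + (X \cap W_j))$; in particular $\tilde F = \bigcap_{i,j}((\tilde F \cap V_i) + (\tilde F \cap W_j))$, hence $\tilde F \subseteq (\tilde F \cap V_1) + (\tilde F \cap W_1)$ and also $\tilde F \subseteq (\tilde F \cap V_1) + (\tilde F \cap W_2)$, etc. This already looks close, but I still need to convert the $W_j$ summand into a $V$-summand. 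The tool for that is $\theta$: since $\theta|_{\tilde F}$ is invertible and $\theta(\tilde F \cap W_j) \subseteq \tilde F \cap W_{\tau(j)}$ by Lemma~\ref{swap} (and $\theta$ preserves $\tilde F$), dimension counting forces $\theta(\tilde F \cap W_j) = \tilde F \cap W_{\tau(j)}$, and similarly $\theta(\tilde F \cap V_i) = \tilde F \cap V_{\tau(i)}$.

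The cleanest route: apply $\theta$ to the fixed-point identity. From $\tilde F = (\tilde F \cap V_1) + (\tilde F \cap W_1)$ (one of the four inclusions coming from $\tilde F$ being a fixed point — actually $\tilde F$ equals the intersection, so it is contained in each term) we get, applying $\theta$ and using invertibility on $\tilde F$, that $\tilde F = \theta(\tilde F) = \theta(\tilde F \cap V_1) + \theta(\tilde F \cap W_1) \subseteq (\tilde F \cap V_2) + (\tilde F \cap W_2)$. Combining $\tilde F \subseteq (\tilde F \cap V_1) + (\tilde F \cap W_1)$ with, say, $\tilde F \subseteq (\tilde F \cap V_2) + (\tilde F \cap W_1)$ (another fixed-point inclusion) and playing the four inclusions against each other — together with the $\theta$-shifted versions — should pin down $\tilde F = (\tilde F \cap V_1) + (\tilde F \cap V_2)$. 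Concretely, take $x \in \tilde F$; write $x = v_1 + w_1$ with $v_1 \in \tilde F \cap V_1$, $w_1 \in \tilde F \cap W_1$; then $w_1 = x - v_1 \in \tilde F$, and I want to show $w_1 \in (\tilde F \cap V_1) + (\tilde F \cap V_2)$; iterate, using that $\tilde F \cap W_1$ also decomposes via some fixed-point identity, and terminate using the invertibility/nilpotency dichotomy so the process cannot loop forever. Part (3) is then obtained by rerunning the argument with the pair $(V_i, W_j)$ in place of $(V_1, V_2)$, again using $\theta(\tilde F \cap V_i) = \tilde F \cap V_{\tau(i)}$ and $\theta(\tilde F \cap W_j) = \tilde F \cap W_{\tau(j)}$ to cross between the families.

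The main obstacle I anticipate is the bookkeeping in closing the induction/iteration: it is easy to write down the four fixed-point inclusions and the four $\theta$-shifted ones, but genuinely deducing a \emph{direct} sum decomposition $\tilde F = (\tilde F\cap V_1)\oplus(\tilde F\cap V_2)$ — rather than merely $\tilde F = (\tilde F \cap V_1) + (\tilde F \cap W_1)$ — requires carefully tracking that each correction term lands in the span of the $V$-pieces and that the process stabilizes. I expect the slickest formulation replaces the iteration by a direct dimension count: show $\dim(\tilde F \cap V_1) + \dim(\tilde F \cap V_2) \ge \dim \tilde F$ using the fixed-point identity and the fact that $\theta$ identifies $\tilde F \cap W_j$ with $\tilde F \cap W_{\tau(j)}$ inside $\tilde F$, so that the $W$-contributions to $\tilde F$ are "balanced" and can be traded for $V$-contributions. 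That dimension argument, rather than the inclusion-chasing, is where the real care is needed.
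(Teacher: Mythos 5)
Your closing instinct---replace the inclusion-chasing by a dimension count---is the right one and is essentially the paper's own argument, but as submitted the proposal has two genuine soft spots. First, the iteration you describe (write $x=v_1+w_1$ with $v_1\in\tilde F\cap V_1$, $w_1\in\tilde F\cap W_1$, then re-expand $w_1$) makes no progress: re-expanding $w_1$ via $\tilde F=(\tilde F\cap V_2)+(\tilde F\cap W_1)$ just produces another $W_1$-component, and no quantity decreases, so nothing forces termination; that route should be discarded, as you half-suspect. Second, and more seriously, for part (3) you claim directness follows ``by the same mechanism'' as in part (1); but the mechanism there was $V_1\cap V_2=\{0\}$, and the analogue $V_i\cap W_j=\{0\}$ is false in general. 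This is exactly where the paper brings in the other half of the structure: $V_i\cap W_j\subseteq F(1)=\ker\theta$ (Lemma~\ref{F1sd} and Proposition~\ref{kerimtheta}) and $\tilde F\cap F=\{0\}$, so $(\tilde F\cap V_i)\cap(\tilde F\cap W_j)\subseteq\tilde F\cap\ker\theta=\{0\}$ by invertibility of $\theta$ on $\tilde F$. You never invoke this fact, and without it (or without first completing the dimension count) part (3) is unproved.

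The dimension count itself does close with ingredients you already have, and in fact more cheaply than you fear. Put $n_i=\dim(\tilde F\cap V_i)$, $m_j=\dim(\tilde F\cap W_j)$, $d=\dim\tilde F$. The fixed-point identities give $n_i+m_j\ge d$ for all $i,j$; the directness of $V_1\oplus V_2$ and $W_1\oplus W_2$ gives $n_1+n_2\le d$ and $m_1+m_2\le d$. Since $(n_1+m_1)+(n_2+m_2)=(n_1+n_2)+(m_1+m_2)$, all these inequalities are forced to be equalities; hence $(\tilde F\cap V_1)\oplus(\tilde F\cap V_2)$ has dimension $d$ and equals $\tilde F$, likewise for the $W_j$, and $n_i+m_j=d$ makes each sum in (3) direct. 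Note that your $\theta$-swapping fact $n_1=n_2$, $m_1=m_2$ (which is correct, via Lemma~\ref{swap} and injectivity of $\theta$ on $\tilde F$) is not even needed. The paper runs the same count in the opposite order: it proves (3) first, getting the exact equality $n_i+m_j=d$ from directness via $\tilde F\cap F(1)=\{0\}$, and then deduces (1) and (2). Your version starting from the inequality is marginally more economical, but you must actually write those three lines---as it stands the key inequality is only ``expected.''
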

\begin{proof} Let us start by the proof of point 3. We have: $V_i
\cap W_j \subset F(1)$, which gives us $(\tilde F \cap
V_i)\cap(\tilde F \cap W_j)\subset \tilde F \cap F(1)=\{0\}$. From
$\tilde F=(\tilde F \cap V_i)+(\tilde F \cap W_j)$ we deduce then
$\tilde F=(\tilde F \cap V_i)\oplus (\tilde F \cap W_j)$.

Let us note $n_i=\dim(\tilde F\cap V_i)$ and $m_j:=\dim(\tilde F\cap
W_j)$. Point $3$ implies then that $n_i+m_j=\dim \tilde F$ (*). This
gives us $n_1=n_2$ and $m_1=m_2$.

As $V_1\cap V_2=\{0\}$, $(\tilde F \cap V_1)\cap(\tilde F \cap
V_2)=\{0\}$. As $(\tilde F \cap V_1)\oplus (\tilde F \cap V_2)
\subset \tilde F$, we have: $2 n_1=n_1+n_2\le \dim \tilde F$. (**)
Similarly $(\tilde F \cap W_1)\oplus (\tilde F \cap W_2) \subset
\tilde F$ et $2 m_1=m_1+m_2\le \dim \tilde F$. (***)

From (*),(**) and (***) follows that $2 n_i=2 m_j=\dim \tilde F$ and
that $(\tilde F \cap V_1)\oplus (\tilde F \cap V_2)= \tilde F$ and
$(\tilde F \cap W_1)\oplus (\tilde F \cap W_2)= \tilde F$.
\end{proof}

\medskip

We can refine the two first points of the proposition as follows:

\begin{prop}\label{tildefnv1v2}
For every non negative integer $n$ we have:
\begin{enumerate}
\item $(\tilde F(n)\cap V_1) \oplus (\tilde F(n)\cap V_2)=\tilde F(n)$
\item $(\tilde F(n)\cap W_1) \oplus (\tilde F(n)\cap W_2)=\tilde F(n)$
\end{enumerate}
\end{prop}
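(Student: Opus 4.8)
The plan is to argue by induction on $n$, using the identity $\tilde F(n+1)=\theta(\tilde F(n))$ established in the proposition just before, together with Lemma~\ref{swap}. Both assertions (the one for the $V_i$ and the one for the $W_j$) will be proved simultaneously and by the same argument.

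For the base case $n=0$ one has $\tilde F(0)=E$, and since $(E,V_1,V_2,W_1,W_2)$ is a decomposition of $E$ into two direct sums, $E\cap V_i=V_i$ and $E\cap W_j=W_j$; so the two statements reduce to $V_1\oplus V_2=E$ and $W_1\oplus W_2=E$, which hold by definition.

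For the inductive step, assume $\tilde F(n)=(\tilde F(n)\cap V_1)\oplus(\tilde F(n)\cap V_2)$ and $\tilde F(n)=(\tilde F(n)\cap W_1)\oplus(\tilde F(n)\cap W_2)$. Applying $\theta$ to the first decomposition and using linearity gives $\tilde F(n+1)=\theta(\tilde F(n))=\theta(\tilde F(n)\cap V_1)+\theta(\tilde F(n)\cap V_2)$. By Lemma~\ref{swap}, each summand satisfies $\theta(\tilde F(n)\cap V_i)\subseteq\theta(V_i)\subseteq V_{\tau(i)}$, and it also lies in $\theta(\tilde F(n))=\tilde F(n+1)$; hence $\theta(\tilde F(n)\cap V_i)\subseteq\tilde F(n+1)\cap V_{\tau(i)}$. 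Summing over $i\in\{1,2\}$ yields $\tilde F(n+1)\subseteq(\tilde F(n+1)\cap V_1)+(\tilde F(n+1)\cap V_2)$, and the reverse inclusion is trivial, so equality holds. The sum is direct because $(\tilde F(n+1)\cap V_1)\cap(\tilde F(n+1)\cap V_2)\subseteq V_1\cap V_2=\{0\}$. Replacing $V_i$ by $W_i$, invoking the second half of Lemma~\ref{swap} and $W_1\cap W_2=\{0\}$, gives the statement for the $W_j$.

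I do not expect a real obstacle here; the essential content is already packaged in $\tilde F(n+1)=\theta(\tilde F(n))$ and in Lemma~\ref{swap}. The only point needing a little care is the bookkeeping of the transposition $\tau$: $\theta$ sends $\tilde F(n)\cap V_i$ into $V_{\tau(i)}$ rather than $V_i$, but since $\tau$ merely permutes the two summands this does not affect the conclusion. (Note also that letting $n\to\infty$ recovers points 1 and 2 of Proposition~\ref{tildefv1v2}, so the present statement is indeed a refinement.)
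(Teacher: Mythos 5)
Your proof is correct, but it takes a genuinely different route from the paper. You lean on the identity $\tilde F(n+1)=\theta(\tilde F(n))$ (which the paper derives from Proposition~\ref{kerimtheta}, i.e.\ from $\tilde F(n)=\im\theta^n$, itself proved via duality) together with Lemma~\ref{swap}: applying $\theta$ to the inductive decomposition and observing that $\theta$ carries $\tilde F(n)\cap V_i$ into $\tilde F(n+1)\cap V_{\tau(i)}$ settles the inductive step in two lines, with directness coming from $V_1\cap V_2=\{0\}$. The paper instead works directly from the recursive definition $\tilde F(n+1)=\bigcap_{i,j}((\tilde F(n)\cap V_i)+(\tilde F(n)\cap W_j))$: given $a\in\tilde F(n+1)$ written as $a=x+y$ with $x\in V_1$, $y\in V_2$, the induction hypothesis (applied to $\tilde F(n)\supset\tilde F(n+1)$) puts $x,y$ in $\tilde F(n)$, and then the four representations $a=x_{ij}+y_{ij}$ are massaged (e.g.\ $x=(x_{21}-y)+y_{21}$) to exhibit $x$ as an element of each of the four sets being intersected. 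Your argument is shorter and makes visible the structural reason the statement holds ($\theta$ intertwines the $V$-grading with the swap $\tau$), but it inherits the dependence on the duality-based proof that $\im\theta^n=\tilde F(n)$; the paper's argument is longer but entirely lattice-theoretic and self-contained from the definition, needing nothing about $\theta$. Both are valid, and there is no circularity in your use of the earlier propositions.
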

\begin{proof} We will just prove the first point, the proof of
the second point being similar.

By induction on $n$: For $n=0$ we have effectively: $\tilde
F(0)=E=V_1 \oplus V_2$. Suppose the the result true for $n$.
Evidently we have the inclusion: $(\tilde F(n+1)\cap V_1) \oplus
(\tilde F(n+1)\cap V_2) \subset \tilde F(n+1)$. Let $a\in \tilde
F(n+1)$. We can write $a=x+y$ with $x\in V_1$ and $y\in V_2$. Let us
show then $x,y\in\tilde F(n+1)$.

As $a\in \tilde F(n+1) \subset \tilde F(n)$ and $\tilde F(n)$ is
homogeneous with respect to $V_1 \oplus V_2$ we have: $x,y\in \tilde
F(n)$.

By definition of $\tilde F(n+1)$, $a$ we can write $a=x_{ij}+y_{ij}$
with $x_{ij}\in \tilde F(n) \cap V_i$ and $y_{ij}\in \tilde F(n)
\cap W_j$. We deduce that $x$ is an element of $\tilde
F(n+1)=\bigcap_{i,j} ((\tilde F(n) \cap V_i)+(\tilde F(n) \cap
W_j))$ by writing: $x=x+0=x+0=(x_{21}-y)+y_{21}=(x_{22}-y)+y_{22}$.
A similar reflection shows that $y\in \tilde F(n+1)$.\end{proof}

\medskip

We will see in the following that one can decompose canonically
$F(n)$.

Let's write $e=id_{\{1,2\}}$ and $\tau=(1 2)$ the elements of the
group ${\mathcal S}_2$ of the permutations of the set $\{1,2\}$. We
will write for $i=1,2$, $\bar i:=\tau(i)$. For $\sigma\in {\mathcal
S}_2$, we write $\bar \sigma$ the element of ${\mathcal S}_2$ such
that $\{\sigma, \bar \sigma\}={\mathcal S}_2$.

\begin{defi}
Let $F_\sigma(0)=0$ and $F_\sigma(n+1)=\sum_i ((F_\sigma(n)+V_i)\cap
(F_\sigma(n)+W_{\sigma(i)}))$.

One can see that $(F_\sigma(n))_n$ is an increasing sequence of
subvectorspaces of $E$, and so finally stationary (as $E$ is
finite-dimensional). Let's write $F_\sigma(\infty)$ or simply
$F_\sigma$ the space $\sum_n F_\sigma(n)$ {\em i.e.} the maximal
element of this sequence.
\end{defi}

Let's remark on the other hand that lemma~\ref{F1sd} implies that
$F_e(1)=(V_1\cap W_1) \oplus (V_2 \cap W_2)$, $F_\tau(1)=(V_1\cap
W_2) \oplus (V_1 \cap W_2)$ and $F(1)=F_e(1)\oplus F_\tau(1)$.

\begin{prop}\label{thetafs}
$\forall n, \theta(F_\sigma(n+1))\subset F_\sigma(n)$.
\end{prop}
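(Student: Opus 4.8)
The plan is to prove the inclusion by induction on $n$, following the same pattern that produced $\theta(F(n+1))\subset F(n)$ from Proposition~\ref{kerimtheta}, but carrying the permutation $\sigma$ through the bookkeeping.

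For the base case $n=0$, observe that $F_\sigma(1)=\sum_i (V_i\cap W_{\sigma(i)})$, and the computation in the proof of Proposition~\ref{kerimtheta} shows that $\theta$ vanishes on every $V_i\cap W_j$; in particular $\theta$ vanishes on each $V_i\cap W_{\sigma(i)}$, so by linearity $\theta(F_\sigma(1))=\{0\}=F_\sigma(0)$.

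For the inductive step, assume $\theta(F_\sigma(n))\subset F_\sigma(n-1)$ and take $x\in F_\sigma(n+1)$. Write $x=\sum_i x_i$ with $x_i\in (F_\sigma(n)+V_i)\cap(F_\sigma(n)+W_{\sigma(i)})$, and for each $i$ pick decompositions $x_i=a_i+v_i=b_i+w_i$ with $a_i,b_i\in F_\sigma(n)$, $v_i\in V_i$ and $w_i\in W_{\sigma(i)}$. Applying $\theta$ gives $\theta(x_i)=\theta(a_i)+\theta(v_i)=\theta(b_i)+\theta(w_i)$. By the induction hypothesis $\theta(a_i),\theta(b_i)\in F_\sigma(n-1)$, while Lemma~\ref{swap} gives $\theta(v_i)\in V_{\bar i}$ and $\theta(w_i)\in W_{\tau(\sigma(i))}$. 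Since ${\mathcal S}_2$ is abelian, $\tau(\sigma(i))=\sigma(\tau(i))=\sigma(\bar i)$, hence $\theta(x_i)\in (F_\sigma(n-1)+V_{\bar i})\cap(F_\sigma(n-1)+W_{\sigma(\bar i)})$. Summing over $i$ and using that $i\mapsto\bar i$ is a bijection of $\{1,2\}$, one gets $\theta(x)=\sum_i\theta(x_i)\in\sum_j (F_\sigma(n-1)+V_j)\cap(F_\sigma(n-1)+W_{\sigma(j)})=F_\sigma(n)$, which closes the induction.

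The only step requiring any care is the index arithmetic: one must notice that $\tau(\sigma(i))=\sigma(\tau(i))$, so that the term obtained for the summation index $i$ is genuinely one of the defining summands of $F_\sigma(n)$ once the summation index is relabelled $\bar i$. Apart from this minor bookkeeping, the argument only reuses the elementary manipulations already used in the proof of Proposition~\ref{kerimtheta}, so I do not expect a real obstacle.
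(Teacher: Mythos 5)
Your proof is correct and follows essentially the same route as the paper: induct on $n$, split an element of $F_\sigma(n+1)$ into its two defining summands, use both decompositions of each summand, apply the induction hypothesis to the $F_\sigma(n)$-parts and Lemma~\ref{swap} to the $V_i$- and $W_{\sigma(i)}$-parts, and observe that the index shift $i\mapsto\bar i$ is compatible with $\sigma$. The commutativity observation $\tau\circ\sigma=\sigma\circ\tau$ that you single out is exactly the point the paper's proof relies on implicitly.
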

\begin{proof} By induction: It is true for $n=0$. Suppose its
true up to order $n$. Let $x\in F_\sigma(n+1), y\in V_1, z\in
F_\sigma(n+1), t\in V_2, x'\in F_\sigma(n+1), y'\in W_1, z'\in
F_\sigma(n+1), t'\in W_2$, such that $x+y=x'+y'$ et $z+t=z'+t'$.

Let us show that $\theta(x+y+z+t)\in F_\sigma(n+1)$. Let us recall
first that $\theta(V_i)\subset V_{\tau(i)}$ and $\theta(W_j)\subset
W_{\tau(j)}$. We have consequently:
$\theta(x)+\theta(y)=\theta(x')+\theta(y')\in (F_\sigma(n)+V_2)\cap
(F_\sigma(n)+W_{\sigma(2)})$ and
$\theta(z)+\theta(t)=\theta(z')+\theta(t')\in (F_\sigma(n)+V_1)\cap
(F_\sigma(n)+W_{\sigma(1)})$. This gives us
$\theta(x+y+z+t)=\theta(x)+\theta(y)+\theta(z)+\theta(t)\in
F_\sigma(n+1)$. \end{proof}

We will need the following lemma:
\begin{lem}\label{AB}
Let $A_0, A, B_0, B$ be four vector subspaces of $E$ such that
$A_0\subset A$ et $B_0 \subset B$. We have then
$$(A+B_0)\cap(A_0+B)=A_0+B_0+(A\cap B).$$
\end{lem}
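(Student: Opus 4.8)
The identity to prove is $(A+B_0)\cap(A_0+B)=A_0+B_0+(A\cap B)$ under the hypotheses $A_0\subset A$ and $B_0\subset B$. I would establish the two inclusions separately, and the hard direction is $\subseteq$.

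The inclusion $\supseteq$ is the routine one: since $A_0\subset A+B_0$ (using $A_0\subset A$) and $A_0\subset A_0+B$, we get $A_0$ in the left-hand side; symmetrically $B_0$ lies in it; and $A\cap B\subset A+B_0$ and $A\cap B\subset A_0+B$ trivially, so $A\cap B$ is in the left-hand side as well. As the left-hand side is a subspace, it contains the sum $A_0+B_0+(A\cap B)$.

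For the inclusion $\subseteq$, take $x\in(A+B_0)\cap(A_0+B)$ and write $x=a+b_0=a_0+b$ with $a\in A$, $b_0\in B_0$, $a_0\in A_0$, $b\in B$. The key move is to isolate an element of $A\cap B$: from $a+b_0=a_0+b$ we get $a-a_0=b-b_0$. The left side lies in $A$ (since $a_0\in A_0\subset A$) and the right side lies in $B$ (since $b_0\in B_0\subset B$), so $c:=a-a_0=b-b_0\in A\cap B$. Then $x=a_0+b = a_0+(b_0+c) = a_0+b_0+c \in A_0+B_0+(A\cap B)$, which is exactly what we want. So in fact the argument is short and the "main obstacle" is merely recognizing the right substitution $c=a-a_0=b-b_0$; once that is seen, both inclusions are immediate. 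I would present it as a two-line proof, possibly even folding the easy inclusion into a remark, since the displayed computation $x=a_0+b_0+(a-a_0)$ with $a-a_0\in A\cap B$ already shows $\subseteq$, and $\supseteq$ needs no more than the observation that each of the three summands visibly lies in both $A+B_0$ and $A_0+B$.
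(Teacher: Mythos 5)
Your proof is correct and follows exactly the same route as the paper's: the easy inclusion is checked summand by summand, and for the other direction one writes $x=a+b_0=a_0+b$ and observes that $a-a_0=b-b_0\in A\cap B$, giving $x=a_0+b_0+(a-a_0)$. Nothing to add.
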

\begin{proof} The inclusion "$\supset$" is clear, as every
$A+B_0$, $A_0+B$ contains every $A_0$, $B_0$, $A\cap B$.

For the inclusion "$\subset$" let $x\in A$, $y_0\in B_0$, $x_0\in
A_0$, $y\in B$ such that $x+y_0=x_0+y$. One deduces $x-x_0=y-y_0 \in
A\cap B$. So $x+y_0=x_0+y_0+(x-x_0)\in A_0+B_0+(A\cap B).$
\end{proof}

\begin{prop}\label{Fscohomogene}
\begin{enumerate}
\item $F_\sigma(n)$ is co-homogeneous with respect to the direct sum
$V_1\oplus V_2$ or equivalently $(F_\sigma(n)+V_1)\cap
(F_\sigma(n)+V_2)=F_\sigma(n)$.

\item $F_\sigma(n)$ is co-homogeneous with respect to the direct sum
$W_1\oplus W_2$ or equivalently $(F_\sigma(n)+W_1)\cap
(F_\sigma(n)+W_2)=F_\sigma(n)$.
\end{enumerate}
\end{prop}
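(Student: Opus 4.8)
The plan is to prove both statements by a simultaneous induction on $n$, since the two points are symmetric under the exchange $V_i \leftrightarrow W_{\sigma(i)}$ (which swaps the roles of $V$ and $W$ but preserves the defining recursion for $F_\sigma$). So I would only write out point 1 in detail. The base case $n=0$ is immediate: $F_\sigma(0)=0$, and $(0+V_1)\cap(0+V_2)=V_1\cap V_2=0=F_\sigma(0)$.

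For the inductive step, assume $(F_\sigma(n)+V_1)\cap(F_\sigma(n)+V_2)=F_\sigma(n)$ and likewise for $W$. Write $G=F_\sigma(n)$ for brevity. By definition $F_\sigma(n+1)=\sum_i \bigl((G+V_i)\cap(G+W_{\sigma(i)})\bigr)$. The inclusion $F_\sigma(n+1)\subset (F_\sigma(n+1)+V_1)\cap(F_\sigma(n+1)+V_2)$ is trivial, so the content is the reverse inclusion. Here is where Lemma~\ref{AB} enters: I want to compute $\bigl(F_\sigma(n+1)+V_1\bigr)\cap\bigl(F_\sigma(n+1)+V_2\bigr)$. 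Since $F_\sigma(n+1)\supset (G+V_1)\cap(G+W_{\sigma(1)})$ and this latter space contains $G+$ (something), the idea is to write $F_\sigma(n+1)+V_j = A_0 + B_0 + \text{stuff}$ in a form where Lemma~\ref{AB} applies with $A_0 = F_\sigma(n+1)$, $B_0 = V_j$ contained in larger spaces whose intersection is controlled. Concretely I expect to show $F_\sigma(n+1)+V_1 \subseteq G + V_1$ modulo a piece, use that $(G+V_1)\cap(G+V_2)=G$ by the induction hypothesis, and squeeze $F_\sigma(n+1)$ back out.

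More precisely, I would argue: take $x\in(F_\sigma(n+1)+V_1)\cap(F_\sigma(n+1)+V_2)$, so $x = a_1 + v_1 = a_2 + v_2$ with $a_j\in F_\sigma(n+1)$, $v_j\in V_j$. Now $F_\sigma(n+1)\subseteq G + V_1 + V_2 = G + E = E$ is useless directly, but note $F_\sigma(n+1) = \bigl((G+V_1)\cap(G+W_{\sigma(1)})\bigr) + \bigl((G+V_2)\cap(G+W_{\sigma(2)})\bigr)$, and crucially each summand lies in $G+V_i$. Hence $F_\sigma(n+1)\subseteq (G+V_1)+(G+V_2) = G + V_1 + V_2$, still not sharp enough. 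The better route: decompose $a_j$ along the two summands of $F_\sigma(n+1)$ and push the $V_1$-parts and $V_2$-parts around so that $v_1 - (\text{its }V_2\text{-shadow})$ becomes an element of $G+W$-type spaces. I think the cleanest formulation is to apply Lemma~\ref{AB} with $A = G+V_1$, $A_0 = (G+V_1)\cap(G+W_{\sigma(1)})$, $B = G+V_2$, $B_0 = (G+V_2)\cap(G+W_{\sigma(2)})$, so that $A_0 \subseteq A$, $B_0 \subseteq B$, and then $(A+B_0)\cap(A_0+B) = A_0 + B_0 + (A\cap B) = F_\sigma(n+1) + \bigl((G+V_1)\cap(G+V_2)\bigr) = F_\sigma(n+1) + G = F_\sigma(n+1)$ using the induction hypothesis and $G\subseteq F_\sigma(n+1)$. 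It then remains to check that $(F_\sigma(n+1)+V_1)\cap(F_\sigma(n+1)+V_2) \subseteq (A+B_0)\cap(A_0+B)$, i.e. that $F_\sigma(n+1)+V_1 \subseteq A + B_0 = (G+V_1) + \bigl((G+V_2)\cap(G+W_{\sigma(2)})\bigr)$ and symmetrically; this holds because $F_\sigma(n+1) = A_0 + B_0 \subseteq A + B_0$ and $V_1 \subseteq G+V_1 = A$.

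The main obstacle I anticipate is bookkeeping: getting the roles of $A_0,A,B_0,B$ in Lemma~\ref{AB} lined up so that the intersection $A\cap B = (G+V_1)\cap(G+V_2)$ is exactly the place where the induction hypothesis is consumed, while simultaneously verifying the two sandwiching inclusions $F_\sigma(n+1)+V_j \subseteq$ (the relevant $A+B_0$ or $A_0+B$). One has to be careful that the $W_{\sigma(i)}$-components play no role in point 1 beyond being carried along inside the definition of $F_\sigma(n+1)$ — they are only needed to make the symmetry with point 2 work and to keep $F_\sigma(n+1)$ the "right" space. Once the correct assignment is found, the computation is a one-line application of Lemma~\ref{AB}; finding that assignment is the only real thought required, and the rest is the symmetric repetition for point 2 with $V$ and $W$ interchanged.
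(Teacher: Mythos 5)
Your proof is correct and is essentially identical to the paper's: the same induction, the same application of Lemma~\ref{AB} with exactly the assignment $A=F_\sigma(n)+V_1$, $A_0=(F_\sigma(n)+V_1)\cap(F_\sigma(n)+W_{\sigma(1)})$, $B=F_\sigma(n)+V_2$, $B_0=(F_\sigma(n)+V_2)\cap(F_\sigma(n)+W_{\sigma(2)})$, and the same sandwiching inclusions $F_\sigma(n+1)+V_j\subseteq A+B_0$ (resp.\ $A_0+B$) followed by absorbing $A\cap B=F_\sigma(n)$ via the induction hypothesis. The exploratory detour in the middle of your write-up is unnecessary, but the final formulation is exactly the argument in the paper.
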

\begin{proof} We will prove the first point, the proof for the
second being similar.

By induction: For $n=0$ its clear. Suppose the result true at the
order $n$. It is evident that
$F_\sigma(n)\subset(F_\sigma(n)+V_1)\cap(F_\sigma(n)+V_2)$.

Let's prove the other inclusion: We have:

\begin{eqnarray*}F_\sigma(n+1)+V_1 & = & \sum_i
((F_\sigma(n)+V_i)\cap(F_\sigma(n)+W_{\sigma(i)}))+V_1\\ & \subset &
\underbrace{F_\sigma(n)+V_1}_A+\underbrace{(F_\sigma(n)+V_2)\cap(F_\sigma(n)+W_{\sigma(2)})}_{B_0}.
\end{eqnarray*}
Similarly
\begin{eqnarray*}F_\sigma(n+1)+V_2 & \subset &
\underbrace{(F_\sigma(n)+V_1)\cap(F_\sigma(n)+W_{\sigma(1)})}_{A_0}+\underbrace{F_\sigma(n)+V_2}_B.
\end{eqnarray*}
By application of lemma~\ref{AB} and the induction hypothesis we
obtain:
\begin{eqnarray*}(F_\sigma(n+1)+V_1)\cap (F_\sigma(n+1)+V_2) & \subset &
F_\sigma(n+1) +(F_\sigma(n)+V_1) \cap (F_\sigma(n)+V_2)\\
 & \stackrel{\mbox{\scriptsize ind. hyp.}}{\subset} & F_\sigma(n+1) + F_\sigma(n)\\
 & \subset & F_\sigma(n+1).
\end{eqnarray*}\end{proof}

\begin{prop}\label{Fsfbs1}
$\forall n, F_\sigma(n)\cap F_{\bar \sigma}(1)=\{0\}$.
\end{prop}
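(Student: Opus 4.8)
The plan is a straightforward induction on $n$, whose only ingredient is the co-homogeneity of $F_\sigma(n)$ (Proposition~\ref{Fscohomogene}). For $n=0$ there is nothing to prove, since $F_\sigma(0)=\{0\}$; so everything happens in the step from $n$ to $n+1$.

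So assume $F_\sigma(n)\cap F_{\bar\sigma}(1)=\{0\}$ and pick $x\in F_\sigma(n+1)\cap F_{\bar\sigma}(1)$. I would exploit the two descriptions of $x$. First, since $F_{\bar\sigma}(1)=(V_1\cap W_{\bar\sigma(1)})\oplus(V_2\cap W_{\bar\sigma(2)})$, write $x=c_1+c_2$ with $c_i\in V_i\cap W_{\bar\sigma(i)}$; keep in mind that $c_1\in V_1$, $c_2\in V_2$, and $c_i\in W_{\bar\sigma(i)}$. Second, from $F_\sigma(n+1)=\sum_i\big((F_\sigma(n)+V_i)\cap(F_\sigma(n)+W_{\sigma(i)})\big)$, write $x=y_1+y_2$ with $y_i\in(F_\sigma(n)+V_i)\cap(F_\sigma(n)+W_{\sigma(i)})$.

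The key step is to show $y_1,y_2\in F_\sigma(n)$, which is exactly what makes the induction close. From $y_1+y_2=c_1+c_2$ we get $y_1-c_1=c_2-y_2$; the left-hand side lies in $F_\sigma(n)+V_1$ and the right-hand side in $F_\sigma(n)+V_2$, so by co-homogeneity with respect to $V_1\oplus V_2$ we get $y_1-c_1\in(F_\sigma(n)+V_1)\cap(F_\sigma(n)+V_2)=F_\sigma(n)$. Since $c_1\in W_{\bar\sigma(1)}$ this yields $y_1\in F_\sigma(n)+W_{\bar\sigma(1)}$; but also $y_1\in F_\sigma(n)+W_{\sigma(1)}$, and as $\{\sigma(1),\bar\sigma(1)\}=\{1,2\}$, co-homogeneity with respect to $W_1\oplus W_2$ gives $y_1\in(F_\sigma(n)+W_1)\cap(F_\sigma(n)+W_2)=F_\sigma(n)$. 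Exchanging the indices $1\leftrightarrow 2$ gives $y_2\in F_\sigma(n)$ in the same way. Hence $x=y_1+y_2\in F_\sigma(n)$, and the inductive hypothesis forces $x\in F_\sigma(n)\cap F_{\bar\sigma}(1)=\{0\}$, which closes the induction.

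The only point requiring a moment's thought is the idea of applying co-homogeneity twice: first on the $V_i$ to push $y_i$ into $F_\sigma(n)$ plus a single $W$-line, then on the $W_i$ to absorb that line into $F_\sigma(n)$. With this in hand the argument is purely lattice-theoretic — it uses neither $\theta$ nor Lemma~\ref{AB} — and the essential content is that an element of $F_{\bar\sigma}(1)$ which lies in $F_\sigma(n+1)$ already lies in $F_\sigma(n)$.
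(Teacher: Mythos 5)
Your proof is correct and follows essentially the same route as the paper's: an induction on $n$ whose inductive step uses Proposition~\ref{Fscohomogene} (co-homogeneity of $F_\sigma(n)$ with respect to both $V_1\oplus V_2$ and $W_1\oplus W_2$) to show that an element of $F_\sigma(n+1)\cap F_{\bar\sigma}(1)$ already lies in $F_\sigma(n)$. Your bookkeeping is a little more economical (you get by with two applications of co-homogeneity per component rather than the paper's three), but the underlying argument is the same.
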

\begin{proof} Let's make the proof for $\sigma=e$, the case
$\sigma=\tau$ being analogous.

By induction: It is true up to order $n=0$. Suppose it is true up to
order $n$: Let $x\in F_e(n), y\in V_1, z\in F_e(n), t\in V_2, x'\in
F_e(n), y'\in W_1, z'\in F_e(n), t'\in W_2, \gamma\in V_1\cap W_2,
\delta\in V_2\cap W_1$ such that $x+y=x'+y'$, $z+t=z'+t'$ and
$(x+y)+(z+t)=\gamma + \delta\in F_e(n+1)\cap F_\tau(1)$.

On has then $x+(y-\gamma)\in F_e(n)+V_1$,$-(z+(t-\delta))\in
F_e(n)+V_2$, et $x+(y-\gamma)=-(z+(t-\delta))$. By application of
proposition~\ref{Fscohomogene} one obtains $y-\gamma\in F_e(n)$ and
$t-\delta\in F_e(n)$. One deduces: $x+y=(x+(y-\gamma))+\gamma\in
F_e(n)+(V_1\cap W_2)$ and also $z+t=(z+(t-\delta))+\delta\in
F_e(n)+(V_2\cap W_1)$. Analogously one proves that
$x'+y'=(x'+(y'-\delta))+\delta\in F_\sigma(n)+(V_2\cap W_1)$ and
$z'+t'=(z'+(t'-\gamma))+\gamma\in F_\sigma(n)+(V_1\cap W_2)$. By a
new application of proposition~\ref{Fscohomogene} (possible by the
fact that $V_1\cap W_2\subset V_1$ and $V_2\cap W_1\subset V_2$) one
obtains that $x+y=x'+y'\in F_e(n)$ and similarly $z+t=z'+t'\in
F_e(n)$. So $(x+y)+(z+t)\in F_e(n)\cap F_\tau(1)$. By induction
hypothesis one has so $(x+y)+(z+t)=0$. \end{proof}

\begin{corr}\label{interfeft}
If $n\ge 1$ then $F_\sigma(n)\cap F(1)=F_\sigma(1)$
\end{corr}
\begin{proof} It is clear that $F_\sigma(1)\subset
F_\sigma(n)\cap F(1)$. For the other inclusion, lets remark first:
$F_\sigma(n)\cap F(1)=F_\sigma(n)\cap (F_\sigma(1)\oplus F_{\bar
\sigma}(1))$. Let $x=a+b\in F_\sigma(n)$ with $a\in F_\sigma(1)$ and
$b\in F_{\bar \sigma}(1)$. $x-a=b\in F_\sigma(n)\cap F_{\bar
\sigma}(1)=\{0\}$. So we have $x\in F_\sigma(1)$. \end{proof}

\begin{prop}
$F_e(n)\cap F_\tau(n)=\{0\}$
\end{prop}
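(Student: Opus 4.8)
The plan is to argue by induction on $n$, using $\theta$ to push an element of the intersection down both filtrations $(F_e(k))_k$ and $(F_\tau(k))_k$ simultaneously until it lands in $\ker\theta=F(1)$, where the already-known direct sum $F(1)=F_e(1)\oplus F_\tau(1)$ forces it to vanish.

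First I would dispose of the base cases. For $n=0$ there is nothing to do since $F_e(0)=F_\tau(0)=\{0\}$. For $n=1$, the remark recorded just after the definition of $F_\sigma$ states $F(1)=F_e(1)\oplus F_\tau(1)$; in particular $F_e(1)\cap F_\tau(1)=\{0\}$.

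For the inductive step, assume $F_e(n)\cap F_\tau(n)=\{0\}$ for some $n\ge 1$ and let $x\in F_e(n+1)\cap F_\tau(n+1)$. Applying Proposition~\ref{thetafs} with $\sigma=e$ gives $\theta(x)\in F_e(n)$, and with $\sigma=\tau$ gives $\theta(x)\in F_\tau(n)$; by the induction hypothesis $\theta(x)\in F_e(n)\cap F_\tau(n)=\{0\}$, so $\theta(x)=0$, i.e. $x\in\ker\theta=F(1)$ by Proposition~\ref{kerimtheta}. Hence $x\in F_e(n+1)\cap F(1)$ and $x\in F_\tau(n+1)\cap F(1)$, and since $n+1\ge 1$ Corollary~\ref{interfeft} identifies these subspaces with $F_e(1)$ and $F_\tau(1)$ respectively. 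Therefore $x\in F_e(1)\cap F_\tau(1)=\{0\}$ by the base case, which closes the induction.

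The whole difficulty of the statement has in fact been front-loaded into the earlier results: that $\theta$ lowers the $F_\sigma$-filtration (Proposition~\ref{thetafs}) and that intersecting $F_\sigma(n)$ with $\ker\theta=F(1)$ collapses it all the way down to $F_\sigma(1)$ (Corollary~\ref{interfeft}, which itself rests on Proposition~\ref{Fsfbs1}). Granted those, the present proposition is a purely formal consequence, and the only point demanding any attention is to invoke Proposition~\ref{thetafs} and Corollary~\ref{interfeft} for \emph{both} values of $\sigma$ at once; I do not expect a genuine obstacle here.
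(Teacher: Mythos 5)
Your proof is correct and follows essentially the same route as the paper's: apply $\theta$, use Proposition~\ref{thetafs} for both $\sigma=e$ and $\sigma=\tau$ together with the induction hypothesis to get $\theta(x)=0$, hence $x\in\ker\theta=F(1)$, and then conclude via Corollary~\ref{interfeft} and $F_e(1)\cap F_\tau(1)=\{0\}$. No substantive difference from the paper's argument.
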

\begin{proof} By induction: It is true for $n=0$. Suppose its
true up to order $n$. Let $x\in F_e(n+1)\cap F_\tau(n+1)$, one
deduces then $\theta(x)\in \theta(F_e(n+1)) \cap
\theta(F_\tau(n+1))\stackrel{\mbox{\scriptsize
prop.~\ref{thetafs}}}{\subset} F_e(n)\cap F_\tau(n)=\{0\}$. From
this we obtain $x\in F(1)\cap F_e(n) \cap F_\tau(n)=(F(1)\cap
F_e(n)) \cap (F(1)\cap F_\tau(n))\stackrel{\mbox{\scriptsize
corr.~\ref{interfeft}}}{=}F_e(1)\cap F_\tau(1)=\{0\}$. \end{proof}

\begin{prop}\label{feftfn}
$\forall n, F_e(n)\oplus F_\tau(n)=F(n)$
\end{prop}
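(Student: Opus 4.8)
The plan is to prove $F_e(n)\oplus F_\tau(n)=F(n)$ by induction on $n$, using the previously established facts that the two sub-sums are in direct position ($F_e(n)\cap F_\tau(n)=\{0\}$, just proven) and that each $F_\sigma(\cdot)$ behaves well with respect to $\theta$ (Proposition~\ref{thetafs}) and with respect to the decompositions $V_1\oplus V_2$ and $W_1\oplus W_2$ (Proposition~\ref{Fscohomogene} and Corollary~\ref{interfeft}). Since directness of the sum is already in hand, the whole content is the set equality $F_e(n)+F_\tau(n)=F(n)$, and only the inclusion $F(n)\subset F_e(n)+F_\tau(n)$ requires work; the reverse inclusion is immediate because $F_\sigma(n)\subset F(n)$ for each $\sigma$ (which follows by a trivial induction from the defining recursions, each summand of $F_\sigma(n+1)$ being one of the summands of $F(n+1)$).

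For the base case $n=0$ both sides are $\{0\}$, and for $n=1$ the equality $F(1)=F_e(1)\oplus F_\tau(1)$ was already recorded right after the definition of $F_\sigma$ (it is Lemma~\ref{F1sd} combined with the observation that $V_i\cap W_j$ is a summand of exactly one of $F_e(1)$, $F_\tau(1)$). For the inductive step, assume $F(n)=F_e(n)\oplus F_\tau(n)$ and take $x\in F(n+1)=\sum_{i,j}((F(n)+V_i)\cap(F(n)+W_j))$. It suffices to show each generator $x_{ij}\in(F(n)+V_i)\cap(F(n)+W_j)$ lies in $F_e(n+1)+F_\tau(n+1)$. Write $x_{ij}=a+v=b+w$ with $a,b\in F(n)$, $v\in V_i$, $w\in W_j$, and split $a=a_e+a_\tau$, $b=b_e+b_\tau$ according to the inductive decomposition. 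The idea is that the pair $(i,j)$ satisfies either $j=\sigma(i)$ for $\sigma=e$ or for $\sigma=\tau$: in the first situation $x_{ij}$ should land (modulo the $F_{\bar\sigma}(n)$-parts, which one absorbs) in a summand of $F_\sigma(n+1)=\sum_i((F_\sigma(n)+V_i)\cap(F_\sigma(n)+W_{\sigma(i)}))$, while the leftover contributions coming from $a_{\bar\sigma},b_{\bar\sigma}\in F_{\bar\sigma}(n)$ need to be shown to lie in $F_{\bar\sigma}(n+1)$. This last point is where Corollary~\ref{interfeft} and Proposition~\ref{Fscohomogene} get used: a vector of $F_{\bar\sigma}(n)$ that also lies in $F(n)+V_i$ can, after intersecting with the appropriate spaces, be rewritten inside $(F_{\bar\sigma}(n)+V_i)\cap(F_{\bar\sigma}(n)+W_{\sigma(i)})$ provided one controls that the $V_i$- and $W_j$-components stay within the smaller space; co-homogeneity of $F_{\bar\sigma}(n)$ with respect to $V_1\oplus V_2$ and $W_1\oplus W_2$ is exactly what licenses that bookkeeping.

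Concretely I would argue as follows. Fix $(i,j)$ and let $\sigma\in{\mathcal S}_2$ be the unique permutation with $\sigma(i)=j$. From $x_{ij}=a+v$ with $a=a_e+a_\tau\in F_e(n)\oplus F_\tau(n)$ and $v\in V_i$, I get $x_{ij}-a_{\bar\sigma}=a_\sigma+v\in F_\sigma(n)+V_i$; similarly from $x_{ij}=b+w$ I get $x_{ij}-b_{\bar\sigma}=b_\sigma+w\in F_\sigma(n)+W_{\sigma(i)}$. Setting these two expressions equal gives $(a_\sigma+v)-(b_\sigma+w)=b_{\bar\sigma}-a_{\bar\sigma}\in F_{\bar\sigma}(n)$, and since the left-hand side lies in $F_\sigma(n)+V_i+W_{\sigma(i)}$ while the right lies in $F_{\bar\sigma}(n)$, and $F_\sigma(n)\cap F_{\bar\sigma}(n)=\{0\}$, one can use co-homogeneity (Proposition~\ref{Fscohomogene}) together with Corollary~\ref{interfeft} — exactly as in the proof of Proposition~\ref{Fsfbs1} — to conclude that the $F_{\bar\sigma}(n)$-parts $a_{\bar\sigma}$ and $b_{\bar\sigma}$ can be adjusted so that $x_{ij}$ decomposes as an element of $(F_\sigma(n)+V_i)\cap(F_\sigma(n)+W_{\sigma(i)})\subset F_\sigma(n+1)$ plus an element of $F_{\bar\sigma}(n)\subset F_{\bar\sigma}(n+1)$. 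Summing over $(i,j)$ yields $x\in F_e(n+1)+F_\tau(n+1)$, and since the sum is direct by the preceding proposition, we are done.

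The main obstacle I anticipate is precisely this last rearrangement: showing that the "cross terms" $a_{\bar\sigma},b_{\bar\sigma}$ — which a priori only live in $F_{\bar\sigma}(n)$ and are entangled with the $V_i,W_j$ components through the equation $x_{ij}=a+v=b+w$ — can be disentangled so that what remains is genuinely a generator of $F_\sigma(n+1)$ in the correct two spaces simultaneously. This is the same mechanism that makes Proposition~\ref{Fsfbs1} work, and the proof will essentially be a careful componentwise chase mirroring that argument, invoking $F_{\bar\sigma}(n)$-co-homogeneity with respect to both $V_1\oplus V_2$ and $W_1\oplus W_2$ to move the $V_i$- and $W_{\sigma(i)}$-components in and out of $F_{\bar\sigma}(n)$, and using $F_\sigma(n)\cap F_{\bar\sigma}(n)=\{0\}$ to force the split. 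Everything else — the base cases, the reverse inclusion, and the directness — is routine given the results already proved.
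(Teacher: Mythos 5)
Your induction skeleton (base cases, the reverse inclusion $F_\sigma(n)\subset F(n)$, directness from the preceding proposition, reduction to the generators $x_{ij}$ of $F(n+1)$) is fine, but the entire mathematical content of the proposition sits in the step you yourself flag as ``the main obstacle'' and then do not carry out, and the tools you propose for it do not suffice. Your key equation is $(a_\sigma+v)-(b_\sigma+w)=b_{\bar\sigma}-a_{\bar\sigma}$, which pits an element of $F_\sigma(n)+V_i+W_j$ against an element of $F_{\bar\sigma}(n)$. This is \emph{not} the configuration of Proposition~\ref{Fsfbs1}: there the two sides of the equation lay in $F_e(n)+V_1$ and $F_e(n)+V_2$, the sums with the two halves of a direct sum decomposition of $E$, so co-homogeneity (Proposition~\ref{Fscohomogene}) identified both sides as elements of $F_e(n)$. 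Here $V_i+W_j$ is in general large (it can be all of $E$), so co-homogeneity with respect to $V_1\oplus V_2$ or $W_1\oplus W_2$, and likewise Corollary~\ref{interfeft}, give no purchase on the equation; nothing in your cited toolkit controls how $F_{\bar\sigma}(n)$ sits relative to $V_i+W_j$, which is exactly what the ``adjustment'' of $a_{\bar\sigma}$ and $b_{\bar\sigma}$ requires.

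That positional information is precisely what the paper establishes in two auxiliary lemmas placed between the statement and its proof: the containment $F_\sigma(n)\subset V_i+W_{\bar\sigma(i)}$ (so that $F_{\bar\sigma}(n)\subset V_i+W_{\sigma(i)}=V_i+W_j$ for your pair $(i,j)$), and Lemma~\ref{viwjhomogenefeft}, the homogeneity of $V_i+W_j$ with respect to the direct sum $F_e(n)\oplus F_\tau(n)$. The paper's proof then runs through $\theta$ rather than through the raw generators: for $x\in F(n+1)=\ker\theta^{n+1}$ one has $\theta(x)=\alpha+\beta\in F_e(n)\oplus F_\tau(n)$, one writes $x=\sum_{i,j}(q_j\circ p_i)(x)$, and Lemma~\ref{viwjhomogenefeft} splits $\alpha$ and $\beta$ compatibly with $V_i$ and $W_j$, after which each component visibly lands in $F_e(n+1)+F_\tau(n+1)$. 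So the gap is concrete: you need some form of $F_{\bar\sigma}(n)\subset V_i+W_j$ and the resulting homogeneity statement, neither of which follows from $F_e(n)\cap F_\tau(n)=\{0\}$, Proposition~\ref{thetafs}, Proposition~\ref{Fscohomogene} or Corollary~\ref{interfeft}; without them the disentangling step cannot be performed and the induction does not close.
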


Let's start by proving two lemma:

\begin{lem}
$\forall i,n, F_\sigma(n)\subset V_i+W_{\bar \sigma(i)}$
\end{lem}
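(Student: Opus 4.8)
The plan is to prove the claim $F_\sigma(n)\subset V_i+W_{\bar\sigma(i)}$ by induction on $n$, using the recursive definition of $F_\sigma(n+1)$ together with the base case relations already recorded just before Proposition~\ref{thetafs}, namely $F_e(1)=(V_1\cap W_1)\oplus(V_2\cap W_2)$ and $F_\tau(1)=(V_1\cap W_2)\oplus(V_2\cap W_1)$. For $n=0$ the statement reads $\{0\}\subset V_i+W_{\bar\sigma(i)}$, which is trivial; for $n=1$ one checks it directly from the explicit description of $F_\sigma(1)$ (for $\sigma=e$, $F_e(1)=(V_1\cap W_1)+(V_2\cap W_2)\subset V_i+W_i=V_i+W_{\bar e(i)}$ because $V_1\cap W_1\subset V_i$ when $i=1$ and $\subset W_i$... — more carefully, one uses that $V_1\cap W_1\subset V_1\subset V_1+W_2$ etc.).

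For the inductive step, I would assume $F_\sigma(n)\subset V_i+W_{\bar\sigma(i)}$ for both values of $i$ and expand
\[
F_\sigma(n+1)=\sum_k\bigl((F_\sigma(n)+V_k)\cap(F_\sigma(n)+W_{\sigma(k)})\bigr).
\]
Fix the target index $i$. For the summand with $k=i$ we have $F_\sigma(n)+V_i\subset (V_i+W_{\bar\sigma(i)})+V_i=V_i+W_{\bar\sigma(i)}$, using the induction hypothesis with index $i$; so that whole summand lies in $V_i+W_{\bar\sigma(i)}$. For the summand with $k=\bar i$ we instead intersect with $F_\sigma(n)+W_{\sigma(\bar i)}=F_\sigma(n)+W_{\overline{\sigma(i)}}$, and since $\overline{\sigma(i)}=\sigma(\bar i)$ we can apply the induction hypothesis in the "$W$-form": from $F_\sigma(n)\subset V_i+W_{\bar\sigma(i)}$ one has $F_\sigma(n)+W_{\bar\sigma(i)}\subset V_i+W_{\bar\sigma(i)}$, and $\sigma(\bar i)=\overline{\sigma(i)}=\bar\sigma(i)$ is exactly the $W$-index appearing, so this summand also lies in $V_i+W_{\bar\sigma(i)}$. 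Adding the two summands keeps us inside $V_i+W_{\bar\sigma(i)}$, completing the induction.

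The only delicate bookkeeping is keeping the two roles of $i$ straight: the index $k$ running in the sum versus the fixed index $i$ in the target space, and the identity $\sigma(\bar i)=\overline{\sigma(i)}=\bar\sigma(i)$ in ${\mathcal S}_2$ (which holds because $\sigma$ and the transposition $\tau$ commute and $\bar\sigma(i)$ by definition equals $\tau(\sigma(i))$ when $\sigma=e$, respectively $\sigma(i)$ adjusted — in ${\mathcal S}_2$ these all coincide). So the main obstacle is purely notational: one must verify that for each of the two summands of $F_\sigma(n+1)$ at least one of the two spaces being intersected is already contained in $V_i+W_{\bar\sigma(i)}$, which forces the intersection into that space. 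Once that is observed, no further computation is needed — the containment $(X\cap Y)\subset X$ does all the work — and the sum of the two summands stays in $V_i+W_{\bar\sigma(i)}$ by linearity.
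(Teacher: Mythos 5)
Your proof is correct and follows essentially the same route as the paper's: in the inductive step, each of the two summands defining $F_\sigma(n+1)$ is bounded by dropping the intersection to the term $F_\sigma(n)+V_i$ (for $k=i$) or $F_\sigma(n)+W_{\bar\sigma(i)}$ (for $k=\bar i$), after which the induction hypothesis absorbs $F_\sigma(n)$ into $V_i+W_{\bar\sigma(i)}$. The only blemish is the unnecessary $n=1$ aside, where you momentarily write the target as $V_i+W_i$ for $\sigma=e$ instead of $V_i+W_{\bar i}$ before correcting yourself; this does not affect the argument, since the induction from $n=0$ already covers that case.
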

\begin{proof} Let's give the proof for $\sigma=\tau$. The proof is
essentially the same in the case $\sigma=e$.

By induction on $n$: For $n=0$ we have $F_\tau(0)=\{0\} \subset
V_i+W_i$. Suppose the result true up to order $n$.
$(F_\tau(n)+V_i)\cap(F_\tau(n)+W_{\bar i})\subset F_\tau(n)+V_i$ and
$(F_\tau(n)+V_{\bar i})\cap(F_\tau(n)+W_i)\subset F_\tau(n)+W_i$. By
summation of the two inclusions one obtains $F_\tau(n+1)\subset
F_\tau(n)+V_i+F_\tau(n)+W_i$. The latter is included in $V_i+W_i$ by
induction hypothesis. \end{proof}

\begin{lem}\label{viwjhomogenefeft}
$\forall n,i,j, V_i+W_j$ is homogeneous with respect to the (direct)
sum $F_e(n)+F_\tau(n)$.
\end{lem}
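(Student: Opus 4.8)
The assertion to prove is that, for every $n$ and all $i,j\in\{1,2\}$,
\[
(V_i+W_j)\cap(F_e(n)+F_\tau(n)) = \bigl((V_i+W_j)\cap F_e(n)\bigr) + \bigl((V_i+W_j)\cap F_\tau(n)\bigr).
\]
The inclusion ``$\supseteq$'' is immediate, so all the content lies in the reverse inclusion.

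The plan is to read off the position of the two summands from the preceding lemma. Applied with $\sigma=e$ it gives $F_e(n)\subseteq V_k+W_{\bar k}$ for every $k$, i.e.\ $F_e(n)$ is contained in both ``crossed'' sums $V_1+W_2$ and $V_2+W_1$; applied with $\sigma=\tau$ it gives $F_\tau(n)\subseteq V_k+W_k$ for every $k$, i.e.\ $F_\tau(n)$ is contained in both ``aligned'' sums $V_1+W_1$ and $V_2+W_2$. The four spaces $V_i+W_j$ are exactly these two families: the crossed ones ($j=\bar i$) and the aligned ones ($j=i$).

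Concretely, I would take $x\in(V_i+W_j)\cap(F_e(n)+F_\tau(n))$ and write $x=a+b$ with $a\in F_e(n)$ and $b\in F_\tau(n)$ (this decomposition is in fact unique, since $F_e(n)\cap F_\tau(n)=\{0\}$ by the proposition above, but only its existence is needed). It then suffices to check that $a,b\in V_i+W_j$. If $j=\bar i$, then $V_i+W_j=V_i+W_{\bar i}$ already contains $a$ by the lemma, hence it also contains $b=x-a$. If $j=i$, then $V_i+W_j=V_i+W_i$ already contains $b$ by the lemma, hence it also contains $a=x-b$. In either case $a\in(V_i+W_j)\cap F_e(n)$ and $b\in(V_i+W_j)\cap F_\tau(n)$, so $x=a+b$ lies in the right-hand side, which gives ``$\subseteq$''.

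I do not expect a genuine obstacle here: once the preceding lemma is available, the only point requiring care is the bookkeeping that separates the aligned case $j=i$ (where the $F_\tau$-component of $x$ is automatically in $V_i+W_j$, and the $F_e$-component is recovered by difference) from the crossed case $j=\bar i$ (where the roles of $F_e$ and $F_\tau$ are interchanged). In particular, and in contrast with the neighbouring propositions, this lemma needs no fresh induction on $n$: it is a direct consequence of the inclusions $F_\sigma(n)\subseteq V_i+W_{\bar\sigma(i)}$.
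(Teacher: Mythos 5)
Your proof is correct and is essentially the paper's own argument: in both, one writes $x=a+b$ with $a\in F_e(n)$, $b\in F_\tau(n)$, notes that one of the two summands already lies in $V_i+W_j$ by the preceding lemma ($F_\tau(n)\subset V_i+W_i$ in the aligned case, $F_e(n)\subset V_i+W_{\bar i}$ in the crossed case), and recovers the other by subtraction. The paper only writes out the case $i=j=1$ and declares the rest similar, whereas you make the aligned/crossed bookkeeping explicit; there is no substantive difference.
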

\begin{proof} Let's make the proof for $i=j=1$, the proof being
similar in the other cases.

The inclusion $(V_1+W_1)\cap F_e(n)+(V_1+W_1)\cap F_\tau(n) \subset
(V_1+W_1) \cap (F_e(n)+F_\tau(n))$ being trivial, let us show the
other inclusion: Let $\alpha\in F_e(n)$, $\beta\in F_\tau(n)$ with
$\alpha+\beta\in V_1+W_1$. By the inclusion $F_\tau(n)\subset
V_1+W_1$ obtained by the preceding lemma one has: $\beta\in
F_\tau(n)\cap (V_1+W_1)$. As $\alpha+\beta\in V_1+W_1$ and $\beta\in
V_1+W_1$ one has $\alpha=(\alpha+\beta)-\beta\in F_e(n)\cap
(V_1+W_1)$.\end{proof}

\begin{proof}{\em proposition~\ref{feftfn}: } By induction on $n$.
For $n=0$ it is evident. Suppose the result proved up to order $n$.

Let us recall that $F(n+1)=\ker \theta^{n+1}$. Let $x\in F(n+1)$. By
induction hypothesis there exists $\alpha \in F_e(n)$, $\beta\in
F_\tau(n)$ such that $\theta(x)=\alpha+\beta$. Set $v_{ij}=(p_i\circ
q_j)(x)$ and $w_{ij}=(q_j\circ p_i)(x)$. Let us remark that
$v_{ij}\in V_i$ and $w_{ij}\in W_j$. Recall that
$w_{ij}=\theta(x)+v_{ij}=\alpha+\beta+v_{ij}$. So one has more
precisely $w_{ij}\in (F_e(n)+F_\tau(n)+V_i)\cap W_j$. As in the
proof of proposition~\ref{kerimtheta} let us remark that
$x=\sum_{i,j} w_{ij}$. If one proves that $w_{ij}\in F_e(n+1) +
F_\tau(n+1)$ the proposition is proved.

As $\alpha+\beta=-v_{ij}+w_{ij}\in (F_e(n)\oplus F_\tau(n))\cap
(V_i+W_j)$ one can apply lemma~\ref{viwjhomogenefeft} in order to
obtain that $\alpha_{ij}\in V_i, \alpha'_{ij}\in W_j$ such that
$\alpha=\alpha_{ij}+\alpha'_{ij}$ and $\beta_{ij}\in V_i,
\beta'_{ij}\in W_j$ such that $\beta=\beta_{ij}+\beta'_{ij}$. One
has: $\alpha'_{ij}=\alpha-\alpha_{ij}\in (F_e(n)+V_i)\cap W_j\subset
F_e(n+1)$ and $\beta'_{ij}=\beta-\beta_{ij}\in (F_\tau(n)+V_i)\cap
W_j\subset F_\tau(n+1)$. On the other hand $W_j\ni
w_{ij}-\alpha'_{ij}-\beta'_{ij}=\alpha_{ij}+(v_{ij}+\beta_{ij})\in
F_e(n)+V_i$ and so $w_{ij}-\alpha'_{ij}-\beta'_{ij}\in
(F_e(n)+V_i)\cap W_j\subset F_e(n+1)$. Finally
$w_{ij}=(w_{ij}-\alpha'_{ij}-\beta'_{ij})+\alpha'_{ij}+\beta'_{ij}\in
F_e(n+1)+F_\tau(n+1)$, and so $x=\sum_{i,j}w_{ij}\in
F_e(n+1)+F_\tau(n+1)$. \end{proof}

\begin{prop}\label{fsv1v2}
\begin{enumerate}
\item $F_\sigma(n)$ is homogeneous with respect to the sum $V_1\oplus V_2$,
equivalently $F_\sigma(n)=(F_\sigma(n)\cap V_1)\oplus
(F_\sigma(n)\cap V_2)$.
\item $F_\sigma(n)$ is homogeneous with respect to the sum $W_1\oplus W_2$, equivalently
$F_\sigma(n)=(F_\sigma(n)\cap W_1)\oplus (F_\sigma(n)\cap W_2)$
\end{enumerate}
\end{prop}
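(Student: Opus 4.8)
The plan is to prove Proposition~\ref{fsv1v2} by induction on $n$, treating only the first point (homogeneity of $F_\sigma(n)$ with respect to $V_1\oplus V_2$); the second point follows by the symmetric argument exchanging the roles of the $V_i$ and the $W_j$. The base case $n=0$ is trivial since $F_\sigma(0)=\{0\}$. Assuming $F_\sigma(n)=(F_\sigma(n)\cap V_1)\oplus(F_\sigma(n)\cap V_2)$, I want to show that any $a\in F_\sigma(n+1)$, written $a=x+y$ with $x\in V_1$ and $y\in V_2$, actually has $x,y\in F_\sigma(n+1)$ (the reverse inclusion $(F_\sigma(n+1)\cap V_1)\oplus(F_\sigma(n+1)\cap V_2)\subset F_\sigma(n+1)$ being obvious, and the sum being direct because $V_1\cap V_2=\{0\}$).

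First I would use the definition $F_\sigma(n+1)=\sum_i\bigl((F_\sigma(n)+V_i)\cap(F_\sigma(n)+W_{\sigma(i)})\bigr)$ to write $a=a_1+a_2$ with $a_i\in(F_\sigma(n)+V_i)\cap(F_\sigma(n)+W_{\sigma(i)})$. The key observation is that $a_1\in F_\sigma(n)+V_1$ is already ``along $V_1$ modulo $F_\sigma(n)$'' and $a_2\in F_\sigma(n)+V_2$ is ``along $V_2$ modulo $F_\sigma(n)$'', so I would like to say $x\equiv a_1$ and $y\equiv a_2$ modulo $F_\sigma(n)$. Concretely, write $a_1=f_1+v_1$ with $f_1\in F_\sigma(n)$, $v_1\in V_1$, and $a_2=f_2+v_2$ with $f_2\in F_\sigma(n)$, $v_2\in V_2$. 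Then $x+y=a=(f_1+f_2)+(v_1+v_2)$, and since $f_1+f_2\in F_\sigma(n)$ which is homogeneous with respect to $V_1\oplus V_2$ by the induction hypothesis, we may split $f_1+f_2=g_1+g_2$ with $g_i\in F_\sigma(n)\cap V_i$; comparing $V_1$- and $V_2$-components gives $x=g_1+v_1$ and $y=g_2+v_2$.

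It then remains to check $x=g_1+v_1\in F_\sigma(n+1)$ and symmetrically for $y$. Since $g_1\in F_\sigma(n)\subset F_\sigma(n+1)$, it suffices to show $v_1\in F_\sigma(n+1)$; but $v_1=a_1-f_1\in F_\sigma(n)+V_1$ is clear, and one needs the second coordinate direction to pin it down. Here I expect to invoke that $a_1\in F_\sigma(n)+W_{\sigma(1)}$ as well, so $v_1=a_1-f_1\in F_\sigma(n)+W_{\sigma(1)}$, whence $v_1\in(F_\sigma(n)+V_1)\cap(F_\sigma(n)+W_{\sigma(1)})\subset F_\sigma(n+1)$; and then $x=g_1+v_1\in F_\sigma(n+1)$, and likewise $y=g_2+v_2$ with $v_2=a_2-f_2\in(F_\sigma(n)+V_2)\cap(F_\sigma(n)+W_{\sigma(2)})\subset F_\sigma(n+1)$.

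The main obstacle, and the point to handle carefully, is the bookkeeping in the step where $a=a_1+a_2$ is re-expressed: one must make sure the decomposition $f_1+f_2=g_1+g_2$ really is compatible with the decompositions $a_i=f_i+v_i$ so that the $V_1$- and $V_2$-components of $a$ match up as claimed — this is exactly where the induction hypothesis (homogeneity of $F_\sigma(n)$) is essential and where a sign or index slip would break the argument. Everything else is routine linear algebra using $V_1\oplus V_2=E$ and the definition of $F_\sigma$. I would close by remarking that replacing $V_i$ by $W_i$ and $W_{\sigma(i)}$ by $V_{\sigma^{-1}(i)}$ throughout yields point~2.
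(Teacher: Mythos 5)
Your argument is correct, but it takes a genuinely different route from the paper. You run a self-contained induction on $n$ directly from the definition $F_\sigma(n+1)=\sum_i\bigl((F_\sigma(n)+V_i)\cap(F_\sigma(n)+W_{\sigma(i)})\bigr)$: splitting $a=a_1+a_2$, peeling off $f_i\in F_\sigma(n)$, using the induction hypothesis to redistribute $f_1+f_2$ along $V_1\oplus V_2$, and observing that each $v_i$ lands in the summand $(F_\sigma(n)+V_i)\cap(F_\sigma(n)+W_{\sigma(i)})$ of $F_\sigma(n+1)$; every step checks out (including the use of $F_\sigma(n)\subset F_\sigma(n+1)$ and uniqueness of the $V_1\oplus V_2$ decomposition), and the re-indexing you sketch for point~2 is legitimate. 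The paper instead gives a two-line, non-inductive argument: for $x=y+z\in F_\sigma(n)$ with $y\in V_1$, $z\in V_2$, it writes $y=x-z\in V_1\cap(F_\sigma(n)+V_2)\subset(F_\sigma(n)+V_1)\cap(F_\sigma(n)+V_2)$ and invokes the co-homogeneity already established in Proposition~\ref{Fscohomogene} to conclude $y\in F_\sigma(n)$. The trade-off: the paper's proof is shorter and exhibits homogeneity as an immediate consequence of co-homogeneity, but it rests on Proposition~\ref{Fscohomogene} (and hence on Lemma~\ref{AB}); yours is longer but independent of that machinery, needing only the definition of $F_\sigma$ and the directness of $V_1\oplus V_2$. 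Both are valid; if you keep yours, you could note explicitly that it gives an alternative proof not relying on co-homogeneity.
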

\begin{proof} Let us prove the first point. The proof of the
second is similar. Let $x\in F_\sigma(n)$, $x=y+z$ with $y\in V_1$
and $z\in V_2$. We have then $y=x-z\in V_1 \cap (F_\sigma(n) +
V_2)\subset (F_\sigma(n) + V_1) \cap (F_\sigma(n) +
V_2)\stackrel{Prop.~\ref{Fscohomogene}}{=}F_\sigma(n)$. From this
$y\in F_\sigma(n)\cap V_1$. In the same way $z\in F_\sigma(n)\cap
V_2$. As a conclusion $F_\sigma(n)=(F_\sigma(n)\cap V_1)\oplus
(F_\sigma(n)\cap V_2)$. \end{proof}

\begin{prop}\label{fsvw}
$\forall n, (F_\sigma(n)\cap V_i)\oplus(F_\sigma(n)\cap W_{\bar
\sigma(i)})=F_\sigma(n)$
\end{prop}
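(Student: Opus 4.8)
I would fix $i\in\{1,2\}$ and $\sigma$, and prove the two halves of the assertion separately. For \emph{directness}, note that $(F_\sigma(n)\cap V_i)\cap(F_\sigma(n)\cap W_{\bar\sigma(i)})$ is contained in $F_\sigma(n)\cap(V_i\cap W_{\bar\sigma(i)})$, and that $V_i\cap W_{\bar\sigma(i)}$ is one of the two direct summands of $F_{\bar\sigma}(1)=(V_1\cap W_{\bar\sigma(1)})\oplus(V_2\cap W_{\bar\sigma(2)})$; hence that intersection lies in $F_\sigma(n)\cap F_{\bar\sigma}(1)$, which is $\{0\}$ by Proposition~\ref{Fsfbs1}.

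For the \emph{equality}, since $\supseteq$ is trivial it suffices to prove $F_\sigma(n)\subseteq(F_\sigma(n)\cap V_i)+(F_\sigma(n)\cap W_{\bar\sigma(i)})$, and I would do this by induction on $n$, the case $n=0$ being trivial. For the step, write $G=F_\sigma(n)$ and recall $F_\sigma(n+1)=\sum_k((G+V_k)\cap(G+W_{\sigma(k)}))$; it is enough to place each of the two summands inside $(F_\sigma(n+1)\cap V_i)+(F_\sigma(n+1)\cap W_{\bar\sigma(i)})$. For $k=i$, an element $z$ of that summand can be written $z=g+u$ with $g\in G$ and $u\in V_i$; writing $g=g'+g''$ with $g'\in G\cap V_i$ and $g''\in G\cap W_{\bar\sigma(i)}$ by the induction hypothesis gives $z=(g'+u)+g''$, where $g'+u\in V_i$ and $g'+u=z-g''\in F_\sigma(n+1)$ (since $G\subseteq F_\sigma(n+1)$), so $g'+u\in F_\sigma(n+1)\cap V_i$, while $g''\in G\cap W_{\bar\sigma(i)}\subseteq F_\sigma(n+1)\cap W_{\bar\sigma(i)}$. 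For $k=\bar i$ one uses the identity $W_{\sigma(\bar i)}=W_{\bar\sigma(i)}$, valid because $\{\sigma,\bar\sigma\}={\mathcal S}_2$ forces $\sigma\circ\tau=\bar\sigma$ and hence $\sigma(\bar i)=\bar\sigma(i)$: an element $z$ of that summand is then $z=g+w$ with $g\in G$ and $w\in W_{\bar\sigma(i)}$, and decomposing $g=g'+g''$ as before yields $z=g'+(g''+w)$ with $g'\in F_\sigma(n+1)\cap V_i$ and $g''+w=z-g'\in F_\sigma(n+1)\cap W_{\bar\sigma(i)}$.

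The only delicate point is this index bookkeeping: the factor $W_{\sigma(k)}$ hard-wired into the definition of $F_\sigma(n+1)$ must coincide with the target $W_{\bar\sigma(i)}$ exactly when $k=\bar i$, which is precisely what $\sigma(\bar i)=\bar\sigma(i)$ supplies; everything else is routine direct-sum manipulation together with $F_\sigma(n)\subseteq F_\sigma(n+1)$ and $F_\sigma(n)\cap F_{\bar\sigma}(1)=\{0\}$. A variant avoiding the induction: keep the directness argument, and instead prove $\dim(F_\sigma(n)\cap V_i)=\dim(F_\sigma(n)\cap W_{\sigma(i)})$ by noting that the projection onto $W_{\sigma(i)}$ along $W_{\bar\sigma(i)}$ and the projection onto $V_i$ along $V_{\bar i}$ restrict to injections $F_\sigma(n)\cap V_i\to F_\sigma(n)\cap W_{\sigma(i)}$ and $F_\sigma(n)\cap W_{\sigma(i)}\to F_\sigma(n)\cap V_i$ — their kernels lying in $F_\sigma(n)\cap F_{\bar\sigma}(1)=\{0\}$ and their images in $F_\sigma(n)$ by Proposition~\ref{fsv1v2} — and then concluding by a dimension count with Proposition~\ref{fsv1v2}(2).
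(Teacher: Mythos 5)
Your proof is correct, and its second half takes a genuinely different route from the paper. For the directness you argue exactly as the paper does: $V_i\cap W_{\bar\sigma(i)}$ sits inside $F_{\bar\sigma}(1)$, so the intersection dies by Proposition~\ref{Fsfbs1}. For the spanning statement, however, the paper does not induct at all: it sets $n_i=\dim(F_\sigma(n)\cap V_i)$, $m_j=\dim(F_\sigma(n)\cap W_j)$, gets $n_i+m_{\bar\sigma(i)}\le\dim F_\sigma(n)$ from the directness, and then invokes Proposition~\ref{fsv1v2} ($n_1+n_2=m_1+m_2=\dim F_\sigma(n)$) to force both inequalities to be equalities by summing over $i$. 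Your induction on $n$ instead decomposes each generator $(G+V_k)\cap(G+W_{\sigma(k)})$ of $F_\sigma(n+1)$ by hand, using only $G\subseteq F_\sigma(n+1)$ and the identity $\sigma(\bar i)=\bar\sigma(i)$, which you verify correctly. What your argument buys is independence from Proposition~\ref{fsv1v2} and from any dimension count (so in particular from finite-dimensionality) for the spanning half; what the paper's argument buys is brevity, since Propositions~\ref{Fsfbs1} and~\ref{fsv1v2} are already in hand at that point. Your closing ``variant'' is essentially the paper's dimension count, except that you establish the individual equalities $n_i=m_{\sigma(i)}$ via mutually inverse-in-spirit injections rather than summing the two inequalities; both versions are fine, and the injections you exhibit do land where you claim thanks to Proposition~\ref{fsv1v2} and have trivial kernel thanks to Proposition~\ref{Fsfbs1}.
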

\begin{proof} We have from proposition~\ref{Fsfbs1}
$(F_\sigma(n)\cap V_i)\cap(F_\sigma(n)\cap W_{\bar
\sigma(i)})=\{0\}$. Let us write $n_i:=\dim F_\sigma(n)\cap V_i$ and
$m_j :=\dim F_\sigma(n)\cap W_j$. We have from the preceding remark
that $n_i+m_{\bar \sigma(i)}\le \dim F_\sigma(n)$ (*). From
proposition~\ref{fsv1v2} one has $n_1+n_2=\dim F_\sigma(n)$ and
$m_1+m_2=\dim F_\sigma(n)$. By summing the two equalities it is
necessary that (*) is an equality and so $(F_\sigma(n)\cap
V_i)\oplus(F_\sigma(n)\cap W_{\bar \sigma(i)})=F_\sigma(n)$.
\end{proof}

\begin{prop}\label{interplus}
If $A$, $B$ vector subspaces of $E$ are homogeneous with respect to
the sum $\oplus_{i\in I} F_i=E$ then $A+B$ and $A\cap B$ are
homogeneous with respect to the sum $\oplus_{i\in I} F_i$.
\end{prop}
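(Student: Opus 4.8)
The plan is to reduce the general statement to the two-subspace case and then handle that by a direct argument on homogeneity. Write $E=\bigoplus_{i\in I} F_i$ and let $r_i\colon E\to F_i$ be the associated projections. For a subspace $X$, homogeneity with respect to this sum means exactly $X=\bigoplus_i (X\cap F_i)$, equivalently $X=\sum_i r_i(X)$ with $r_i(X)\subseteq X$ for all $i$; so the key reformulation I would record first is: \emph{$X$ is homogeneous iff $r_i(X)\subseteq X$ for every $i\in I$}. (Indeed, if each $r_i(X)\subseteq X$ then $r_i(X)\subseteq X\cap F_i$, and since every $x\in X$ is $\sum_i r_i(x)$ we get $X\subseteq\bigoplus_i(X\cap F_i)\subseteq X$; the converse is immediate.)

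Granting this reformulation, the proposition becomes almost trivial: if $r_i(A)\subseteq A$ and $r_i(B)\subseteq B$ for all $i$, then $r_i(A+B)=r_i(A)+r_i(B)\subseteq A+B$, so $A+B$ is homogeneous. For the intersection, $r_i(A\cap B)\subseteq r_i(A)\cap r_i(B)\subseteq A\cap B$, so $A\cap B$ is homogeneous. That closes the proof. One subtlety I would flag and dispatch: the inclusion $r_i(A\cap B)\subseteq r_i(A)\cap r_i(B)$ uses only that $r_i$ is linear (it is the image of a subset of $A$ and also of a subset of $B$), and the further inclusion $r_i(A)\cap r_i(B)\subseteq A\cap B$ uses the hypothesis $r_i(A)\subseteq A$, $r_i(B)\subseteq B$; neither step needs $r_i$ to commute with $\cap$ in general, which it does not. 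So the argument is clean precisely because we phrased homogeneity via the projections.

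The only real work, and the place I would be most careful, is establishing the projection characterization of homogeneity when $|I|>2$: the definition in the paper is stated for a sum of \emph{two} subspaces $E_1+E_2$, so I would first either extend the definition or note that ``$X$ homogeneous with respect to $\bigoplus_{i\in I}F_i$'' should be read as $X\cap(\bigoplus_i F_i)=\bigoplus_i(X\cap F_i)$, which since the ambient sum is all of $E$ reads $X=\bigoplus_i(X\cap F_i)$. With that reading the equivalence with $\forall i,\ r_i(X)\subseteq X$ is the lemma above, proved in one line in each direction. An alternative, if one prefers to stay strictly within the binary definition, is to induct on $|I|$ using $E=F_1\oplus(\bigoplus_{i\ge 2}F_i)$ and the binary case twice; but the projection formulation is shorter and makes the stability of $A+B$ and $A\cap B$ transparent, so that is the route I would take.
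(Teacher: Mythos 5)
Your proof is correct and takes essentially the same route as the paper: the paper argues directly with the components of $x=\sum_i x_i$ (regrouping terms for $A+B$, and invoking uniqueness of the decomposition with respect to $\bigoplus_i F_i$ for $A\cap B$), which is exactly the content of your characterization $r_i(X)\subseteq X$ of homogeneity. Your projection reformulation is a clean equivalent packaging of the same argument rather than a genuinely different one.
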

\begin{proof} "$A+B$": Equivalently one has: $\bigoplus_i
(F_i\cap (A+B)) \subset A+B$. Let us show the other inclusion. Let
$x\in A+B=(\bigoplus_i (F_i\cap A))+(\bigoplus_i (F_i\cap B))$. So
one has $x=\sum_i x_i+\sum_i x'_i$ with $x_i\in F_i\cap A$ and$
x'_i\in F_i\cap B$. By writing $x=\sum_i (x_i+x'_i)$ one sees that
$x\in \bigoplus_i (F_i\cap (A+B))$.

"$A\cap B$": Evidently one has: $\bigoplus_i (F_i\cap (A\cap B))
\subset A\cap B$. For the other inclusion let $x\in A\cap
B=(\bigoplus_i (F_i\cap A))\cap (\bigoplus_i (F_i\cap B))$,
$x=\sum_i x_i=\sum_i x'_i$ with $x_i\in F_i\cap A$ and $x'_i\in
F_i\cap B$. By unicity of the decomposition of $x$ with respect to
the direct sum $\bigoplus_i F_i$ it is clear that $\forall i,
x_i=x'_i$ and so that $x\in \bigoplus_i (F_i\cap (A\cap B))$.
\end{proof}


\begin{prop}\label{treillishomogene}
For every element $V$ of the lattice generated by $V_1$, $V_2$,
$W_1$ and $W_2$ one has: $V=(V\cap F_e)\oplus (V\cap F_\tau) \oplus
(V\cap \tilde F)$
\end{prop}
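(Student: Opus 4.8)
The plan is to reduce the statement to the four generators $V_1,V_2,W_1,W_2$ and then to transpose homogeneity between the two types of direct sum decomposition.

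First I would record the decomposition $E = F_e \oplus F_\tau \oplus \tilde F$: this follows from Proposition~\ref{feftfn} in the stationary limit (which gives $F = F_e \oplus F_\tau$) together with the already-noted splitting $E = F \oplus \tilde F$ coming from the endomorphism $\theta$. By Proposition~\ref{interplus}, the class of subspaces of $E$ that are homogeneous with respect to the sum $F_e \oplus F_\tau \oplus \tilde F$ is closed under both lattice operations, sum and intersection. Hence it suffices to prove that each of $V_1, V_2, W_1, W_2$ is homogeneous with respect to $F_e \oplus F_\tau \oplus \tilde F$; the assertion for an arbitrary element $V$ of the lattice then follows automatically, giving $V=(V\cap F_e)\oplus(V\cap F_\tau)\oplus(V\cap\tilde F)$.

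Second, I would isolate the following elementary ``transpose'' fact: if $E = \bigoplus_k C_k$ and $E = A \oplus B$ are two direct sum decompositions and each $C_k$ is homogeneous with respect to $A \oplus B$ (i.e.\ $C_k = (C_k \cap A) \oplus (C_k \cap B)$), then $A$ and $B$ are homogeneous with respect to $\bigoplus_k C_k$. The proof is short: set $A' = \bigoplus_k (C_k \cap A)$ and $B' = \bigoplus_k (C_k \cap B)$, which are direct as sub-sums of $\bigoplus_k C_k$; then $A' + B' = \sum_k C_k = E$ and $A' \cap B' \subset A \cap B = \{0\}$, so $E = A' \oplus B'$ with $A' \subset A$ and $B' \subset B$, which forces $A' = A$ and $B' = B$ (if $a\in A$ and $a=a'+b'$ with $a'\in A'\subset A$, $b'\in B'\subset B$, then $b'=a-a'\in A\cap B=\{0\}$).

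Third, I would feed in the inputs. Propositions~\ref{fsv1v2} and~\ref{tildefnv1v2}, applied to $F_\sigma(n)$ and $\tilde F(n)$ for $n$ large enough that the sequences have stabilized, say exactly that $F_e$, $F_\tau$ and $\tilde F$ are each homogeneous with respect to $V_1 \oplus V_2$ (point 1) and with respect to $W_1 \oplus W_2$ (point 2). Applying the transpose fact with $\{C_k\} = \{F_e, F_\tau, \tilde F\}$ and $(A,B) = (V_1, V_2)$ gives that $V_1$ and $V_2$ are homogeneous with respect to $F_e \oplus F_\tau \oplus \tilde F$; applying it with $(A,B) = (W_1, W_2)$ does the same for $W_1$ and $W_2$. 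Then Proposition~\ref{interplus} closes the argument. There is no genuinely hard step: the real content is already packaged into Propositions~\ref{feftfn}, \ref{fsv1v2}, \ref{tildefnv1v2} and~\ref{interplus}. The only point requiring care is the transpose fact, where one must not conflate ``$C_k$ homogeneous with respect to $A\oplus B$'' with ``$A$ homogeneous with respect to $\bigoplus_k C_k$''; bridging the two is precisely the little complementary-subspaces computation above, and it is the crux of the proposition.
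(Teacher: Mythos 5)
Your proof is correct and follows essentially the same route as the paper: reduce via Proposition~\ref{interplus} to showing that the four generators are homogeneous with respect to $F_e\oplus F_\tau\oplus\tilde F$, then transpose the homogeneity statements of Propositions~\ref{fsv1v2} and~\ref{tildefv1v2} by an elementary lemma about two direct sum decompositions. Your ``transpose fact'' is exactly the paper's auxiliary lemma, proved by a complementary-subspace argument instead of the paper's inclusion-squeeze, which is an immaterial difference.
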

\begin{proof} Due to proposition~\ref{interplus} it is enough to
prove that $V_1$, $V_2$, $W_1$ and $W_2$ are homogeneous with
respect to the sum: $E=F_e\oplus F_\tau\oplus \tilde F$.

Let us prove for this purpose the lemma:

\begin{lem} Let $E$, $E_j$ et $F_i$ be vector spaces.
If $E=E_1\oplus E_2$ with $\forall i, F_i=(F_i\cap
E_1)\oplus(F_i\cap E_2)$, then $E_j\cap \oplus_i F_i=\oplus_i
(E_j\cap F_i)$ for $j=1,2$.
\end{lem}
\begin{proof} $\oplus_i F_i =\oplus_i(F_i \cap (E_1 \oplus E_2))
=\oplus_i((F_i \cap E_1) \oplus (F_i \cap E_2))=\oplus_i (F_i \cap
E_1)\oplus \oplus_i (F_i \cap E_2)$. But $\oplus_i (F_i \cap E_j)
\subset(\oplus_i F_i) \cap E_j$. As
 $((\oplus_i F_i) \cap E_1) \oplus ((\oplus_i F_i) \cap E_2) \subset \oplus F_i$,
 the inclusions in this proof are necessarily equalities.
So $\oplus_i (F_i\cap E_j)=(\oplus_i F_i)\cap E_j$. \end{proof}
\medskip

{\em end of proof of proposition~\ref{treillishomogene}:} By
applying the lemma for $\forall i, E_i=V_i$ (respectively $\forall
i, E_i=W_i$) proposition~\ref{fsv1v2} and
proposition~\ref{tildefv1v2} show that $V_1$, $V_2$, $W_1$ and $W_2$
are homogeneous with respect to the sum decomposition: $E=F_e\oplus
F_\tau\oplus \tilde F$. \end{proof}

\subsection{Reflexive case}\label{orth}\index{forme!réflexive}

Suppose that $E$, $V_1$, $V_2$ are finite-dimensional vector-spaces
such that $E=V_1 \oplus V_2$ and suppose that $E$ carries a non
degenerate reflexive form $a$. We have seen that $(E,V_1, V_2,
V_1^\perp, V_2^\perp)$ is a decomposition of $E$ into two direct
sums. Suppose $F(n)$, $F$, $F_\sigma(n)$, $F_\sigma$, $\tilde F(n)$
and $\tilde F$ defiend as before.

Let's prove the following proposition:
\begin{prop}\label{ffforth}
$$F=F_e \oplus^\perp F_\tau \oplus^\perp \tilde F$$
\end{prop}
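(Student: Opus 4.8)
The statement to prove is that $F = F_e \oplus^\perp F_\tau \oplus^\perp \tilde F$, so beyond the already-established direct-sum decomposition $E = F_e \oplus F_\tau \oplus \tilde F$ (hence $F = F_e\oplus F_\tau$) I must show the three summands are pairwise orthogonal with respect to the reflexive form $a$. The key structural fact I would exploit is that in the reflexive setting $W_i = V_i^\perp$, so that for a vector $v \in V_i$ and $w \in V_j$ the pairing $a(v,w)$ vanishes as soon as $w \in V_i^\perp = W_i$, i.e.\ whenever $v,w$ land in "matching" $V$--$W$ slots. I also want to use that $\theta$ behaves well with respect to $a$: since $p_{W_i}^{W_j} = p_{V_i^\perp}^{V_j^\perp}$ is, up to the usual identifications, the adjoint of $p_{V_j}^{V_i}$ with respect to $a$, one gets that $\theta$ is (anti-)self-adjoint for $a$, i.e.\ $a(\theta x, y) = -a(x,\theta y)$ (this is the concrete incarnation of the earlier lemma $\theta_{{\mathcal V}^*} = (\theta_{\mathcal V})^*$ once $E$ is identified with $E^*$ via $a$).

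First I would record the adjointness relation for $\theta$ carefully: write $\theta = q_1 p_1 - p_1 q_1$ (using the paper's $p_i = p_{V_i}^{V_{\bar i}}$, $q_i = p_{W_i}^{W_{\bar i}}$, with $W_i = V_i^\perp$), and check that $a(q_i x, y) = a(x, p_i y)$ for all $x,y$ — because $q_i$ projects onto $V_i^\perp$ along $V_{\bar i}^\perp$, which is exactly the $a$-adjoint of the projection onto $V_{\bar i}$ along $V_i$; being slightly careful with indices this yields $a(\theta x, y) = -a(x, \theta y)$. Granting this, $\theta$-invertibility on $\tilde F$ and $\theta$-nilpotence on $F$ force $F \perp \tilde F$: for $x \in F$ with $\theta^k x = 0$ and $y \in \tilde F$, write $y = \theta^k y'$ with $y' \in \tilde F$ (possible since $\theta|_{\tilde F}$ is invertible), so $a(x,y) = a(x, \theta^k y') = \pm a(\theta^k x, y') = 0$. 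That disposes of two of the three orthogonality relations at once.

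The remaining and main point is $F_e \perp F_\tau$. Here I would argue by induction on $n$ that $F_e(n) \perp F_\tau(m)$ for all $n,m$, using the descriptions from Proposition~\ref{fsvw}: $F_\sigma(n) = (F_\sigma(n)\cap V_i) \oplus (F_\sigma(n)\cap W_{\bar\sigma(i)})$, and the homogeneity of $F_\sigma(n)$ with respect to $V_1\oplus V_2$ and $W_1\oplus W_2$ (Proposition~\ref{fsv1v2}). The base case $n = 0$ or $m=0$ is trivial. For the inductive step, take $x \in F_e(n+1)$, $y \in F_\tau(n+1)$; decompose each into its $V_1, V_2$ components (or use $\theta x \in F_e(n)$, $\theta y \in F_\tau(n)$ from Proposition~\ref{thetafs} together with the adjointness of $\theta$) so that the pairing $a(x,y)$ reduces to pairings of components lying in $V_i$ versus $W_j$ with indices forced to match (because $F_e$ uses slots $V_i,W_i$ while $F_\tau$ uses $V_i, W_{\bar i}$), each of which vanishes since $W_i = V_i^\perp$; the residual terms are pairings among deeper-level spaces $F_e(n), F_\tau(m)$ handled by the induction hypothesis. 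Concretely: using $a(x,y)$, write $x = x_V + x_W$ with $x_V\in F_e(n+1)\cap V_1\oplus F_e(n+1)\cap V_2$ refined further, and similarly for $y$, and track which $V$--$W$ combinations survive.

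**Main obstacle.** The delicate part is the bookkeeping in the $F_e \perp F_\tau$ induction: one must combine the adjointness $a(\theta x, y) = -a(x,\theta y)$, the containments $\theta(F_\sigma(n+1)) \subset F_\sigma(n)$, and the explicit slot-decompositions of $F_\sigma(n)$ in just the right way so that every term of $a(x,y)$ either pairs $V_i$ against $W_i$ (vanishing by $W_i = V_i^\perp$) or drops to a lower induction level. A clean route is to set up a double induction showing $a(F_e(n), F_\tau(m)) = 0$ for all $n,m\ge 0$: fix $m$, induct on $n$ using $F_e(n+1) = \theta^{-1}(F_e(n))$-type control, and symmetrically; verifying that no "cross" slot $V_i$ vs $W_{\bar i}$ ever appears is exactly where the hypothesis $\bar e = \tau$ — that $F_e$ and $F_\tau$ live in complementary index patterns — does the work, and getting this rigorously right is the crux.
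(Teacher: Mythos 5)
Your reduction of the statement to the three pairwise orthogonality relations is fine, and your argument for $F\perp\tilde F$ works: since $\theta\vrule_{\tilde F}$ is invertible and $\theta\vrule_F$ is nilpotent, writing $y=\theta^k y'$ and moving $\theta^k$ across the form kills the pairing. (One correction there: $\theta$ is \emph{self}-adjoint for the reflexive form, not anti-self-adjoint. Indeed $(p_{V_1}^{V_2})^*=p_{W_2}^{W_1}$ and $(p_{W_1}^{W_2})^*=p_{V_2}^{V_1}$, so $\theta^*=p_{W_2}^{W_1}\circ p_{V_2}^{V_1}-p_{V_2}^{V_1}\circ p_{W_2}^{W_1}=\theta$ by the first lemma of the paper. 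Your $\pm$ hedge saves the $F\perp\tilde F$ step, but the sign you state is wrong.)

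The genuine gap is the step you yourself call ``the crux'': $F_e\perp F_\tau$. What you offer there is a plan, not a proof, and as described it does not close. The decompositions you invoke (Propositions~\ref{fsv1v2} and~\ref{fsvw}) split an element of $F_\sigma(n)$ into components that again lie in $F_\sigma(n)$ --- the level $n$ never drops --- so the ``residual terms handled by the induction hypothesis'' never appear. If you track the slot bookkeeping you find it cycles: starting from the bad pair $(F_e\cap V_1,\,F_\tau\cap W_2)$ and splitting along the allowed decompositions produces, besides good pairs $V_i$ versus $W_i$, the bad pairs $(W_1,W_1)$, $(W_2,W_1)$, $(W_2,W_2)$, $(W_1,W_2)$ and back again, with no reduction. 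The alternative via $\theta$ also fails: self-adjointness relates $\langle\theta x,y\rangle$ to $\langle x,\theta y\rangle$ but gives no access to $\langle x,y\rangle$ itself, and since $\theta$ is nilpotent on $F$ you cannot write $x=\theta x'$. The paper closes this step by an entirely different mechanism: it introduces $\tilde F_\sigma(n):=\bigcap_i((\tilde F_\sigma(n-1)\cap V_i)+(\tilde F_\sigma(n-1)\cap W_{\sigma(i)}))$, proves $F_\sigma(n)^\perp=\tilde F_\sigma(n)$ by a one-line induction using $(A+B)^\perp=A^\perp\cap B^\perp$, shows $F_{\bar\sigma}\oplus\tilde F\subset\tilde F_\sigma$ (using Proposition~\ref{fsvw}), and concludes $F_\sigma^\perp=F_{\bar\sigma}\oplus\tilde F$ by a dimension count based on Proposition~\ref{treillishomogene}. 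Some such global identification of $F_\sigma^\perp$ --- or another idea of comparable strength --- is what your argument is missing.
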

\begin{proof} For $\sigma\in {\mathcal S}_2$ let $\tilde
F_\sigma(0):= E$ and $\tilde F_\sigma(n+1):= \bigcap_i ((\tilde
F_\sigma(n)\cap V_i)+(\tilde F_\sigma(n)\cap W_{\sigma(i)}))$. The
sequence $\tilde F_\sigma(n)$ is decreasing and so stationary in
finite dimensions. Note $\tilde F_\sigma:=\bigcap_n \tilde
F_\sigma(n)$.

By induction it is easy to see that\footnote{By using again the fact
that $(A+B)^\perp=A^\perp\cap B^\perp$ and $(A\cap B)^\perp=A^\perp+
B^\perp$ for $A$ and $B$ vector subspaces of $E$} $\forall n, \forall
\sigma\in{\mathcal S}_2, F_\sigma(n)^\perp = \tilde F_\sigma(n)$.

By writing the definition of $\tilde F(n)$ and $\tilde F_\sigma(n)$
it is easy to see by induction that $\forall n, \forall \sigma,
\tilde F(n) \subset \tilde F_\sigma(n)$, from which we obtain
$\forall \sigma, \tilde F \subset \tilde F_\sigma$.

In order to finish the proof lets show the following lemma:

\begin{lem}
For $\sigma\in {\mathcal S}_2$ we have: $\forall n, F_{\bar \sigma}
\subset \tilde F_{\sigma}(n)$
\end{lem}
\begin{proof} By induction on $n$: It is clear for $n=0$. For
$n+1$ we have: $\tilde F_{\sigma}(n+1)=\bigcap_i ((\tilde
F_{\sigma}(n)\cap V_i)+(\tilde F_{\sigma}(n)\cap W_{\sigma(i)}))
\subset \bigcap_i ((F_{\bar \sigma}\cap V_i)+(F_{\bar \sigma}\cap
W_{\sigma(i)}))$ by induction hypothesis. The latter expression is
equal to $F_{\bar \sigma}$ by proposition~\ref{fsvw}. \end{proof}

{\em end of the proof of proposition~\ref{ffforth}:} As
$\dim(F_\sigma)+\dim(\tilde F_\sigma)=\dim(E)=\dim(F_\sigma) +
\dim(F_{\bar \sigma}) + \dim(\tilde F)$ (because $F_\sigma$ and
$\tilde F_\sigma$ are orthogonal, respectively by
proposition~\ref{treillishomogene}) we have $\dim(\tilde F_\sigma)=
\dim(F_{\bar \sigma}) + \dim(\tilde F)$. By the inclusion $F_{\bar
\sigma} \oplus \tilde F \subset \tilde F_{\sigma}$ we must have
$F_{\bar \sigma} \oplus \tilde F = \tilde F_{\sigma}$. \end{proof}

\subsection{Sublattice "with 5 direct sums"} It is known that the
lattice generated by the three vector subspaces of $E$: $U,V,W$ such
that $E =U \oplus W = V \oplus W$ has the following structure:

\begin{center}
\includegraphics[origin=c]{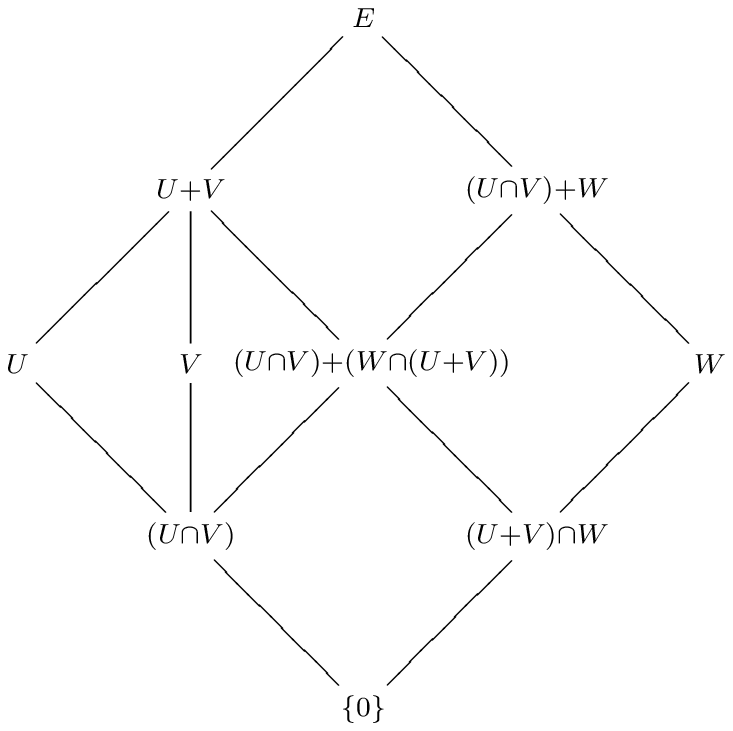}
\end{center}

The construction applies to the lattice $T$ generated by the $4$
subspaces of $E$, $V_1$, $V_2$, $W_1$, $W_2$ such that $E=V_1\oplus
V_2=W_1 \oplus W_2 = V_1\oplus W_2 =W_1 \oplus V_2$, in the
following way:

We can choose for $(U,V,W)$ the triple $(V_1,W_1,V_2)$ or
$(V_1,W_1,W_2)$. Note that then in the first case: $T_1:=(V_1 \cap
W_1) + (V_2 \cap (V_1 + W_1))$ and in the second: $U_1:=(V_1 \cap
W_1) + (W_2 \cap (V_1 + W_1))$.

The interval $[V_1 \cap W_1, V_1 + W_1]$ is a sub-lattice $T'$ of
$T$ which contains is particular the elements $V'_1:=V_1/(V_1 \cap
W_1)$, $W'_1:=W_1/(V_1 \cap W_1)$, $T'_1:=T_1/(V_1 \cap W_1)$ et
$U'_1:=V_1/(V_1 \cap W_1)$ verifying:

$$V'_1 \oplus W'_1 = V'_1 \oplus T'_1 = V'_1 \oplus U'_1 = W' \oplus
T'_1 = W'_1 \oplus U'_1$$

On the other hand it is possible that $T'_1 \cap U'_1 \neq \{0\}$
(as well as $T'_1 + U'_1 \neq (V_1 + W_1)/(V_1 \cap W_1)$).

Note is particular that $T'$ contains two sublattices of type $M_3$:
The one constructed on the elements $\{\{0\}, E ,V'_1, W'_1, T'_1\}$
and the one given by the elements $\{\{0\}, E ,V'_1, W'_1, U'_1\}$.

The data of $T'_1$ is equivalent to the data of an isomorphism $i$
of $V'_1$ onto $W'_1$, and the data of $U'_1$ of a second
isomorphism $j$ of $V'_1$ onto $W'_1$. the conjugation class in
$Gl(V'_1)$ of $j^{-1}\circ i$ is then an invariant of the lattice.
We can compare this result to the operators that Gelfand and
Ponomarev used in their paper~\cite{gp}.

\subsection{Example}\label{exemple}

In this paragraph we are going to study the structure of the lattice
generated by four finite-dimensional vector spaces $V_1, V_2, W_1,
W_2$ such that $E=V_1\oplus V_2=W_1 \oplus W_2 = V_1\oplus W_2 =W_1
\oplus V_2$ supposing that $\theta_{\mathcal V}^2 =0$ for ${\mathcal
V}=(E, V_1, V_2, W_1, W_2)$.

\begin{lem}
On a: $(V_1 + W_1)\cap V_2 = (V_1 + W_1)\cap W_2 \subset V_2 \cap
W_2$ et $(V_2 + W_2)\cap V_1 = (V_2 + W_2)\cap W_1 \subset V_1 \cap
W_1$
\end{lem}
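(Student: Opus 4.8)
The plan is to translate both asserted identities into statements about $\theta:=\theta_{\mathcal V}$, using that the five direct-sum hypotheses force $V_1\cap W_2=V_2\cap W_1=\{0\}$. First I would record the consequences of $\theta^2=0$. By Proposition~\ref{kerimtheta} and Lemma~\ref{F1sd}, $\ker\theta=F(1)=\bigoplus_{i,j}V_i\cap W_j$, which by the vanishing above is just $(V_1\cap W_1)\oplus(V_2\cap W_2)$; and $\theta^2=0$ says precisely that $\im\theta\subseteq\ker\theta=(V_1\cap W_1)\oplus(V_2\cap W_2)$. I would then note the elementary bookkeeping fact that intersecting this subspace with $V_2$ leaves exactly $V_2\cap W_2$ (since $V_1\cap V_2=\{0\}$ kills the first summand), and likewise with $W_2$ leaves $V_2\cap W_2$, while with $V_1$ or $W_1$ it leaves $V_1\cap W_1$. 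Combined with Lemma~\ref{swap} this yields $\theta(V_1)\subseteq V_2\cap W_2$, $\theta(V_2)\subseteq V_1\cap W_1$, $\theta(W_1)\subseteq V_2\cap W_2$, $\theta(W_2)\subseteq V_1\cap W_1$.

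The heart of the argument is to prove $(V_1+W_1)\cap V_2\subseteq V_2\cap W_2$. Given $x\in(V_1+W_1)\cap V_2$, write $x=v_1+w_1$ with $v_1\in V_1$, $w_1\in W_1$. Applying the projection $p_1$ onto $V_1$ along $V_2$ and using $p_1(x)=0$ (as $x\in V_2$) gives $v_1=-p_1(w_1)$, hence $x=w_1-p_1(w_1)=p_2(w_1)$. Now I would compute $\theta(x)$ in two ways. On one hand, since $x\in V_2$ we have $\theta(x)\in\theta(V_2)\subseteq V_1\cap W_1$. On the other hand $\theta(x)=\theta(w_1)-\theta(p_1(w_1))$, where $\theta(w_1)\in V_2\cap W_2$ (as $w_1\in W_1$) and $\theta(p_1(w_1))\in V_2\cap W_2$ (as $p_1(w_1)\in V_1$), so $\theta(x)\in V_2\cap W_2$. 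Hence $\theta(x)\in(V_1\cap W_1)\cap(V_2\cap W_2)\subseteq V_1\cap V_2=\{0\}$, so $x\in\ker\theta=(V_1\cap W_1)\oplus(V_2\cap W_2)$; intersecting once more with $V_2$ gives $x\in V_2\cap W_2$, as wanted.

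It remains to deduce the stated equalities and the second line by symmetry. The substitution $V_i\leftrightarrow W_i$ fixes the family of hypothesised direct sums and replaces $\theta$ by $-\theta$, hence preserves $\theta^2=0$; applying the inclusion just proved to the substituted data gives $(V_1+W_1)\cap W_2\subseteq V_2\cap W_2$. From $(V_1+W_1)\cap V_2\subseteq V_2\cap W_2\subseteq W_2$ together with $(V_1+W_1)\cap V_2\subseteq V_1+W_1$ we get $(V_1+W_1)\cap V_2\subseteq(V_1+W_1)\cap W_2$, and the reverse inclusion is the mirror statement, so the two coincide and lie in $V_2\cap W_2$. The second line follows from the substitution $1\leftrightarrow 2$, which again fixes all the hypotheses and, by the symmetry of $\theta$ recorded in the Lemma immediately after its definition, fixes $\theta$ itself. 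The only delicate point is choosing the right quantity to compute: once one observes that $\theta(x)$ is simultaneously trapped in $V_1\cap W_1$ and in $V_2\cap W_2$, there is no real obstacle, since everything else is the elementary bookkeeping set up in the first paragraph.
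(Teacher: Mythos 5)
Your proof is correct. It shares the paper's overall scaffolding --- both arguments rest on $\ker\theta=F(1)=(V_1\cap W_1)\oplus(V_2\cap W_2)$ (the cross terms vanishing because $E=V_1\oplus W_2=W_1\oplus V_2$), on $\theta^2=0\Rightarrow \im\theta\subset\ker\theta$, and both finish by intersecting $\ker\theta$ with $V_2$ (resp.\ $W_2$) and invoking symmetry --- but the central step is carried out differently. The paper obtains $(V_1+W_1)\cap V_2\subset\ker\theta$ in one line from the identity $(V_1+W_1)\cap(V_2+W_2)=\im\theta$, which is the $n=1$ case of $\im\theta^n=\tilde F(n)$ in Proposition~\ref{kerimtheta}: here $\tilde F(1)=\bigcap_{i,j}(V_i+W_j)$ collapses to $(V_1+W_1)\cap(V_2+W_2)$ because $V_1+W_2=V_2+W_1=E$. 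You instead prove $\theta(x)=0$ directly, by trapping $\theta(x)$ in $V_1\cap W_1$ (since $x\in V_2$) and simultaneously in $V_2\cap W_2$ (since $x\in V_1+W_1$), these two spaces meeting only in $\{0\}$. Your route is more hands-on and avoids the image half of Proposition~\ref{kerimtheta}, using only the kernel computation $\ker\theta=F(1)$ and Lemma~\ref{swap}; the paper's version is shorter but leans on the duality-based identification of $\im\theta$ with $\tilde F(1)$. One small remark: the detour through $x=p_2(w_1)$ in your second paragraph is unnecessary, since $\theta(x)=\theta(v_1)+\theta(w_1)$ already lies in $V_2\cap W_2$ by the inclusions recorded in your first paragraph. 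Your symmetry arguments ($V_i\leftrightarrow W_i$ sending $\theta$ to $-\theta$, and $1\leftrightarrow2$ fixing $\theta$) are sound and match the paper's appeal to ``similarly''.
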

\begin{proof} It is clear that $(V_1 + W_1)\cap V_2 \subset (V_1
+ W_1)\cap (V_2+W_2) =\im \theta \subset \ker \theta=(V_1 \cap
W_1)\oplus (V_2 \cap W_2)$. From which one can see that $(V_1 +
W_1)\cap V_2 \subset \ker \theta \cap V_2 = V_2 \cap W_2$. So $(V_1
+ W_1)\cap V_2 \subset (V_1 + W_1)\cap W_2$ and similarly $(V_1 +
W_1)\cap W_2 \subset (V_1 + W_1)\cap V_2$, which proves the first
assertion. The proof of the second one is similar.\end{proof}

Note $X_0=\{0\}, X_1=(V_2 + W_2)\cap V_1, X_2=V_1 \cap W_1, X_3=V_1$
et $Y_0=\{0\}, Y_1=(V_1 + W_1)\cap V_2, Y_2=V_2 \cap W_2, Y_3=V_2$.

As $X_0 \subset X_1 \subset X_2 \subset X_3=V_1$ and $Y_0 \subset
Y_1 \subset Y_2 \subset Y_3=V_2$ and $V_1 \cap V_2 =\{0\}$, it is
easy to see that the lattice ${\mathcal T}_0$ generated by the $X_i$
and the $Y_j$ for $i,j=0, 1, 2, 3$ is precisely the set $\{X_i
\oplus Y_j \; \vrule \; i,j=0, 1, 2, 3\}$, ordered by inclusion.

It is easy to verify that $X_i \oplus Y_j= (X_i\oplus V_2) \cap (V_1
\oplus Y_j)$ and so the lattice ${\mathcal T}_0$ can be written as
well:

Note $X'_i=X_i\oplus V_2$ et $Y'_j=V_1\oplus Y_j$. We have then:
$X'_0=V_2$, $X'_1=((V_2 + W_2)\cap V_1) + V_2$, $X'_2=(V_1 \cap
W_1)+ V_2$, and $X'_3=V_1 + V_2$.

Let's verify: $X'_1=V_2+W_2$. It is clear that $X'_2 \subset
V_2+W_2$ inversely if $x\in V_2$ and $y\in W_2$, $x+y$ can be
written uniquely $a+b$ with $a\in V_1$ and $b\in V_2$, so
$a=((x-b)+y)+b \in (V_2 + W_2)\cap V_1$ and so $a+b \in ((V_2 +
W_2)\cap V_1)+V_2$. So $X'_1=V_2+W_2$.

We have as well: $Y'_0=V_1$, $Y'_1=V_1 + W_1$, $Y'_2=(V_2 \cap W_2)+
V_1$, et $Y'_3=V_1 + V_2$.

The underlying set of the lattice ${\mathcal T}_0$ is so: $\{X'_i
\cap Y'_j \; \vrule \; i,j=0, 1, 2, 3\}$.

We are going to prove that ${\mathcal T} = {\mathcal T}_0 \cup
\{W_1, W_2\}$ is a lattice. Let us verify that $\mathcal T$ is
stable by intersection and sum.

Verify that $(X_i\oplus Y_j)+W_1 \in {\mathcal T}$: If $j=0$ and
$i=0, 1,2$ it is clear that $(X_i\oplus Y_j)+W_1= W_1 \in {\mathcal
T}$. If $j=0$ and $i=3$, $(X_i\oplus Y_j)+W_1=V_1 + W_1 \in
{\mathcal T}$. If $j\ge 1$, $(X_i\oplus Y_j)+W_1 = (Y_1 + W_1 + X_i
+ Y_j)$. By a similar argument to the one which allowed us to have
before: $((V_2 + W_2)\cap V_1) + V_2=V_2 + W_2$, one can prove $Y_1
+W_1=((V_1 + W_1)\cap V_2) + W_1=V_1 + W_1 \in {\mathcal T}_0$, and
so $Y_1 + W_1 + X_i + Y_j\in{\mathcal T}_0 \subset {\mathcal T}$.

By using the second representation of ${\mathcal T}_0$ we can show
for every $i$ and $j$, $(X'_i\cap Y'_j) \cap W_1 \in {\mathcal T}$.
The only delicate point is to verify that $((V_1 \cap W_1)+ V_2)
\cap W_1=V_1 \cap W_1$. Let's do it: It is clear that $V_1 \cap W_1
\subset ((V_1 \cap W_1)+ V_2) \cap W_1$, inversely let $x\in V_1
\cap W_1$, $y\in V_2$ and $z\in W_1$ such that $x+y=z$. We have
then: $y=z-x \in V_2 \cap W_1 =\{0\}$, and so $z=x \in V_1 \cap
W_1$.

In conclusion we can state:

\begin{theo}

The structure of the lattice generated by the four
finite-dimensional vector spaces $V_1, V_2, W_1, W_2$ such that
$E=V_1\oplus V_2=W_1 \oplus W_2 = V_1\oplus W_2 =W_1 \oplus V_2$ and
supposing that $\theta_{\mathcal V}^2 =0$ for ${\mathcal V}=(E, V_1,
V_2, W_1, W_2)$ is given by the following diagram:

\begin{center}
\includegraphics[origin=c]{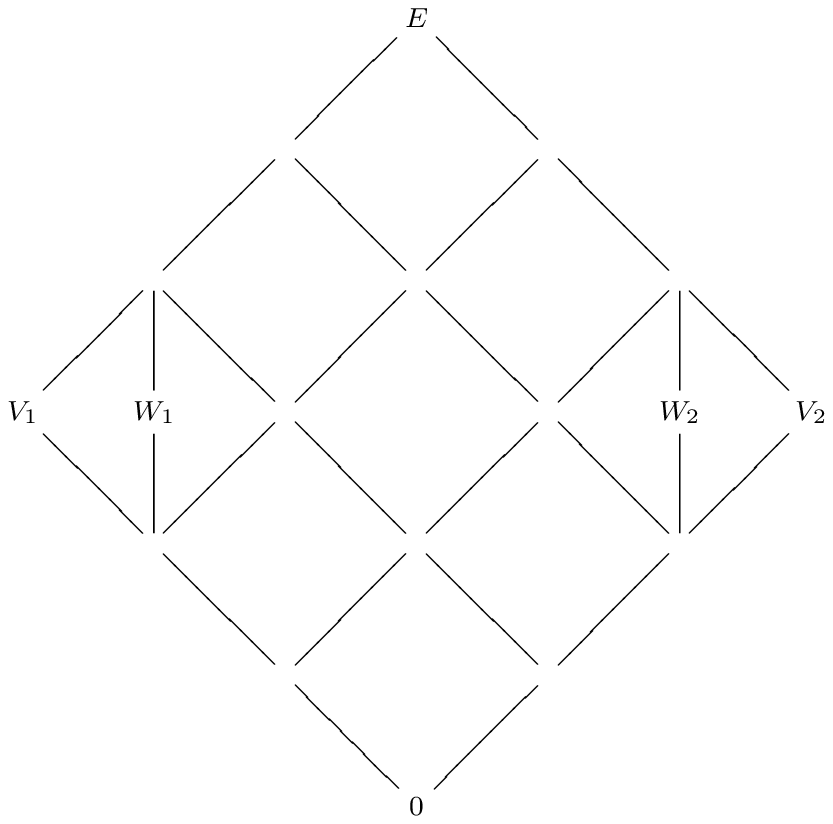}
\end{center}

\end{theo}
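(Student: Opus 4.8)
The plan is to prove that ${\mathcal T}$ (introduced above as ${\mathcal T}_0\cup\{W_1,W_2\}$) is exactly the lattice ${\mathcal L}$ generated by $V_1,V_2,W_1,W_2$, and then to read off its Hasse diagram. The inclusion ${\mathcal T}\subset{\mathcal L}$ is immediate: the formulas $X_0=V_1\cap V_2$, $X_1=(V_2+W_2)\cap V_1$, $X_2=V_1\cap W_1$, $X_3=V_1$, together with the analogous ones for the $Y_j$, present all generators of ${\mathcal T}_0$ as lattice words in $V_1,V_2,W_1,W_2$, so ${\mathcal T}_0\subset{\mathcal L}$, and $W_1,W_2\in{\mathcal L}$ trivially. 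For ${\mathcal L}\subset{\mathcal T}$ it suffices to check that ${\mathcal T}$ is stable under $+$ and $\cap$, since it contains $V_1=X_3\oplus Y_0$, $V_2=X_0\oplus Y_3$, $W_1$ and $W_2$. Stability of ${\mathcal T}_0$, and the cases ${\mathcal T}_0+W_1$ and ${\mathcal T}_0\cap W_1$ (worked through the two presentations $X_i\oplus Y_j=X'_i\cap Y'_j$), are the computations preceding the statement; the remaining cases ${\mathcal T}_0+W_2$ and ${\mathcal T}_0\cap W_2$ go the same way once one has the companions $X_1+W_2=V_2+W_2$ (proved like $Y_1+W_1=V_1+W_1$, now using $E=V_1\oplus W_2$) and $((V_2\cap W_2)+V_1)\cap W_2=V_2\cap W_2$, while $W_1+W_2=E=X_3\oplus Y_3$ and $W_1\cap W_2=\{0\}=X_0\oplus Y_0$ are among the hypotheses. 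Hence ${\mathcal L}={\mathcal T}$.

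The next step is to determine the order on ${\mathcal T}_0$. The inclusions $X_0\subset X_1\subset X_2\subset X_3$ and $Y_0\subset Y_1\subset Y_2\subset Y_3$ make the $X_i$, and the $Y_j$, into $4$-element chains --- the only non-trivial step being $X_1\subset X_2$, which is the lemma $(V_2+W_2)\cap V_1\subset V_1\cap W_1$ and is where $\theta^2=0$ is used --- and $X_3\cap Y_3=V_1\cap V_2=\{0\}$ makes every $X_i\oplus Y_j$ a genuine direct sum. Then $X_i\oplus Y_j\subset X_k\oplus Y_l$ precisely when $i\le k$ and $j\le l$: one implication is clear, and for the converse one intersects with $V_1$, using $(X_i\oplus Y_j)\cap V_1=X_i$ (because $Y_j\subset V_2$ and $V_1\cap V_2=\{0\}$), then symmetrically with $V_2$. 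Thus ${\mathcal T}_0$ is the product poset of two $4$-chains, i.e. a $4\times 4$ grid, with $(X_i\oplus Y_j)+(X_k\oplus Y_l)=X_{\max(i,k)}\oplus Y_{\max(j,l)}$ and, via the chains $X'_0\subset\cdots\subset X'_3$, $Y'_0\subset\cdots\subset Y'_3$ and the identity $X'_m\cap Y'_n=X_m\oplus Y_n$, $(X_i\oplus Y_j)\cap(X_k\oplus Y_l)=X_{\min(i,k)}\oplus Y_{\min(j,l)}$.

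Finally one places $W_1$ and $W_2$. From $V_1\cap W_1=X_2=X_2\oplus Y_0$ and $V_1+W_1=Y'_1=X_3\oplus Y_1$ (computed before the statement) one gets $X_2\oplus Y_0\subset W_1\subset X_3\oplus Y_1$; and with $T_1:=X_2\oplus Y_1$, the identities $W_1\cap T_1=X_2\oplus Y_0$ (an element $a+b$ of $W_1$ with $a\in V_1\cap W_1$ and $b\in Y_1\subset V_2$ has $b\in W_1\cap V_2=\{0\}$), $W_1+T_1=W_1+Y_1=V_1+W_1=X_3\oplus Y_1$, $V_1\cap T_1=X_2\oplus Y_0$, $V_1+T_1=X_3\oplus Y_1$ show that the interval $[X_2\oplus Y_0,\,X_3\oplus Y_1]$ --- which inside ${\mathcal T}_0$ is the $2\times 2$ square with middle elements $V_1=X_3\oplus Y_0$ and $T_1$ --- becomes, once $W_1$ is adjoined, a copy of $M_3$ (generically the three middle elements are pairwise incomparable: $Y_1\not\subset W_1$ gives $T_1\not\subset W_1$, the meet computation gives $W_1\not\subset T_1$, and so on). Symmetrically $W_2$ turns the square $[X_0\oplus Y_2,\,X_1\oplus Y_3]$, with middle elements $V_2=X_0\oplus Y_3$ and $T_2:=X_1\oplus Y_2$, into a copy of $M_3$. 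Since $W_1$ and $W_2$ are incomparable with $W_1\cap W_2=\{0\}$, $W_1+W_2=E$, and each $W_i$ is comparable to no grid node outside its own square except those lying below the square's bottom or above its top, superimposing these two copies of $M_3$ on the $4\times 4$ grid of the $X_i\oplus Y_j$ gives exactly the diagram of the statement (coincidences among consecutive $X_i$ or $Y_j$ give the degenerate cases). The work is almost entirely bookkeeping; the real obstacle is to run the closure verification and the incomparability check through every case without gaps, each collapsing to one of the small identities above --- all of which rest on the hypothesis $\theta^2=0$, as it is precisely this that yields $(V_1+W_1)\cap V_2\subset V_2\cap W_2$ and cuts the lattice down to the finite list ${\mathcal T}_0\cup\{W_1,W_2\}$.
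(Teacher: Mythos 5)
Your proposal is correct and follows essentially the same route as the paper, whose proof of this theorem is precisely the preceding text of the subsection: the lemma $(V_1+W_1)\cap V_2\subset V_2\cap W_2$ derived from $\im\theta\subset\ker\theta$, the chains $X_i$, $Y_j$ with the two presentations $X_i\oplus Y_j=X'_i\cap Y'_j$, and the closure of ${\mathcal T}_0\cup\{W_1,W_2\}$ under sum and intersection. You merely spell out more explicitly than the paper the resulting order structure (the $4\times4$ grid and the two $M_3$ intervals around $W_1$ and $W_2$), which is consistent with the diagram.
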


\section{Application to representation theory}
\subsection{Preliminaries}
\subsubsection{General case}
We will note $\g{g}\g{l}(V_1,V_2,W_1,W_2)$ the set of $a\in
\g{g}\g{l}(E)$ such that $a V_i \subset V_i$ et $a W_j \subset W_j$.
It is easy to see that $\g{g}\g{l}(V_1,V_2,W_1,W_2)$ is a sub
Lie-algebra of $\g{g}\g{l}(E)$. Let $\g{g}$ be a sub Lie-algebra of
$\g{g}\g{l}(V_1,V_2,W_1,W_2)$. We have for all $A, B$
vector subspaces of $E$ such that $\g{g} A \subset A$ et $\g{g} B
\subset B$: $\g{g} (A+B) \subset (A+B)$ et $\g{g} (A\cap B) \subset
(A\cap B)$. So we have, as $\g{g}$ leaves invariant $V_1,V_2,W_1$ et
$W_2$, $\g{g}$ leaves invariant every element of the lattice
generated from $V_1,V_2,W_1$ et $W_2$ by intersection and sum.

It is easy to see that the projections $p_{V_i}^{V_j}$ and
$p_{W_i}^{W_j}$ commute to the action of $\g{g}$: $\forall a\in
\g{g}, a p_{V_i}^{V_j}= p_{V_i}^{V_j} a$ and $a p_{W_i}^{W_j}=
p_{W_i}^{W_j} a$. So every element of the associative unitary
algebra $A$ generated by the $p_{V_i}^{V_j}$ and the $p_{W_i}^{W_j}$
commutes to every $a\in \g{g}$. As an example
$\theta=[p_{W_1}^{W_2},p_{V_1}^{V_2}]$ commutes to every $a\in
\g{g}$.

\begin{lem}
The data of two supplementary vector-spaces $V_1$ et $V_2$ stable for
the action of a linear Lie algebra $\g{g}$ is equivalent to the data
of an endomorphism $L$ commuting with the action of $\g{g}$,
verifying $L^2=I$. $V_1$ et $V_2$ are then the proper subspaces of
$L$ associated to the eigenvalues $1$ and $-1$.
\end{lem}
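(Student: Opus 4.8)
The plan is to prove both directions of the equivalence. In one direction, suppose we are given $V_1$ and $V_2$ with $E = V_1 \oplus V_2$, both stable under $\g{g}$; I would define $L = p_{V_1}^{V_2} - p_{V_2}^{V_1}$. Since $p_{V_1}^{V_2} + p_{V_2}^{V_1} = I$, we have $L = 2 p_{V_1}^{V_2} - I$. The relation $L^2 = I$ follows from the elementary projection identities $(p_{V_1}^{V_2})^2 = p_{V_1}^{V_2}$ and $p_{V_1}^{V_2} p_{V_2}^{V_1} = p_{V_2}^{V_1} p_{V_1}^{V_2} = 0$, so that $L^2 = 4(p_{V_1}^{V_2})^2 - 4 p_{V_1}^{V_2} + I = I$. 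That $L$ commutes with the action of $\g{g}$ is exactly the observation already made in the excerpt: since $\g{g}$ leaves $V_1$ and $V_2$ invariant, the projections $p_{V_i}^{V_j}$ commute with every $a \in \g{g}$, hence so does $L$. Finally, the eigenspaces: $Lv = v$ for $v \in V_1$ and $Lv = -v$ for $v \in V_2$, so $V_1 \subset \ker(L - I)$ and $V_2 \subset \ker(L + I)$; since these two kernels have trivial intersection (as $2 = 1 - (-1) \ne 0$ in $\corps$, using the characteristic-$\ne 2$ hypothesis) and their sum already contains $V_1 \oplus V_2 = E$, the inclusions are equalities.

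In the other direction, suppose we are given an endomorphism $L$ of $E$ commuting with the action of $\g{g}$ and satisfying $L^2 = I$. I would set $V_1 = \ker(L - I)$ and $V_2 = \ker(L + I)$, the eigenspaces for $1$ and $-1$. The decomposition $E = V_1 \oplus V_2$ is the standard diagonalization of an involution: for any $x \in E$ write $x = \tfrac12(x + Lx) + \tfrac12(x - Lx)$, the first summand lying in $V_1$ and the second in $V_2$ (here again we use $2 \ne 0$ in $\corps$); and $V_1 \cap V_2 = \{0\}$ since a vector in the intersection satisfies $x = -x$, i.e. $2x = 0$. Stability of $V_i$ under $\g{g}$ is immediate from commutation: if $a \in \g{g}$ and $Lx = \pm x$, then $L(ax) = a(Lx) = \pm ax$, so $ax \in V_i$.

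To close the equivalence as a genuine bijective correspondence one should also check the two constructions are mutually inverse. Starting from $(V_1, V_2)$, forming $L = p_{V_1}^{V_2} - p_{V_2}^{V_1}$, and then taking eigenspaces returns $(V_1, V_2)$ by the eigenspace computation above; starting from $L$, taking eigenspaces, and then forming the associated involution returns $L$ because an endomorphism with $L^2 = I$ acts as $+1$ on $V_1$ and $-1$ on $V_2$, which forces it to equal $p_{V_1}^{V_2} - p_{V_2}^{V_1}$ on $E = V_1 \oplus V_2$. None of this is hard; the only point requiring the standing hypothesis on $\corps$ is the invertibility of $2$, used both to diagonalize the involution and to separate its eigenspaces, and I would flag it explicitly rather than treat it as an obstacle. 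The main thing to get right is simply bookkeeping the two directions cleanly and noting that the commutation statement is already available from the discussion preceding the lemma.
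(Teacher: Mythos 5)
Your proof is correct and takes essentially the same route as the paper's: the forward direction via $L=p_{V_1}^{V_2}-p_{V_2}^{V_1}$ and the converse via the eigenspace decomposition of an involution, with commutation inherited from the projections. You simply spell out the details (the computation $L^2=I$, the $\tfrac12(x\pm Lx)$ splitting, and the role of $\mathrm{char}\,\corps\neq 2$) that the paper leaves as "easy to see."
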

\begin{proof} In fact it is easy to see that the endomorphism
$L=p_{V_1}^{V_2}-p_{V_2}^{V_1}$ is of square identity and commutes
to the action of $\g{g}$. Inversely if an endomorphism $L$ is such
that $L^2=I$ and commutes to the action of $\g{g}$, it admits the
proper values $1$ and/or $-1$. The corresponding eigenspaces are
supplementary and stable for the action of $\g{g}$.\end{proof}

\subsubsection{Reflexive case}

We recall that in the reflexive case we suppose that there exists a
non degenerate reflexive form $\langle \cdot,\cdot\rangle$ such that
$\forall a\in \g{g}$, $\forall x,y\in E$, we have: $\langle
ax,y\rangle +\langle x,ay\rangle =0$.

Recall as well that if $V$ is a subspace of $E$ which is
$\g{g}$-invariant then $V^\perp$ is invariant as well. We suppose
here that $W_1=V_1^\perp$, $W_2=V_2^\perp$. These two spaces are
supplementary and invariant.

\begin{lem}
Let $L^*$ be the adjoint with respect to a reflexive form of the
endomorphism $L=p_{V_1}^{V_2}-p_{V_2}^{V_1}$ which commutes to the
action of $\g{g}$ and is such that $L^2=I$. Then $L^*$ is of square
identity, commutes to the action of $\g{g}$ and one has:
$$L^*=p_{V_2^\perp}^{V_1^\perp}-p_{V_1^\perp}^{V_2^\perp}$$
\end{lem}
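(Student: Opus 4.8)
The plan is to verify the three claims about $L^*$ in turn, using the hypothesis that $L=p_{V_1}^{V_2}-p_{V_2}^{V_1}$ satisfies $L^2=I$ and commutes with $\g{g}$, and that the form $\langle\cdot,\cdot\rangle$ is non-degenerate and reflexive with $W_i=V_i^\perp$. First I would record that adjunction $L\mapsto L^*$ with respect to a non-degenerate form is an anti-automorphism of the algebra of endomorphisms that is also an involution: $(AB)^*=B^*A^*$ and $(L^*)^*=L$. From this, $(L^*)^2=(L^2)^*=I^*=I$, which gives the first claim immediately.

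For the commutation claim, I would use the defining relation of the reflexive case: $\langle ax,y\rangle+\langle x,ay\rangle=0$ for all $a\in\g{g}$, i.e.\ $a^*=-a$. Since $L$ commutes with every $a\in\g{g}$, we have $aL=La$, hence $(aL)^*=(La)^*$, i.e.\ $L^*a^*=a^*L^*$, i.e.\ $-L^*a=-aL^*$, so $L^*a=aL^*$ for all $a\in\g{g}$; thus $L^*$ commutes with the action of $\g{g}$.

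The main content is the explicit formula $L^*=p_{V_2^\perp}^{V_1^\perp}-p_{V_1^\perp}^{V_2^\perp}$. I would argue via eigenspaces. Since $L^2=I$ and $\operatorname{char}\corps\neq 2$, $E$ decomposes as the $(+1)$- and $(-1)$-eigenspaces of $L$, which are exactly $V_1$ and $V_2$; likewise $(L^*)^2=I$ gives a decomposition of $E$ into the $(\pm1)$-eigenspaces of $L^*$. I claim the $(+1)$-eigenspace of $L^*$ is $V_2^\perp$ and the $(-1)$-eigenspace is $V_1^\perp$. Indeed, for $u$ in the $(+1)$-eigenspace of $L^*$ and $x\in V_2$ (the $(-1)$-eigenspace of $L$), $\langle u,x\rangle=\langle u,-Lx\rangle=-\langle L^*u,x\rangle=-\langle u,x\rangle$, so $\langle u,x\rangle=0$; hence this eigenspace is contained in $V_2^\perp$, and a dimension count (the two eigenspaces of $L^*$ are supplementary, and $\dim V_2^\perp=\dim E-\dim V_2=\dim V_1$, while the $(-1)$-eigenspace of $L^*$ is symmetrically contained in $V_1^\perp$ with $\dim V_1^\perp=\dim V_2$) forces equality: the $(+1)$-eigenspace of $L^*$ is $V_2^\perp=W_2$ and the $(-1)$-eigenspace is $V_1^\perp=W_1$. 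An endomorphism with $(+1)$-eigenspace $W_2$ and $(-1)$-eigenspace $W_1$ and square the identity is exactly $p_{W_2}^{W_1}-p_{W_1}^{W_2}=p_{V_2^\perp}^{V_1^\perp}-p_{V_1^\perp}^{V_2^\perp}$, which is the asserted formula.

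The step I expect to be the only mildly delicate one is pinning down the eigenspaces of $L^*$ by the non-degeneracy/dimension argument rather than by a direct computation with projections; a purely algebraic alternative is to check directly that $(p_{V_1}^{V_2})^*=p_{V_2^\perp}^{V_1^\perp}$ using $\langle p_{V_1}^{V_2}x,y\rangle=\langle x, (p_{V_1}^{V_2})^*y\rangle$ and testing against $x\in V_1,V_2$ and using $V_2^\perp,V_1^\perp$ as the complementary pair, then take $L^*=(p_{V_1}^{V_2})^*-(p_{V_2}^{V_1})^*$; either route is routine and short once the adjunction formalism is set up.
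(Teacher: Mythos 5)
Your proof is correct, and for the main formula it takes a genuinely different route from the paper. The paper simply names $L':=p_{V_2^\perp}^{V_1^\perp}-p_{V_1^\perp}^{V_2^\perp}$ and verifies $\langle Lv,w\rangle=\langle v,L'w\rangle$ by one direct computation, decomposing $v$ along $V_1\oplus V_2$ and $w$ along $V_1^\perp\oplus V_2^\perp$ and using the four orthogonality relations $\langle V_1,V_1^\perp\rangle=\langle V_2,V_2^\perp\rangle=0$; it does not spell out the two auxiliary claims ($(L^*)^2=I$ and commutation with $\g g$), which you prove cleanly via the anti-automorphism property of adjunction and $a^*=-a$. Your route to the formula instead characterizes $L^*$ by its eigenspaces: you show the $+1$-eigenspace of $L^*$ lands in $V_2^\perp$ and the $-1$-eigenspace in $V_1^\perp$ (correctly using that for a reflexive form left and right adjoints agree, and that $\mathrm{char}\,\corps\neq 2$ to split $E$ into the $\pm1$-eigenspaces of an involution), then close the gap by a dimension count using $\dim V_i^\perp=\dim E-\dim V_i$. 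Both arguments are sound; the paper's is shorter and purely mechanical, while yours is more conceptual and makes visible why the answer must be the involution with eigenspaces $(V_2^\perp,V_1^\perp)$ rather than $(V_1^\perp,V_2^\perp)$ --- the sign flip that is the only real content here. The direct check of $(p_{V_1}^{V_2})^*=p_{V_2^\perp}^{V_1^\perp}$ that you sketch at the end is essentially the paper's computation in disguise.
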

\begin{proof} Let's note
$L':=p_{V_2^\perp}^{V_1^\perp}-p_{V_1^\perp}^{V_2^\perp}$ and let us
show that $\forall v,w\in E, \langle Lv,w\rangle =\langle
v,L'w\rangle$,

We write for $x\in V_1,x'\in V_2,y\in V_1^\perp,y'\in V_2^\perp$,
\begin{eqnarray*}
\langle L(x+x'),y+y'\rangle & = & \langle x-x',y+y'\rangle\\
& = & \langle x,y'\rangle-\langle x',y\rangle\\
& = & \langle x+x',-y+y'\rangle\\
& = & \langle x+x',L'(y+y')\rangle
\end{eqnarray*}
As a consequence $L^*=L'$.\end{proof}

\medskip

Let's remark that $L=-L^*$ for $L=p_{V_1}^{V_2}-p_{V_2}^{V_1}$ is
equivalent to have $V_1=V_1^\perp$ and $V_2=V_2^\perp$. It is the
same to impose $\langle Lx, Ly \rangle=-\langle x, y \rangle$ for
$x,y\in E$ {\em i.e.} $L$ is anti-hermitian with respect to the
reflexive form.

The data of $L\in End(E)$ such that $L^2=Id$ and of a reflexive form
for which $L$ is anti-hermitian is also called a para-Kähler
structure.

\medskip
We recall that the reflexive representation $\g g \subset \g{gl}(E)$
is called {\em weakly irreducible} if any invariant subspace
$V\subset E$ is either $\{0\}$, $E$, or is degenerate {\em i.e.}
$V\cap V^\perp\neq \{0\}$.

As we saw in paragraph~\ref{orth}, if $W_1=V_1^\perp$ and
$W_2=V_2^\perp$, we have in the weakly irreducible case and if $V_1$
et $V_2$ are different from $\{0\}$ necessarily $E=F_e$. In fact if
two of the three spaces $F_e$, $F_\tau$, $\tilde F$ are non trivial
then $E$ is not weakly irreducible. The more in the case $E=F_\tau$
or $E=\tilde F$, the fact that $E=V_1 \oplus V_1^\perp$ would imply
that if $E$ is non trivial, $E$ is not weakly irreducible.

\begin{prop}\label{identification_au_dual}
In the case the representation $E=V_1\oplus V_2$ is weakly
irreducible and if $V_1$ and $V_2$ are different from $\{0\}$, $V_2$
identifies (as a representation) to the dual $V_1^*$ of $V_1$.
\end{prop}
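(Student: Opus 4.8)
The setting is $E=V_1\oplus V_2$ with a non-degenerate reflexive form, $W_1=V_1^\perp$, $W_2=V_2^\perp$, the representation weakly irreducible, and $V_1,V_2\neq\{0\}$; as noted just above, this forces $E=F_e$, so in particular $V_1\cap W_1=V_1\cap V_1^\perp$ and $V_2\cap W_2=V_2\cap V_2^\perp$ are the only potentially nonzero ``diagonal'' intersections. The plan is to produce a $\g g$-equivariant isomorphism $V_2\cong V_1^*$ by means of the reflexive form itself: the map $\Phi\colon V_2\to V_1^*$, $\Phi(v_2)(v_1):=\langle v_1,v_2\rangle$. First I would check $\Phi$ is well defined and $\g g$-equivariant: equivariance is exactly the infinitesimal invariance $\langle ax,y\rangle+\langle x,ay\rangle=0$ together with the fact that the $\g g$-action on $V_1^*$ is the contragredient one, $a\cdot u=-u\circ a$; so $\Phi(av_2)(v_1)=\langle v_1,av_2\rangle=-\langle av_1,v_2\rangle=-\Phi(v_2)(av_1)=(a\cdot\Phi(v_2))(v_1)$.

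Next I would show $\Phi$ is injective, equivalently that $\langle\cdot,\cdot\rangle$ restricted to $V_1\times V_2$ is non-degenerate on the $V_2$ side. Its kernel is $\{v_2\in V_2:\ \langle v_1,v_2\rangle=0\ \forall v_1\in V_1\}=V_2\cap V_1^\perp=V_2\cap W_1$. But $V_2\cap W_1\subset F_e(1)\cap F_\tau$-type term; more precisely $V_2\cap W_1\subset F_\tau(1)\subset F_\tau$, and since $E=F_e$ we have $F_\tau=\{0\}$ (using Proposition~\ref{treillishomogene}: $E=(E\cap F_e)\oplus(E\cap F_\tau)\oplus(E\cap\tilde F)=F_e\oplus F_\tau\oplus\tilde F$, so $E=F_e$ gives $F_\tau=\tilde F=\{0\}$). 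Hence $\ker\Phi=\{0\}$. Finally, a dimension count: by the above $\dim E=\dim F_e$, and Proposition~\ref{ffforth} gives $E=F_e$ orthogonally, with $F_e$ itself carrying the nondegenerate form; the standard fact that for a nondegenerate form $\dim V_1+\dim V_1^\perp=\dim E$ together with $W_1=V_1^\perp$, $E=W_1\oplus W_2$ forces $\dim W_2=\dim V_1$, and symmetrically $\dim V_2=\dim W_1=\dim E-\dim V_1=\dim V_1^*$. So the injection $\Phi\colon V_2\hookrightarrow V_1^*$ between spaces of equal dimension is an isomorphism.

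The one point that needs genuine care — and which I expect to be the main obstacle — is justifying $V_2\cap V_1^\perp=\{0\}$ cleanly, i.e. that weak irreducibility really does force the vanishing of the ``mixed'' intersection $V_2\cap W_1$ and not merely of $F_\tau$ and $\tilde F$ as abstract summands. The clean route is: $V_2\cap W_1$ is an element of the lattice generated by $V_1,V_2,W_1,W_2$, hence by Proposition~\ref{treillishomogene} it splits as $(V_2\cap W_1\cap F_e)\oplus(V_2\cap W_1\cap F_\tau)\oplus(V_2\cap W_1\cap\tilde F)$; the first summand is contained in $F_e(1)\cap$ (a single $V_i\cap W_j$ with $i\neq j$)$=\{0\}$ by Lemma~\ref{F1sd} and the explicit description $F_e(1)=(V_1\cap W_1)\oplus(V_2\cap W_2)$, while the other two summands vanish because $F_\tau=\tilde F=\{0\}$. (Alternatively one invokes weak irreducibility directly: $V_2\cap W_1=V_2\cap V_1^\perp$ is $\g g$-invariant and totally isotropic-ish, but the lattice argument is more self-contained given what precedes.) Once $\ker\Phi=\{0\}$ is secured, the rest is the routine equivariance check and dimension count sketched above, and the proposition follows.
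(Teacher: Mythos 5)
Your map is the same one the paper uses, and your overall strategy (equivariance, injectivity from the vanishing of a mixed intersection, surjectivity by a dimension count) coincides with the paper's proof. In fact your justification of injectivity is more careful than the paper's one-line appeal to the vanishing of the mixed intersection: tracing $\ker\Phi=V_2\cap W_1$ into $F_\tau(1)\subset F_\tau=\{0\}$ (using $E=F_e$ and Proposition~\ref{treillishomogene}) is exactly the right argument, and the explicit equivariance check is a welcome addition the paper omits.

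However, the last link of your dimension chain is a non sequitur as written: you conclude $\dim V_2=\dim W_1=\dim E-\dim V_1=\dim V_1^*$, but $\dim V_1^*=\dim V_1$, so the final equality holds only if $\dim V_1=\tfrac12\dim E$, which you have not established at that point. This half-dimensionality is precisely what the paper's dimension count supplies: since $V_1\cap V_2^\perp=V_1\cap W_2\subset F_\tau(1)=\{0\}$ and $\dim V_1+\dim V_2^\perp=\dim V_1+\dim E-\dim V_2=\dim E$, both $V_2$ and $V_2^\perp$ are complements of $V_1$, whence $\dim V_2=\dim V_2^\perp=\dim E-\dim V_2$ and $\dim V_1=\dim V_2=\tfrac12\dim E$. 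Alternatively, the same vanishing $V_1\cap W_2=\{0\}$ gives an injection $V_1\hookrightarrow V_2^*$ symmetric to your $\Phi$, and the two injections together force $\dim V_1=\dim V_2$. Either repair uses only facts you already have in hand, so the gap is easily closed; but as stated, the surjectivity of $\Phi$ is not actually proved.
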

\begin{proof} It identifies by the map
$$\begin{array}{l}
V_2 \to V_1^*\\
v' \mapsto (w \mapsto \langle v',w\rangle )
\end{array}
$$
which is injective by the fact that $V_1 \cap V_2^\perp =\{0\}$ and
surjective for dimension reasons. In fact we have $V_1 \oplus
V_2^\perp = V_1 \oplus V_2$ implies that $dim(V_2)=dim(V_2^\perp)$.
From this we obtain $dim(V_2)=\frac{1}{2}dim(E)$ and similarly
$dim(V_1)=dim(E)-\frac{1}{2}dim(E)=\frac{1}{2}dim(E)$. As
$dim(V_1^*)=dim(V_1)$, we have: $dim(V_1^*)=dim(V_2)$. \end{proof}

\subsection{Main result}
The following result could be formulated thanks to a suggestion of
Martin Olbrich. He communicated to us a direct proof of the
result~\ref{Olbrich}, which we had established for pseudo-riemannian
holonomy algebras only.

\begin{theo}\label{deux_isotropes}
If $E$ is a representation admitting two decompositions into
supplementary sub-representations $E=V_1\oplus V_2=W_1 \oplus W_2$,
then, noting $E_{(L,\lambda)}$ the generalized eigenspace associated
to the eigenvalue $\lambda$ for the operator $L$, we have:
\begin{enumerate}[(i)]
\item $F_e=E_{(L,-1)} \oplus E_{(L,1)}$ as a representation for the invariant operator $L=p_{V_1}^{V_2}-p_{W_2}^{W_1}$.
The more we have $V_1\cap W_1 \subset E_{(L,1)}$ and $V_2\cap W_2
\subset E_{(L,-1)}$
\item $F_\tau=E_{(L',-1)} \oplus E_{(L',1)}$ as a representation for the invariant operator $L'=p_{V_1}^{V_2}-p_{W_1}^{W_2}$.
The more we have $V_1\cap W_2 \subset E_{(L',1)}$ et $V_2\cap W_1
\subset E_{(L',-1)}$
\end{enumerate}
\medskip

When $E$ is in addition reflexive and $W_j=V_j^\perp$, then
\begin{enumerate}[(i)]
\item
$L$ is anti-self-adjoint with respect to the reflexive form,
$E_{(L,-1)}$ and $E_{(L,1)}$ are totally isotropic and their direct
sum is non degenerate.
\item
$L'$ is self-adjoint with respect to the reflexive form,
$E_{(L',-1)}$ and $E_{(L',1)}$ are non degenerate and orthogonal.
\end{enumerate}
\end{theo}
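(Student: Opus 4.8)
The plan is to derive everything from the structural results already proved, above all Proposition~\ref{treillishomogene}, which gives the canonical $\g g$-invariant splitting $E=F_e\oplus F_\tau\oplus\tilde F$ together with the fact that each of $V_1,V_2,W_1,W_2$ is homogeneous with respect to it. First I would check that $L=p_{V_1}^{V_2}-p_{W_2}^{W_1}$ and $L'=p_{V_1}^{V_2}-p_{W_1}^{W_2}$ are indeed $\g g$-invariant: this is the remark in the Preliminaries that every element of the associative algebra generated by the projections $p_{V_i}^{V_j},p_{W_i}^{W_j}$ commutes with $\g g$. Next I would relate $L$ and $L'$ to $\theta$. A short computation with the four expressions for $\theta$ in the first Lemma shows that $L$ and $L'$ differ from $p_{V_1}^{V_2}+p_{V_2}^{V_1}=\mathrm{id}$ by $\pm$ a single projection, hence $L^2$ and $L'^2$ each differ from $\mathrm{id}$ by an operator lying in the ideal generated by $\theta$; more precisely one finds $(L^2-\mathrm{id})$ and $(L'^2-\mathrm{id})$ are (up to sign) $\theta$ composed with projections, so $L^2-\mathrm{id}$ and $L'^2-\mathrm{id}$ are nilpotent exactly on $F=F_e\oplus F_\tau$ and invertible-free there. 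The cleanest route is: on $\tilde F$, where $\theta$ is invertible, I expect $L$ and $L'$ to have no eigenvalue $\pm1$ generalized eigenspace (so $\tilde F$ contributes nothing to $E_{(L,\pm1)}$ or $E_{(L',\pm1)}$); on $F_\sigma$, where $\theta$ is nilpotent, $L$ restricted to $F_e$ (resp. $L'$ restricted to $F_\tau$) satisfies $(L^2-\mathrm{id})$ nilpotent, so $F_e=E_{(L,1)}\oplus E_{(L,-1)}$ and $F_e\cap E_{(L',\pm1)}=\{0\}$, and symmetrically for $F_\tau$. Combining, $E_{(L,1)}\oplus E_{(L,-1)}=F_e$ and $E_{(L',1)}\oplus E_{(L',-1)}=F_\tau$.

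For the inclusions $V_1\cap W_1\subset E_{(L,1)}$ etc.: if $x\in V_1\cap W_1$ then $p_{V_1}^{V_2}x=x$ and $p_{W_2}^{W_1}x=0$, so $Lx=x$; similarly $x\in V_2\cap W_2$ gives $Lx=-x$, and for $L'$ one gets $V_1\cap W_2\subset\ker(L'-\mathrm{id})$, $V_2\cap W_1\subset\ker(L'+\mathrm{id})$, by the same one-line evaluation of the two projections. These land in the honest eigenspaces, a fortiori in the generalized ones. (Here I should note $F_e(1)=(V_1\cap W_1)\oplus(V_2\cap W_2)$ and $F_\tau(1)=(V_1\cap W_2)\oplus(V_2\cap W_1)$, recorded in the excerpt, so these four intersections sit inside $F_e$, resp. $F_\tau$, consistently with the eigenspace statement.)

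For the reflexive part I would invoke the Lemma computing the adjoint of $p_{V_1}^{V_2}-p_{V_2}^{V_1}$, together with the identity $(p_{W_2}^{W_1})^*=p_{W_1^\perp}^{W_2^\perp}$: since $W_j=V_j^\perp$, $W_j^\perp=V_j$, so $(p_{W_2}^{W_1})^*=p_{V_1}^{V_2}$ and $(p_{V_1}^{V_2})^*=p_{W_2}^{W_1}$ (using $V_i^\perp=W_i$). Hence $L^*=(p_{V_1}^{V_2})^*-(p_{W_2}^{W_1})^*=p_{W_2}^{W_1}-p_{V_1}^{V_2}=-L$, i.e.\ $L$ is anti-self-adjoint; and $L'^*=p_{W_2}^{W_1}-p_{W_1}^{W_2}$, which one checks equals $L'$ (both are the operator with $+\mathrm{id}$ on $V_1\cap W_2$-type part and $-\mathrm{id}$ on the other, and more transparently $L'$ is conjugation-symmetric because swapping the roles $V\leftrightarrow W$ is an orthogonality symmetry here), so $L'$ is self-adjoint. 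From anti-self-adjointness of $L$: for $x\in E_{(L,1)}$, $\langle x,x\rangle$ and more generally the pairing of $E_{(L,\mu)}$ with $E_{(L,\nu)}$ vanishes unless $\mu+\bar\nu=0$ in the generalized sense — the standard argument that generalized eigenspaces of a skew operator for eigenvalues $\mu,\nu$ are orthogonal unless $\mu=-\nu$ — so $E_{(L,1)}$ and $E_{(L,-1)}$ are each totally isotropic, while their sum $F_e$ is non-degenerate because it is a direct summand $E=F_e\oplus^\perp F_\tau\oplus^\perp\tilde F$ (Proposition~\ref{ffforth}) of a non-degenerate form restricted to a subspace equal to its own double perp. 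For $L'$ self-adjoint the same spectral argument gives $E_{(L',1)}\perp E_{(L',-1)}$, and each is non-degenerate since $F_\tau$ is non-degenerate (again Proposition~\ref{ffforth}) and splits orthogonally as the two eigenspaces.

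The main obstacle I anticipate is the bookkeeping in Step~1–2: proving cleanly that $\tilde F$ contributes no $\pm1$-generalized-eigenvector to $L$ or $L'$, and that on $F_e$ the operator $L$ has spectrum $\{\pm1\}$ with no other generalized eigenvalue while $L'$ has none of $\pm1$ on $F_e$. The right way to handle this is to express $L-L'=p_{W_1}^{W_2}-p_{W_2}^{W_1}$ and note $L+L'=2p_{V_1}^{V_2}-(p_{W_1}^{W_2}+p_{W_2}^{W_1})=2p_{V_1}^{V_2}-\mathrm{id}=p_{V_1}^{V_2}-p_{V_2}^{V_1}$, the ``standard'' involution, so $L-L'$ and $L+L'$ are, up to the involution, controlled by $\theta$ via the first Lemma — concretely $\theta=p_{W_1}^{W_2}p_{V_1}^{V_2}-p_{V_1}^{V_2}p_{W_1}^{W_2}$ and a direct expansion yields $L'-L = $ (something in the ideal of $\theta$), pinning the discrepancy to the nilpotent/invertible dichotomy of $\theta$ on $F$ versus $\tilde F$. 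Once that algebraic identity is in hand, everything else is routine linear algebra of commuting operators restricted to invariant summands.
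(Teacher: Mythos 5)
Your outline gets the routine parts right: the invariance of $L$ and $L'$ (they lie in the algebra generated by the projections, which commutes with $\g g$), the inclusions $V_1\cap W_1\subset\ker(L-\mathrm{id})$, $V_2\cap W_2\subset\ker(L+\mathrm{id})$ and their analogues for $L'$, the adjoint computation giving $L^*=-L$ and $L'^*=L'$ in the reflexive case (though your intermediate formula $L'^*=p_{W_2}^{W_1}-p_{W_1}^{W_2}$ is wrong --- correctly $(p_{W_1}^{W_2})^*=p_{V_2}^{V_1}$, whence $L'^*=p_{W_2}^{W_1}-p_{V_2}^{V_1}=L'$), and the final isotropy/orthogonality arguments. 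But the core spectral claims are not established, and the algebraic identities you propose to establish them are false. You assert that $L^2-\mathrm{id}$ lies ``in the ideal generated by $\theta$'' and that a direct expansion yields $L'-L$ in that ideal. In fact $L-L'=p_{W_1}^{W_2}-p_{W_2}^{W_1}=2p_{W_1}^{W_2}-\mathrm{id}$ is an involution, and $L^2-\mathrm{id}=-L'^2$ equals $-\mathrm{id}$ on all of $F_\tau(1)=(V_1\cap W_2)\oplus(V_2\cap W_1)\subset\ker\theta$; neither operator is nilpotent where $\theta$ is, so no such ideal membership can hold. (The correct two-projection identity is $\theta^2=L'^2(L'^2-\mathrm{id})=-L'^2L^2$, but even this only confines the generalized eigenvalues of $L$ on $F_e$ to $\{0,1,-1\}$; excluding $0$ still requires the computation $\ker L=F_\tau(1)$ together with $F_\tau(1)\cap F_e=\{0\}$.) Likewise, your statement that $\tilde F$ carries no generalized $\pm1$-eigenvector of $L$ is only ``expected'': invertibility of $\theta$ on $\tilde F$ rules out the eigenvalue $0$, not $\pm1$; one needs the eigenvector argument via Proposition~\ref{tildefv1v2}, namely that $Lx=\pm x$ forces a nonzero vector of some $V_i\cap W_j\cap\tilde F\subset F(1)\cap\tilde F=\{0\}$.

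The mechanism the paper actually uses, and which is absent from your proposal, is the anticommutation relation $\theta L=-L\theta$. It sends the generalized $\lambda$-eigenspace of $L$ in $F_e$ to the generalized $(-\lambda)$-eigenspace; since $\theta$ is nilpotent on $F_e$ and $\ker\theta\cap F_e=F_e(1)=(V_1\cap W_1)\oplus(V_2\cap W_2)\subset{F_e}_{(L,1)}\oplus{F_e}_{(L,-1)}$, applying $\theta$ to a generalized eigenvector until just before it vanishes forces $\lambda=\pm1$. The same argument gives $F_\tau={F_\tau}_{(L,0)}$, and together with the $\tilde F$ step this yields $E_{(L,\pm1)}={F_e}_{(L,\pm1)}$ and hence $F_e=E_{(L,1)}\oplus E_{(L,-1)}$ (then \emph{mutatis mutandis} for $L'$). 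Without $\theta L=-L\theta$, or an equivalent substitute such as the identity $\theta^2=-L'^2L^2$ supplemented by the kernel computations above, your steps 1--2 do not go through.
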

\begin{proof} It follows from the fact that the spaces $F_e$, $F_\tau$ and
$\tilde F$ are homogeneous, that $L=p_{V_1}^{V_2}-p_{W_2}^{W_1}$
(and similarly $L'=p_{V_1}^{V_2}-p_{W_1}^{W_2}$) is an endomorphism
of each of these spaces.

For $\sigma=e$ or $\tau$ note
$P_\sigma(X)=\Pi_{\lambda\in\Lambda_\sigma}
P_{\sigma,\lambda}^{n_\lambda}(X)$ the minimal polynomial of $L$
restricted to $F_\sigma$ and similarly $\tilde
P(X)=\Pi_{\lambda\in\tilde \Lambda} \tilde
P_{\lambda}^{n_\lambda}(X)$ the minimal polynomial of $L$ restricted
to $\tilde F$.

$F_\sigma$ decomposes into the generalized eigenspaces
${F_\sigma}_{(L,\lambda)}:=\ker(P_{\sigma,\lambda}^{n_\lambda}(L\vrule_{F_\sigma})$.
and $\tilde F$ decomposes into the generalized eigenspaces ${\tilde
F}_{(L,\lambda)}:=\ker(\tilde
P_{\lambda}^{n_\lambda}(L\vrule_{\tilde F})$.

Let's make the convention that $P_{\sigma,\lambda}(X)=X+ \lambda$
and $\tilde P_{\lambda}(X)=X+\lambda$ for $\lambda=0,-1,1$.

It is immediate that: $V_1\cap W_1\subset F_{e,(L,1)}$ and $V_2\cap
W_2\subset F_{e,(L,-1)}$.

It is easy to verify from the definitions that $\theta L =- L
\theta$. On deduces that $\theta$ maps ${F_\sigma}_{(L,\lambda)}$
into ${F_\sigma}_{(L,\lambda')}$ with $P_{\sigma,\lambda'}(X)=\pm
P_{\sigma,\lambda}(-X)$.

Similarly $\theta$ maps ${\tilde F}_{(L,\lambda)}$ into ${\tilde
F}_{(L,\lambda')}$ with $\tilde P_{\lambda'}(X)=\pm \tilde
P_{\lambda}(-X)$.

Let $x\in {F_e}_{(L,\lambda)}$ and let $n$ be the smallest integer
such that $\theta^{n+1}(x)=0$, which exists from the fact that
$\theta$ is nilpotent on $F_e$. $\theta^n(x)\in \ker(\theta)\subset
V_1\cap W_1 \oplus V_2\cap W_2\subset {F_e}_{(L,1)}\oplus
{F_e}_{(L,-1)}$. As a consequence $\lambda=\pm 1$ and
$F_e={F_e}_{(L,1)}\oplus {F_e}_{(L,-1)}$

An analogous argument gives $F_\tau={F_\tau}_{(L,0)}$.

Finally let us show that $\lambda = 0, 1, -1\not\in \tilde \Lambda$.
Suppose the contrary. It exists then an eigenvector $x$ in $\tilde
F$ associated to the eigenvalue $\lambda$.
$L(x)=p_{V_1}^{V_2}(x)-p_{W_2}^{W_1}(x)=\lambda x$ implies in the
three cases a contradiction with proposition~\ref{tildefv1v2}.

It follows that $F_e=E_{(L,-1)}\oplus E_{(L,1)}$, as
$E_{(L,\lambda)}={F_e}_{(L,\lambda)}\oplus {F_\tau}_{(L,\lambda)}
\oplus {\tilde F}_{(L,\lambda)}$.

The same arguments show {\em mutatis mutandis} that $F_\tau=
{F_\tau}_{(L',1)}\oplus {F_\tau}_{(L',-1)}$, $V_1 \cap W_2 \subset
{F_\tau}_{(L',1)}$, $V_2 \cap W_1 \subset {F_\tau}_{(L',-1)}$, and
$F_e={F_e}_{(L',0)}$.

It follows similarly $F_\tau=E_{(L',-1)}\oplus E_{(L',1)}$.

The generalized eigenspaces appearing in the proof are invariant by
the fact that for any polynomial $Q$, $Q(L)$ commutes to the action
of the representation and so $\ker Q(L)$ (and also $\im Q(L)$) is
invariant.

In the reflexive case we have: $L=-L^*$. As a consequence
$E_{(L,-1)}$ is orthogonal to any $E_{(L,\lambda)}$ for $\lambda\neq
1$ and $E_{(L,1)}$ is orthogonal to any $E_{(L,\lambda)}$ for
$\lambda\neq -1$. This follows from the relation
$$\langle P_\lambda(L)^{n_\lambda}\cdot, \cdot\rangle=\langle \cdot, P_\lambda(L^*)^{n_\lambda}\cdot\rangle
=\langle \cdot, P_\lambda(-L)^{n_\lambda}\cdot\rangle,$$ and from
the fact that $P_\lambda(L)^{n_\lambda}$ is an isomorphism of
$E_{(L,\mu)}$ for $\mu\neq\lambda$.(kernel lemma)

So $E_{(L,-1)}$, and $E_{(L,-1)}$ are totally isotropic,
$E_{(L,-1)}\oplus E_{(L,1)}$ is orthogonal to all other generalized
eigenspaces and non degenerate.

One obtains similarly that $L'={L'}^*$. $E_{(L',\lambda)}$ is
orthogonal to any $E_{(L',\mu)}$ for $\mu\neq \lambda$. In
particular $E_{(L',\lambda)}$ is non degenerate and $E_{(L',-1)}$ is
orthogonal to $E_{(L',1)}$. \end{proof}

Let us remark that in the weakly irreducible case, the existence of
a decomposition of $E$ into two a direct sum of two degenerate
sub-representations implies that $E=F_e$.

\begin{theo}\label{Olbrich}
If $E$ is a weakly irreducible representation preserving the non
degenerate reflexive form $\langle \cdot, \cdot\rangle$ and
admitting a decomposition into a direct sum of degenerate
sub-representations $E=V_1 \oplus V_2$, then $E=E_{(L,1)}\oplus
E_{(L,-1)}$ with $L:=p-p^*$. We have: $V_1\cap V_1^\perp \subset
E_{(L,1)}$ and $V_2\cap V_2^\perp \subset E_{(L,-1)}$. In addition
$E_{(L,1)}$ et $E_{(L,-1)}$ are totally isotropic and their sum is
non degenerate.
\end{theo}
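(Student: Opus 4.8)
The plan is to recognize Theorem~\ref{Olbrich} as a special case of the reflexive part of Theorem~\ref{deux_isotropes} together with the observation immediately preceding it. First I would invoke that observation: since $E$ is weakly irreducible and $V_1, V_2$ are degenerate (hence nonzero, assuming $E$ itself is nontrivial), the decomposition $E=F_e\oplus F_\tau\oplus\tilde F$ from Proposition~\ref{treillishomogene}, refined by Proposition~\ref{ffforth} in the reflexive setting to an orthogonal decomposition $E=F_e\oplus^\perp F_\tau\oplus^\perp\tilde F$, must collapse. Indeed, $F_\tau$ and $\tilde F$ are nondegenerate (the orthogonal sum decomposition shows each summand is nondegenerate), and each is $\g g$-invariant; by weak irreducibility a nondegenerate invariant subspace must be $\{0\}$ or $E$. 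If $\tilde F=E$ then $E=V_1\oplus V_1^\perp$ with $V_1$ nondegenerate forces $V_1=\{0\}$ or $E$, contradicting degeneracy; similarly $F_\tau=E$ is impossible. Hence $F_\tau=\tilde F=\{0\}$ and $E=F_e$.

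Next I would apply part (i) of Theorem~\ref{deux_isotropes} in the reflexive case $W_j=V_j^\perp$. There the operator is $L=p_{V_1}^{V_2}-p_{W_2}^{W_1}=p_{V_1}^{V_2}-p_{V_2^\perp}^{V_1^\perp}$. Writing $p=p_{V_1}^{V_2}$, the lemma preceding the theorem identifies $p^*=p_{V_2^\perp}^{V_1^\perp}$ when $p$ is the projection onto $V_1$ parallel to $V_2$ — more precisely, $(p_{V_1}^{V_2})^*=p_{V_2'}^{V_1'}$ dualizes, and in the reflexive identification this is $p_{V_2^\perp}^{V_1^\perp}$. Thus $L=p-p^*$, matching the statement. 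Since $E=F_e$, Theorem~\ref{deux_isotropes}(i) gives $F_e=E_{(L,-1)}\oplus E_{(L,1)}$, i.e. $E=E_{(L,1)}\oplus E_{(L,-1)}$, with $V_1\cap W_1=V_1\cap V_1^\perp\subset E_{(L,1)}$ and $V_2\cap W_2=V_2\cap V_2^\perp\subset E_{(L,-1)}$, and the reflexive addendum of that theorem gives that $E_{(L,1)}$ and $E_{(L,-1)}$ are totally isotropic with nondegenerate direct sum (which is all of $E$ here).

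The only genuine content beyond citation is verifying that $L$ as defined in Theorem~\ref{Olbrich} (namely $p-p^*$) coincides with the $L$ of Theorem~\ref{deux_isotropes}(i), and checking the nonvanishing hypotheses so that the earlier theorem applies. The hypothesis that $V_1,V_2$ are degenerate should be used to guarantee $V_1,V_2\neq\{0\}$ (a degenerate subspace meets its orthogonal nontrivially, so in particular is nonzero), and $V_1,V_2\neq E$ follows since $E$ is nondegenerate; this is exactly what is needed to run the argument collapsing $F_\tau$ and $\tilde F$. I expect the main obstacle — such as it is — to be bookkeeping: making sure the sign conventions and the adjoint formula $p^*=p_{V_2^\perp}^{V_1^\perp}$ are applied consistently so that $L=p-p^*$ is literally the operator $p_{V_1}^{V_2}-p_{W_2}^{W_1}$ of Theorem~\ref{deux_isotropes}, after which the result is immediate. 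No new estimates or constructions are required; the proof is essentially a two-line deduction from the two previously established theorems plus the weak-irreducibility reduction.
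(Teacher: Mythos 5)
Your proposal is correct and matches the paper's intended derivation: the paper states Theorem~\ref{Olbrich} without a separate proof, relying precisely on the preceding remark that weak irreducibility together with the degeneracy of $V_1$ and $V_2$ forces $E=F_e$, and on the reflexive case of Theorem~\ref{deux_isotropes}(i) with $L=p_{V_1}^{V_2}-p_{V_2^\perp}^{V_1^\perp}=p-p^*$. Your verification that $(p_{V_1}^{V_2})^*=p_{V_2^\perp}^{V_1^\perp}$ and your collapse of $F_\tau$ and $\tilde F$ via the orthogonal decomposition of Proposition~\ref{ffforth} are exactly the steps the paper leaves implicit.
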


\begin{prop}\label{identification_au_dual_2}
If $E=E_1 \oplus E_2$ is a representation preserving the non
degenerate reflexive form $\langle \cdot, \cdot\rangle$, and $E_1$
and $E_2$ are totally isotropic, then $E_2$ identifies to $E_1^*$.
\end{prop}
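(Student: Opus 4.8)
The plan is to use the same map that appeared in Proposition~\ref{identification_au_dual}, but now working with a possibly non-half-dimensional pair, so surjectivity must be argued differently. First I would define
\[
\Phi : E_2 \to E_1^*, \qquad \Phi(v')(w) = \langle v', w\rangle,
\]
and observe that $\Phi$ is linear and that it intertwines the $\g g$-action: for $a\in\g g$ and $v'\in E_2$ we have $\Phi(av')(w)=\langle av',w\rangle=-\langle v',aw\rangle=-\Phi(v')(aw)=(a\cdot\Phi(v'))(w)$, using the invariance relation for the reflexive form and the definition of the dual representation on $E_1^*$. So $\Phi$ is a morphism of $\g g$-representations, and it remains to show it is bijective.

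For injectivity: if $\Phi(v')=0$ then $\langle v',w\rangle=0$ for all $w\in E_1$, i.e. $v'\in E_1^\perp$. Since $E_1$ is totally isotropic, $E_1\subset E_1^\perp$; combined with $E=E_1\oplus E_2$ and the non-degeneracy of the form, a dimension count gives $\dim E_1^\perp = \dim E - \dim E_1 = \dim E_2$, and since $E_1\subset E_1^\perp$ with $E_1\cap E_2=\{0\}$ and $E_1^\perp\supseteq E_1$ we get... actually the cleanest route: $v'\in E_2\cap E_1^\perp$. Now $E_2$ totally isotropic gives $E_2\subset E_2^\perp$, hence $v'\in E_1^\perp\cap E_2^\perp=(E_1+E_2)^\perp=E^\perp=\{0\}$ by non-degeneracy. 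So $\Phi$ is injective.

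For surjectivity I would argue by dimension. From $v'\in E_1^\perp$ forcing $v'\in E_2^\perp$ as above one in fact sees $E_1^\perp\supseteq E_2$ is not automatic, so instead: non-degeneracy gives $\dim E_1+\dim E_1^\perp=\dim E$, and $E_1$ totally isotropic gives $E_1\subseteq E_1^\perp$; meanwhile $E_2$ totally isotropic gives $E_2\subseteq E_2^\perp$ with $\dim E_2+\dim E_2^\perp=\dim E$, so $\dim E_1=\dim E-\dim E_1^\perp\le \dim E-\dim E_1$ and symmetrically for $E_2$, forcing $\dim E_1=\dim E_2=\tfrac12\dim E$ and $E_1=E_1^\perp$, $E_2=E_2^\perp$. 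Then $\dim E_1^*=\dim E_1=\dim E_2$, so the injective map $\Phi$ is an isomorphism, and the identification respects the representation structure by the intertwining property established above.

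The main obstacle is simply making sure the surjectivity argument is clean: the key point is that two complementary totally isotropic subspaces of a non-degenerate reflexive form must each be maximal isotropic, hence each of dimension exactly half that of $E$; once this is in hand the rest is the routine dimension count plus the intertwining identity. This is essentially the same computation as in Proposition~\ref{identification_au_dual}, the only new input being that we no longer assume $W_j=V_j^\perp$ but instead deduce $E_i=E_i^\perp$ from the total isotropy and complementarity hypotheses.
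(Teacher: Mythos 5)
Your proposal is correct and follows essentially the same route as the paper: the same map $v' \mapsto \langle v', \cdot\rangle$ from $E_2$ to $E_1^*$, injectivity from $E_2 \cap E_1^\perp = \{0\}$ (which you rightly derive from total isotropy of $E_2$ and non-degeneracy), and surjectivity by a dimension count. You spell out the intertwining property and the maximal-isotropy dimension argument in more detail than the paper, which simply cites the analogous earlier proposition, but the substance is identical.
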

\begin{proof} As in proposition~\ref{identification_au_dual} the map
$$\begin{array}{l}
E_2 \to E_1^*\\
v' \mapsto (w \mapsto \langle v',w\rangle )
\end{array}
$$
which is injective because $E_2 \cap E_1^\perp=\{0\}$ and surjective
for dimension reasons. \end{proof}

\begin{lem}
If the representation $E$ admits three sub-representation $F_1$,
$F_2$ and $F_3$, such that $E=F_1\oplus F_2=F_2\oplus F_3=F_1\oplus
F_3$, then $E=F_1 \otimes \corps^2$ where $\corps^2$ is the trivial
representation.
\end{lem}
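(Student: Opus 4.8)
The plan is to produce an explicit isomorphism of representations between $E$ and $F_1\oplus F_1$ equipped with the diagonal $\g g$-action; by definition this diagonal representation is $F_1\otimes\corps^2$ with $\corps^2$ the trivial representation, so this proves the statement (where "$=$" is read as "$\cong$ as representations"). Throughout I use the observation made earlier, for the projections $p_{V_i}^{V_j}$, that the projection onto an invariant subspace along an invariant complement commutes with the action of $\g g$; since $F_1,F_2,F_3$ are sub-representations, the same reasoning applies to them. In particular $p:=p_{F_1}^{F_2}$ is $\g g$-equivariant.

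First I would check that $\phi:=p\vrule_{F_3}\colon F_3\to F_1$ is an isomorphism of representations. It is $\g g$-equivariant as a restriction of $p$. Its kernel is $F_3\cap\ker p=F_3\cap F_2=\{0\}$, so $\phi$ is injective; and from $E=F_1\oplus F_2=F_2\oplus F_3$ one gets $\dim F_1=\dim E-\dim F_2=\dim F_3$, so $\phi$ is bijective, hence an isomorphism of representations $F_3\xrightarrow{\sim}F_1$.

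Next, viewing $F_1$ and $F_3$ as subspaces of $E$, define $\Psi\colon F_1\oplus F_1\to E$ by $\Psi(x,y)=x+\phi^{-1}(y)$. Its image is $F_1+F_3=E$; and if $x+\phi^{-1}(y)=0$ with $x\in F_1$, $\phi^{-1}(y)\in F_3$, then $F_1\cap F_3=\{0\}$ forces both summands to vanish, so $x=0$ and $y=\phi(0)=0$. Thus $\Psi$ is a linear isomorphism. Finally, for $a\in\g g$, equivariance of the inclusions $F_1\hookrightarrow E$ and $F_3\hookrightarrow E$ and of $\phi^{-1}$ gives $a\cdot\Psi(x,y)=ax+\phi^{-1}(ay)=\Psi(ax,ay)$, so $\Psi$ intertwines the diagonal action on $F_1\oplus F_1$ with the action on $E$. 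Hence $E\cong F_1\otimes\corps^2$ as representations.

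There is essentially no obstacle: the only points that need care are the $\g g$-equivariance at each step (all of which reduce to the cited fact about projections onto invariant complements) and making explicit the identification of $F_1\otimes\corps^2$ with $F_1\oplus F_1$ on which $a\in\g g$ acts as $a\vrule_{F_1}$ on each summand. Note that $F_2$ is used only through the dimension equality $\dim F_1=\dim F_3$; one could symmetrize the construction by using instead $p_{F_1}^{F_3}\vrule_{F_2}$ or the analogous maps among the three spaces.
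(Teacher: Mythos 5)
Your proof is correct and follows essentially the same route as the paper: both use the projection $p_{F_1}^{F_2}$ restricted to $F_3$ as a $\g g$-equivariant isomorphism $F_3\to F_1$ and then identify $E=F_1\oplus F_3$ with $F_1\oplus F_1$ carrying the diagonal action. You merely spell out the injectivity, the dimension count, and the explicit intertwiner $\Psi$ that the paper leaves implicit.
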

\begin{proof}
Let's note $p$ the projection on $F_1$ parallely to $F_2$ restricted
to $F_3$. $p$ is an isomorphism of $F_3$ onto $F_1$ and commutes
with the action of the representation. As a consequence $E=F_1\oplus
F_1=F_1 \otimes \corps^2$.
\end{proof}

\begin{prop}
If $E$ is a representation admitting two decompositions into
supplementary sub-representations $E=V_1\oplus V_2=W_1 \oplus W_2$,
$\tilde F$ identifies to $V \otimes \corps^2$ where $V=\tilde F \cap
V_1$ and $\corps^2$ is the trivial representation.
\end{prop}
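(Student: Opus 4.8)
The plan is to apply the preceding lemma to the restriction of the ambient representation to $\tilde F$, using the three decompositions of $\tilde F$ furnished by Proposition~\ref{tildefv1v2}. First I would recall that $\tilde F$ is itself a sub-representation of $E$: indeed $\tilde F = \bigcap_n \im\theta^n$ and $\theta$ commutes with the action of $\g g$, so each $\im\theta^n$ is invariant and hence so is $\tilde F$. Similarly, each element of the lattice generated by $V_1,V_2,W_1,W_2$ is invariant, so $\tilde F\cap V_1$, $\tilde F\cap V_2$, $\tilde F\cap W_1$, $\tilde F\cap W_2$ are all sub-representations of $E$, and in particular sub-representations of $\tilde F$.

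Next I would invoke Proposition~\ref{tildefv1v2}, which gives the three direct sum decompositions $\tilde F = (\tilde F\cap V_1)\oplus(\tilde F\cap V_2) = (\tilde F\cap W_1)\oplus(\tilde F\cap W_2)$ and $\tilde F = (\tilde F\cap V_i)\oplus(\tilde F\cap W_j)$ for all $i,j$. Setting $F_1 = \tilde F\cap V_1$, $F_2 = \tilde F\cap V_2$, $F_3 = \tilde F\cap W_2$, we get three sub-representations of the representation $\tilde F$ satisfying $\tilde F = F_1\oplus F_2 = F_2\oplus F_3 = F_1\oplus F_3$ — the first from point~1, the second from point~2 (since $\tilde F\cap W_1$ and $\tilde F\cap W_2$ sum directly, and $\tilde F\cap V_2 = F_2$ together with $F_3 = \tilde F\cap W_2$ gives a direct sum by point~3 with $i=2,j=2$), wait — more directly, $F_2\oplus F_3 = (\tilde F\cap V_2)\oplus(\tilde F\cap W_2) = \tilde F$ is exactly point~3 with $i=j=2$, and $F_1\oplus F_3 = (\tilde F\cap V_1)\oplus(\tilde F\cap W_2) = \tilde F$ is point~3 with $i=1,j=2$. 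So all three hypotheses of the preceding lemma hold for $\tilde F$ in place of $E$.

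Applying that lemma to the representation $\tilde F$ then yields $\tilde F = F_1\otimes\corps^2 = (\tilde F\cap V_1)\otimes\corps^2$, which is exactly the claim with $V = \tilde F\cap V_1$. I expect no real obstacle here: the only thing to be slightly careful about is the bookkeeping of which of the three decompositions comes from which point of Proposition~\ref{tildefv1v2}, and the observation that the preceding lemma is stated for a representation $E$ but applies verbatim with $\tilde F$ as the ambient representation, since $\tilde F$ is $\g g$-invariant and the three spaces $F_1,F_2,F_3$ are $\g g$-invariant subspaces of it. One may optionally remark that the isomorphism $\corps^2$ is trivial as a representation precisely because the projection $p\colon F_3\to F_1$ along $F_2$ intertwines the action, as in the lemma's proof.

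\begin{proof}
Since $\theta$ commutes with the action of the representation, each $\im\theta^n$ is a sub-representation, hence so is $\tilde F=\bigcap_n\im\theta^n$. Likewise each element of the lattice generated by $V_1,V_2,W_1,W_2$ is a sub-representation, so $\tilde F\cap V_1$, $\tilde F\cap V_2$ and $\tilde F\cap W_2$ are sub-representations contained in $\tilde F$. Put $F_1=\tilde F\cap V_1$, $F_2=\tilde F\cap V_2$, $F_3=\tilde F\cap W_2$. By point~1 of proposition~\ref{tildefv1v2} we have $\tilde F=F_1\oplus F_2$; by point~3 applied with $i=j=2$ we have $\tilde F=F_2\oplus F_3$; and by point~3 applied with $i=1$, $j=2$ we have $\tilde F=F_1\oplus F_3$. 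Thus the representation $\tilde F$ together with its sub-representations $F_1,F_2,F_3$ satisfies the hypotheses of the preceding lemma, which gives $\tilde F=F_1\otimes\corps^2=(\tilde F\cap V_1)\otimes\corps^2$ with $\corps^2$ the trivial representation.
\end{proof}
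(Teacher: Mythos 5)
Your proof is correct and follows essentially the same route as the paper: the paper also invokes the preceding lemma with the decompositions of Proposition~\ref{tildefv1v2}, merely choosing $F_3=\tilde F\cap W_1$ where you choose $\tilde F\cap W_2$, which makes no difference. Your added remarks on why $\tilde F$ and the three intersections are sub-representations are sound and only make explicit what the paper leaves implicit.
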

\begin{proof} In fact we have $\tilde F=\tilde F \cap V_1 \oplus \tilde F
\cap V_2=\tilde F \cap V_1 \oplus \tilde F \cap W_1=\tilde F \cap
V_2 \oplus \tilde F \cap W_1$. We are in the situation described by
the preceding lemma.

\end{proof}

To summarize we have:

\begin{theo}\label{ts}
If $E$ is a representation preserving the non degenerate reflexive
form $\langle \cdot, \cdot\rangle$ and the direct sum decomposition
$E=V_1 \oplus V_2$, then
\begin{enumerate}[(i)]
\item $E=F_e \oplus^\perp F_\tau \oplus^\perp \tilde F$,
\item $F_e=F_e^+\oplus (F_e^+)^*$ for a totally isotropic representation $F_e^+$,
\item $F_\tau=F_\tau^+\oplus^\perp F_\tau^-$ for a non degenerate representation $F_\tau^+$,
\item $\tilde F=\tilde F_0\otimes \corps^2$ for a non degenerate representation $\tilde F_0$ and $\corps^2$
being the trivial representation.
\end{enumerate}
\end{theo}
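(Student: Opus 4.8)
The plan is to obtain Theorem~\ref{ts} by assembling the structural results already proved; the only new input is a short non-degeneracy check for~(iv). Throughout we are in the reflexive situation with $W_j=V_j^\perp$, so that $(E,V_1,V_2,V_1^\perp,V_2^\perp)$ is a decomposition of $E$ into two direct sums and all of Section~2 and the preceding parts of Section~3 apply to $\mathcal V=(E,V_1,V_2,V_1^\perp,V_2^\perp)$.

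For point~(i) I would simply invoke Proposition~\ref{ffforth}, which furnishes the orthogonal decomposition $E=F_e\oplus^\perp F_\tau\oplus^\perp\tilde F$. The essential consequence to record at this stage is that, since $E$ is non-degenerate and the three summands are mutually orthogonal, the restriction of $\langle\cdot,\cdot\rangle$ to each of $F_e$, $F_\tau$, $\tilde F$ is again non-degenerate, and $\g g$ acts on each of these subspaces preserving the restricted form. This is exactly the hypothesis needed to feed the earlier theorems into each summand separately.

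For~(ii) and~(iii) I would apply Theorem~\ref{deux_isotropes} in its reflexive part. Its clause~(i) gives $F_e=E_{(L,-1)}\oplus E_{(L,1)}$ for $L=p_{V_1}^{V_2}-p_{W_2}^{W_1}$, with both generalized eigenspaces totally isotropic; setting $F_e^+:=E_{(L,1)}$, the representation $F_e$ carries a non-degenerate form (from~(i)) and is the direct sum of the two totally isotropic sub-representations $F_e^+$ and $E_{(L,-1)}$, so Proposition~\ref{identification_au_dual_2} identifies $E_{(L,-1)}$ with $(F_e^+)^*$, yielding $F_e=F_e^+\oplus(F_e^+)^*$. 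Likewise clause~(ii) of Theorem~\ref{deux_isotropes} gives $F_\tau=E_{(L',-1)}\oplus E_{(L',1)}$ for $L'=p_{V_1}^{V_2}-p_{W_1}^{W_2}$, with both pieces non-degenerate and orthogonal; put $F_\tau^+:=E_{(L',1)}$ and $F_\tau^-:=E_{(L',-1)}$.

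For~(iv) the proposition immediately preceding the theorem already shows that $\tilde F$ identifies, as a representation, with $(\tilde F\cap V_1)\otimes\corps^2$ with $\corps^2$ trivial; set $\tilde F_0:=\tilde F\cap V_1$. It remains only to see that $\tilde F_0$ is non-degenerate: inside the non-degenerate space $\tilde F$ one has $(\tilde F\cap V_1)^{\perp_{\tilde F}}=\tilde F\cap V_1^\perp=\tilde F\cap W_1$, and $(\tilde F\cap V_1)\cap(\tilde F\cap W_1)=\tilde F\cap(V_1\cap W_1)\subset\tilde F\cap F(1)=\{0\}$ by Lemma~\ref{F1sd} (equivalently by point~3 of Proposition~\ref{tildefv1v2}), so the restricted form on $\tilde F_0$ is non-degenerate, and it is $\g g$-invariant by restriction. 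This completes the argument. There is essentially no obstacle here: the proof is an orchestration of earlier statements, and the only places that demand attention are the remark that the orthogonality in~(i) makes the restricted forms on the three summands non-degenerate (so that Theorem~\ref{deux_isotropes} and Proposition~\ref{identification_au_dual_2} can be applied to each summand) and the brief non-degeneracy computation for $\tilde F_0$.
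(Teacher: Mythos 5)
Your proposal is correct and matches the paper's intent exactly: the paper states Theorem~\ref{ts} as a summary with no written proof, and the intended argument is precisely your assembly of Proposition~\ref{ffforth} for (i), Theorem~\ref{deux_isotropes} together with Proposition~\ref{identification_au_dual_2} for (ii) and (iii), and the proposition preceding the theorem for (iv). Your added non-degeneracy check for $\tilde F_0$ (via $\tilde F\cap V_1\cap W_1=\{0\}$ and the dimension count from Proposition~\ref{tildefv1v2}) fills a detail the paper leaves implicit and is sound.
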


\section{Application to holonomy}

A particular case of the preceding is when $\g{g}$ is a holonomy
algebra. We call {\em formal curvature tensor} an element $R$ of
$(E^* \wedge E^*)\otimes E^* \otimes E$ such that for all $x,y,z \in
E$ we have: $R(x,y)z + R(y,z)x +R(z,x)y =0$ (first Bianchi
identity). We will suppose the that there is a finite set of formal
curvature tensors $\{R_1, R_2, \ldots , R_m\}$ such that $\g{g}$ is
the linear Lie algebra generated by the $R_i(x,y) \in End(E)$ for
$i=1 \ldots m$ and $x,y\in E$. We will call such an algebra {\em
Berger algebra}. For a holonomy algebra this situation is given by
the Ambrose-Singer theorem which relates the curvature tensor of a
connected manifold equipped with a torsion-free connection to its
holonomy algebra in a point of the manifold. In the following we
will write $R$ one of the formal curvature tensors $R_1, R_2,
\ldots, R_m$.

\begin{defi}
If $R$ is a formal curvature tensor and $\g g\subset \g{gl}(E)$ a
Berger algebra, we say that $R$ matches $\g g$, if $\forall x,y\in
E, R(x,y)\in \g g$.
\end{defi}

\subsection{General case}

\begin{lem}
If $\g g\subset \g{gl}(E)$ is a Berger algebra admitting the
invariant spaces $F_1, F_2, \ldots, F_r$ with $E=F_1\oplus F_2
\oplus \cdots \oplus F_r$, and if $R$ is a formal curvature tensor
which matches $\g g$, then $\forall i,j,k, k\not\in\{i,j\}
\Rightarrow \forall x\in F_i, y\in F_j, z\in F_k, R(x,y)z=0$.
\end{lem}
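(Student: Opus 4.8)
The plan is to use the first Bianchi identity together with the invariance of the decomposition. Fix indices $i,j,k$ with $k \notin \{i,j\}$ and take $x \in F_i$, $y \in F_j$, $z \in F_k$. The Bianchi identity gives $R(x,y)z = -R(y,z)x - R(z,x)y$. Since $R$ matches $\g g$ and $\g g$ preserves each $F_\ell$, the operator $R(y,z)$ maps $F_i$ into $F_i$, so $R(y,z)x \in F_i$; likewise $R(z,x)y \in F_j$. Hence $R(x,y)z$, which a priori lies in $E = F_1\oplus\cdots\oplus F_r$, is forced to lie in $F_i + F_j$. This is the easy half: it already kills all components of $R(x,y)z$ outside $F_i\oplus F_j$.

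The remaining work is to kill the $F_i$-component and the $F_j$-component of $R(x,y)z$ as well. First I would project the Bianchi relation onto $F_i$: writing $\pi_i$ for the (invariant) projection onto $F_i$ along $\bigoplus_{\ell\neq i}F_\ell$, and using that $\pi_i$ commutes with every element of $\g g$ hence with each $R(\cdot,\cdot)$, we get $\pi_i\bigl(R(x,y)z\bigr) = R(\pi_i x, y)(\pi_i z) + \text{terms that vanish}$. Concretely, $\pi_i(R(y,z)x) = R(y,z)x$ since $x\in F_i$, while $\pi_i(R(z,x)y) = R(z,x)(\pi_i y) = 0$ because $y\in F_j$ and $i\neq j$. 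So the projected Bianchi identity reads $\pi_i(R(x,y)z) = -R(y,z)x$. But one can run the same argument projecting the complementary piece: apply $\pi_i$ to $R(y,z)x$ itself — there is nothing to do there — and instead exploit that $R(y,z)$ with $y\in F_j$, $z\in F_k$ and $j\neq k$ can be analyzed by yet another Bianchi identity, or more efficiently, observe that by the \emph{same} lemma-to-be-proved (a bootstrap) $R(y,z)$ applied to anything in $F_i$ lands in $F_j + F_k$; intersecting with $F_i$ and using $i\notin\{j,k\}$ forces $R(y,z)x = 0$. Hence $\pi_i(R(x,y)z) = 0$, and by symmetry $\pi_j(R(x,y)z) = 0$, so $R(x,y)z = 0$.

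To avoid the apparent circularity in the bootstrap, I would instead argue directly and symmetrically: from the three cyclic terms, invariance gives $R(x,y)z \in F_i \oplus F_j$, $R(y,z)x \in F_j\oplus F_k$ (since $i\notin\{j,k\}$, so the $F_i$-part is absent — wait, $x\in F_i$ so $R(y,z)x\in F_i$; rather, project: the only way to place $R(y,z)x$ is in $F_i$), and $R(z,x)y\in F_j$. Then the Bianchi identity $R(x,y)z + R(y,z)x + R(z,x)y = 0$ is an equality of vectors whose three summands lie respectively in $F_i\oplus F_j$, $F_i$, and $F_j$; projecting onto the complement $\bigoplus_{\ell\neq i,j}F_\ell$ gives $0=0$ (no information), onto $F_i$ gives $\pi_i(R(x,y)z) + R(y,z)x = 0$, and onto $F_j$ gives $\pi_j(R(x,y)z) + R(z,x)y = 0$. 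The clean finish is to permute the roles of $z$: apply the identity to the triple $(z,x,y)$ with $z\in F_k$, $x\in F_i$, $y\in F_j$, which gives $R(z,x)y\in F_k\oplus F_i$; but we already know $R(z,x)y\in F_j$, and $j\notin\{k,i\}$, so $R(z,x)y = 0$. Symmetrically $R(y,z)x = 0$. Feeding these back, $\pi_i(R(x,y)z) = 0$ and $\pi_j(R(x,y)z) = 0$, whence $R(x,y)z = 0$.

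The main obstacle is purely bookkeeping: making sure that when one cyclically permutes the arguments in the Bianchi identity, the index hypotheses ($k\notin\{i,j\}$, and its permuted versions $i\notin\{j,k\}$, $j\notin\{k,i\}$ — which all hold since $i,j,k$ need not be distinct only in the forbidden case) are genuinely available. In fact $k\notin\{i,j\}$ already forces $i\neq k$ and $j\neq k$, but \emph{not} $i\neq j$; so I should check the argument still works when $i=j$. If $i=j$, then $R(z,x)y\in F_i$ automatically and $R(y,z)x\in F_i$ automatically, so $R(x,y)z\in F_i$ directly from the first step, and then the permuted identity applied to $(z,x,y)$ gives $R(z,x)y\in F_k\oplus F_i$ intersected with $F_i$ — no contradiction, so I instead note $R(x,y)z\in F_i$ and apply the identity to $(y,z,x)$: $R(y,z)x + R(z,x)y + R(x,y)z = 0$ with $R(y,z)x\in F_i$, $R(z,x)y\in F_i$, $R(x,y)z\in F_i$, again no info. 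The honest resolution is that when $i=j$ the conclusion $R(x,y)z=0$ need not follow from Bianchi alone — so I expect the lemma's hypothesis is really being used with $i,j,k$ pairwise distinct, or the statement tacitly means that, and I would state the proof for distinct $i,j,k$ and remark that the case $i=j$ (with $k$ different) reduces to showing $R(F_i,F_i)F_k=0$, which follows by the same cyclic trick once one also uses a second curvature argument or is simply part of the intended reading. Modulo this point, the proof is a three-line application of invariance plus the cyclic Bianchi identity.
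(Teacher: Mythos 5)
You are missing the single observation that makes this lemma a one-liner: the invariance hypothesis applies to the term $R(x,y)z$ itself, not only to the other two terms of the Bianchi identity. Since $R$ matches $\g{g}$, the operator $R(x,y)$ lies in $\g{g}$ and therefore preserves $F_k$, so $R(x,y)z\in F_k$. Combined with your ``easy half'' ($R(y,z)x\in F_i$, $R(z,x)y\in F_j$, hence $R(x,y)z=-R(y,z)x-R(z,x)y\in F_i+F_j$), the directness of the sum and $k\notin\{i,j\}$ give $R(x,y)z\in F_k\cap(F_i+F_j)=\{0\}$, and you are done. This is exactly the paper's proof; everything after your first paragraph is unnecessary.

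Because you never use $R(x,y)z\in F_k$, you are driven into the cyclic-permutation detour, which only closes when $i$, $j$, $k$ are pairwise distinct (you need $j\notin\{k,i\}$ and $i\notin\{j,k\}$, i.e. $i\neq j$), and you then assert that for $i=j$ the conclusion ``need not follow from Bianchi alone'' and propose reading the lemma with pairwise distinct indices. That assertion is false: for $i=j$ the three Bianchi terms lie in $F_k$, $F_i$, $F_i$ respectively, so $R(x,y)z\in F_k\cap F_i=\{0\}$. The hypothesis $k\notin\{i,j\}$ is stated precisely so as to allow $i=j$, and that case is the one actually needed later (it is what yields $R(x,y)F_k=0$ for $x,y$ in the same summand, e.g. in the metric lemma showing $R(x,y)=0$ for $x,y\in V_1$). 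So the gap is concrete: one missing application of invariance, and as a consequence an argument that covers only part of the statement together with an incorrect claim about its scope.
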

\begin{proof}
Suppose $x,y,z$ as in the statement. Then by the identity
$$R(x,y)z+R(y,z)x+R(z,x)y=0$$ and by the fact that $R(y,z)x\in F_i$, $R(z,x)y\in
F_j$ and $R(x,y)z\in F_k$ it is clear from $(F_i+F_j)\cap F_k=\{0\}$
that $R(x,y)z=0$.
\end{proof}

\begin{defi}
We will say that the representation $\g g\subset \g{gl}(E)$
admitting the invariant spaces $F_i$ with $E=F_1\oplus F_2 \oplus
\cdots \oplus F_r$ {\em decomposes into an exterior product along
the decomposition $E=F_1\oplus F_2 \oplus \cdots \oplus F_r$} if for
any $a\in \g g$, $\forall i, a\vrule_{F_i}\in \g g$.
\end{defi}

\begin{prop}
If $\g g\subset \g{gl}(E)$ is a Berger algebra and preserves $V_1$,
$V_2$, $W_1$ and $W_2$ such that $E=V_1\oplus V_2=W_1\oplus W_2$
then $E$ decomposes into an exterior product along the decomposition
$F \oplus \tilde F$. If in addition $\g g$ preserves the reflexive
form $\langle\cdot,\cdot\rangle$ and if $W_1=V_1^\perp$ and
$W_2=V_2^\perp$, then $E$ decomposes into an exterior product along
the decomposition $F_e\oplus F_\tau\oplus \tilde F$.
\end{prop}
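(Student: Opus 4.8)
The plan is to reduce the statement to a property of the \emph{generators} $R(x,y)$ of $\g{g}$, using the elementary fact that truncation to a block of an invariant decomposition is a Lie morphism. Precisely: if $H\subset\g{gl}(E)$ is a Lie subalgebra preserving a direct sum $E=\bigoplus_k F_k$ and $S$ is a set of Lie generators of $H$, then $\rho_k\colon a\mapsto a\vrule_{F_k}$ (acting by $a$ on $F_k$ and by $0$ on the other summands) is the projection of $\bigoplus_k\g{gl}(F_k)$ onto its $k$-th factor, hence a Lie morphism; so $\rho_k(H)$ is the Lie algebra generated by $\rho_k(S)$, and $\rho_k(H)\subset H$ as soon as $\rho_k(S)\subset H$. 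Taking $H=\g{g}$ and $S=\{\,R(x,y):R\text{ a generating formal curvature tensor},\ x,y\in E\,\}$, it suffices in each case to check that $R(x,y)\vrule_{F_k}\in\g{g}$ for every such $R$ and every summand $F_k$ of the relevant decomposition; each such $R$ matches $\g{g}$, so the preceding lemma on Berger algebras with several invariant factors and the first Bianchi identity are available.

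For the general statement, take the invariant decomposition $E=F\oplus\tilde F$; if one factor is $\{0\}$ there is nothing to prove, so assume both are non-zero. By Proposition~\ref{tildefv1v2} and the proposition identifying $\tilde F$ with $V\otimes\corps^2$ (equivalently Theorem~\ref{ts}(iv)), $\tilde F\cap V_1$ and $\tilde F\cap V_2$ are non-zero invariant subspaces, isomorphic as $\g{g}$-modules through an isomorphism commuting with $\g{g}$; in particular an element of $\g{g}$ killing one of them kills the other. Thus $E=F\oplus(\tilde F\cap V_1)\oplus(\tilde F\cap V_2)$ is a decomposition into at least three non-zero invariant subspaces, and that lemma applies: expanding $R(x,y)$ bilinearly along the components of $x,y$ in this decomposition, each elementary term $R(a,b)$ is supported on the union of the two factors containing $a$ and $b$. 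A term with both arguments in $F$, or both in $\tilde F$, is then supported inside $F$, respectively inside $\tilde F$; a term with one argument in $F$ and the other in $\tilde F$ (say in $\tilde F\cap V_1$, the other case being symmetric) has vanishing $(\tilde F\cap V_2)$-block by the lemma, hence vanishing $(\tilde F\cap V_1)$-block by the module isomorphism, so is again supported inside $F$. In every case $R(x,y)\vrule_F\in\g{g}$, and the principle of the first paragraph gives $a\vrule_F\in\g{g}$ for all $a\in\g{g}$, which is the asserted exterior-product decomposition along $F\oplus\tilde F$.

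In the reflexive case, take instead the orthogonal decomposition $E=F_e\oplus^\perp F_\tau\oplus^\perp\tilde F$ of Theorem~\ref{ts}, whose three factors are non-degenerate. Since every $R(x,y)\in\g{g}$ is skew-adjoint for the form, $\langle R(x,y)z,w\rangle$ is antisymmetric in $z,w$ as well as in $x,y$, and with the first Bianchi identity this yields the pair symmetry $\langle R(x,y)z,w\rangle=\langle R(z,w)x,y\rangle$. Now let $a,b$ lie in two distinct factors and $z,w$ in the first of them: by the lemma $R(z,w)$ is supported on that first factor, so $R(z,w)a$ lies in it and is orthogonal to $b$, whence pair symmetry gives $\langle R(a,b)z,w\rangle=0$ for all such $z,w$; non-degeneracy of the first factor forces the corresponding block of $R(a,b)$ to vanish, and symmetrically for the second, so $R(a,b)=0$. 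Therefore $R(x,y)=R(x_e,y_e)+R(x_\tau,y_\tau)+R(x_{\tilde F},y_{\tilde F})$ with each term supported on the single factor indicated, so $R(x,y)\vrule_{F_k}=R(x_k,y_k)\in\g{g}$ for each of $F_e,F_\tau,\tilde F$, and the principle of the first paragraph completes the proof.

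The main obstacle is exactly the control of the mixed curvature terms $R(a,b)$ with $a$ and $b$ in different invariant factors: the preceding lemma only bounds their support, not their vanishing, and the two cases require different extra input — the product structure $\tilde F\cong V\otimes\corps^2$ of the part on which $\theta$ is invertible in the general case, and the pair symmetry forced by the reflexive form together with the orthogonality of the canonical factors in the reflexive case.
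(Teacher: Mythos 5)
Your treatment of the first assertion is correct and is essentially the paper's own argument made explicit: the paper likewise invokes the preceding lemma together with the fact that $\tilde F\simeq \tilde F_0\oplus\tilde F_0$, i.e.\ it splits $\tilde F$ as $(\tilde F\cap V_1)\oplus(\tilde F\cap V_2)$ with the two summands isomorphic as $\g g$-modules, so that a mixed generator $R(a,b)$ with $a\in F$, $b\in\tilde F\cap V_1$ kills $\tilde F\cap V_2$ by the lemma and hence kills all of $\tilde F$. Your reduction to the generators $R(x,y)$ via the Lie morphism $a\mapsto a\vrule_{F_k}$ is also the intended mechanism, and you correctly identify the mixed terms as the only real issue.

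The reflexive case is where you diverge from the paper, and your route has a gap. You derive the pair symmetry $\langle R(x,y)z,w\rangle=\langle R(z,w)x,y\rangle$ from the claim that $\langle R(x,y)z,w\rangle$ is antisymmetric in $(z,w)$. That claim uses the symmetry of the form: skew-adjointness of $R(x,y)$ gives $\langle R(x,y)z,w\rangle=-\langle z,R(x,y)w\rangle$, which equals $-\langle R(x,y)w,z\rangle$ only when $\langle\cdot,\cdot\rangle$ is symmetric. For an antisymmetric reflexive form --- which the proposition allows, since the paper's reflexive forms are ``symmetric or antisymmetric'' --- one gets $+\langle R(x,y)w,z\rangle$, the tensor is symmetric in its last two slots, and pair symmetry cannot hold except trivially (it would identify an expression antisymmetric in $(x,y)$ with one symmetric in $(x,y)$). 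Consistently, the paper only establishes the identity $(*)$ later, in the metric subsection, under the symmetry hypothesis. So your argument for the mixed blocks involving $F_e$ and $F_\tau$ does not go through for a general reflexive form. The paper's route avoids this: it refines the decomposition using $F_e=F_e^+\oplus(F_e^+)^*$ from Theorem~\ref{ts}, where the two totally isotropic halves are dual as $\g g$-modules via the form, so an element of $\g g$ annihilating one annihilates the other; combined with the lemma and with the doubling of $\tilde F$ this disposes of every mixed term for an arbitrary reflexive form. Your argument is fine for symmetric forms, but you should either state that restriction or replace the pair-symmetry step by the duality argument on $F_e$.
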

\begin{proof} For the first affirmation, this results from the preceding lemma
and from the fact that $\tilde F$ is of type $\tilde
F_0\otimes\reels^2 \simeq \tilde F_0\oplus \tilde F_0$. In the
reflexive case $F_e$ is of type $F_e^+\oplus(F_e^+)^*$ from which by
a similar argument one can deduce the second affirmation.
\end{proof}

\subsection{Metric case} In the metric case the invariant non
degenerate reflexive form $\langle \cdot,\cdot\rangle$ is supposed
to be bilinear symmetric and  $\corps=\reels$.

It is well known that from the invariance of $\langle
\cdot,\cdot\rangle$, the first Bianchi identity and from the
antisymmetry in the two first arguments of $R$, one can deduce
$$\forall x,y,z,t\in E, \langle R(x,y)z,t\rangle =\langle
R(z,t)x,y\rangle,(*)$$ for any formal curvature tensor $R$ matching
the algebra.

\begin{lem}
If the algebra $\g g$ is Berger, preserves two supplementary spaces
$V_1$ et $V_2$ and a non degenerate symmetric bilinear form $\langle
\cdot, \cdot \rangle$, and if for ${\mathcal V}=(E, V_1, V_2,
V_1^\perp, V_2^\perp)$ $E=F_e$, then one has for any formal
curvature tensor $R$ matching $\g g$ and $x,y\in V_1$, $R(x,y)=0$
and for $x',y'\in V_2$, $R(x',y')=0$.
\end{lem}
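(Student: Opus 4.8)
The plan is to confine the image of each curvature operator $R(x,y)$ with $x,y\in V_1$ inside the subspace $V_1\cap V_2^\perp$, and then to observe that the hypothesis $E=F_e$ forces that subspace to vanish; the argument for $x',y'\in V_2$ will then follow by symmetry. Here I keep the standing convention of the metric case, $W_j=V_j^\perp$.

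First I would extract the consequence of $E=F_e$. By Proposition~\ref{ffforth} (equivalently Theorem~\ref{ts}(i)) one has the orthogonal decomposition $E=F_e\oplus^\perp F_\tau\oplus^\perp\tilde F$, so $E=F_e$ forces $F_\tau=\{0\}$. Since $F_\tau(1)=(V_1\cap V_2^\perp)\oplus(V_2\cap V_1^\perp)\subset F_\tau$, this gives $V_1\cap V_2^\perp=\{0\}$ and, symmetrically, $V_2\cap V_1^\perp=\{0\}$. Next comes the Bianchi step: fix $x,y\in V_1$ and an arbitrary $z\in V_2$. As $R$ matches $\g g$ and $\g g$ preserves $V_1$ and $V_2$, all of $R(x,y),R(y,z),R(z,x)$ preserve $V_1$ and $V_2$; hence $R(y,z)x,R(z,x)y\in V_1$ while $R(x,y)z\in V_2$, and the first Bianchi identity $R(x,y)z+R(y,z)x+R(z,x)y=0$ forces $R(x,y)z\in V_1\cap V_2=\{0\}$. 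Thus $V_2\subset\ker R(x,y)$.

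Finally I would use that $R(x,y)\in\g g$ is skew with respect to $\langle\cdot,\cdot\rangle$. From $V_2\subset\ker R(x,y)$ and $\langle R(x,y)u,v\rangle=-\langle u,R(x,y)v\rangle$ one gets $\langle R(x,y)u,v\rangle=0$ for all $u\in E$ and $v\in V_2$, i.e. $\im R(x,y)\subset V_2^\perp$. On the other hand $R(x,y)$ preserves $V_1$ and kills $V_2$, so $\im R(x,y)=R(x,y)V_1\subset V_1$. Combining, $\im R(x,y)\subset V_1\cap V_2^\perp=\{0\}$, that is $R(x,y)=0$; and the claim for $x',y'\in V_2$ follows by exchanging the roles of $V_1,V_2$ (and of $V_1^\perp,V_2^\perp$), an exchange under which $F_e$, and hence the hypothesis, is invariant. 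I expect the only genuinely substantive point to be the first step — recognizing that $E=F_e$ annihilates precisely the space $V_1\cap V_2^\perp$ into which the Bianchi identity and skew-symmetry pin the image of $R(x,y)$; the two remaining steps are routine, relying only on invariance of $V_1,V_2$ and skew-adjointness of elements of $\g g$.
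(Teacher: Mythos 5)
Your proof is correct and follows essentially the same route as the paper: the Bianchi identity plus invariance of $V_1,V_2$ shows $R(x,y)$ kills $V_2$, and skew-adjointness then pins $\im R(x,y)$ into $V_1\cap V_2^\perp$, which vanishes because $E=F_e$. The only difference is cosmetic: you spell out explicitly (via $V_1\cap V_2^\perp\subset F_\tau(1)\subset F_\tau=\{0\}$) the step the paper compresses into the parenthetical ``(in $F_e$)''.
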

\begin{proof} From the first Bianchi identity one has $\forall z'\in V_2,
R(x,y)z'+ R(y,z')x+ R(z',x)y=0$. We have: $R(x,y)z'\in V_2$,
$R(y,z')x \in V_1$ and $R(z',x)y\in V_1$ by invariance of $V_1$ and
$V_2$ under the action of $R(x,y)\in\g{g}$ (respectively
$R(y,z')\in\g{g}$, $R(z',x)\in\g{g}$. As $V_1$ and $V_2$ form a
direct sum, one has: $R(x,y)z'=0$.

Let's show us further $\forall z\in V_1, R(x,y)z=0$. Let $t'\in
V_2$. $\langle R(x,y)z,t'\rangle =-\langle z,R(x,y)t'\rangle =0$, by
the preceding argument. So from $R(x,y)z\in V_1$, it is clear that
$R(x,y)z\in V_1\cap V_2^\perp=\{0\}$ (in $F_e$).

As a conclusion for $x,y\in V_1, R(x,y)=0$. Similarly for $x',y'\in
V_2, R(x',y')=0$.\end{proof}

\begin{theo}
If the algebra $\g g\subset\g{gl}(E)$ is Berger, preserves the the
two supplementary spaces $V_1$ and $V_2$ and a non degenerate
symmetric bilinear form $\langle \cdot, \cdot \rangle$, for
${\mathcal V}=(E, V_1, V_2, V_1^\perp, V_2^\perp)$, one has: $\g{g}E
\subset \ker \theta_{\mathcal V}$ and $\g{g}\im \theta_{\mathcal V}=
\{0\}$.
\end{theo}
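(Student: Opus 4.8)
The plan is as follows. First, the two conclusions are equivalent: every $a\in\g g$ commutes with $\theta:=\theta_{\mathcal V}$, so $\theta(aE)=a\,\theta(E)=a\,\im\theta$, whence $\g g E\subset\ker\theta$ if and only if $\g g\,\im\theta=\{0\}$. I will prove $\g g E\subset\ker\theta$, where $\ker\theta=F(1)=\bigoplus_{i,j}V_i\cap W_j$ by Proposition~\ref{kerimtheta} and Lemma~\ref{F1sd}.

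The second step is to reduce to the three invariant summands. By the proposition asserting that $\g g$ decomposes into an exterior product along the orthogonal sum $E=F_e\oplus^\perp F_\tau\oplus^\perp\tilde F$ (Proposition~\ref{ffforth}, Theorem~\ref{ts}), and since $F_e,F_\tau$ are $\theta$-stable (Proposition~\ref{thetafs}), writing $\g g_\mu$ for the restriction of $\g g$ to $F_\mu$ and $\theta_\mu:=\theta\vrule_{F_\mu}$, one has $\ker\theta=\bigoplus_\mu\ker\theta_\mu$, so it suffices to prove $\g g_\mu F_\mu\subset\ker\theta_\mu$ for $\mu\in\{e,\tau,\tilde F\}$. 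On each summand, with its induced decomposition $\mathcal V_\mu$, I would use: (a) $\theta_\mu=\theta_{\mathcal V_\mu}$ and $\ker\theta_\mu=\bigoplus_{i,j}(F_\mu\cap V_i\cap W_j)$, the projections restricting to $F_\mu$ because $F_\mu$ is homogeneous for all the sums in play (Proposition~\ref{treillishomogene}); (b) $F_\mu\cap W_j=(F_\mu\cap V_j)^{\perp_{F_\mu}}$ and $F_\mu$ is nondegenerate, from Proposition~\ref{treillishomogene} together with the orthogonality of the three summands, so that $\mathcal V_\mu$ is again of the type considered; (c) for every formal curvature tensor $R$ matching $\g g$, its restriction $R^{(\mu)}$ to $F_\mu$ is a formal curvature tensor matching $\g g_\mu$, and $\g g_\mu$ is the Berger algebra it generates. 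The content of (c) is that the cross-terms of $R(x,y)\vrule_{F_\mu}$ vanish: if $x_\alpha\in F_\alpha$, $y_\beta\in F_\beta$, $z\in F_\mu$ with $\mu\notin\{\alpha,\beta\}$ this is the lemma relating curvature to exterior products, while if $\alpha=\mu\neq\beta$ the first Bianchi identity gives $R(x_\mu,y_\beta)z=-R(y_\beta,z)x_\mu\in F_\mu$ and then the symmetry relation~$(*)$, together with $F_\mu\perp F_\beta$ and the nondegeneracy of $F_\mu$, forces this element to $0$; hence $R(x,y)\vrule_{F_\mu}=R^{(\mu)}(x_\mu,y_\mu)$.

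The heart of the proof is then a single computation, carried out inside each $F_\mu$ for an arbitrary matching $R$: $R^{(\mu)}(x,y)F_\mu\subset\ker\theta_\mu$ for all $x,y\in F_\mu$. For $x,y\in F_\mu\cap V_1$ I would argue as in the proof of the metric-case lemma preceding the theorem: first Bianchi annihilates $R^{(\mu)}(x,y)$ on $F_\mu\cap V_2$, and then skew-adjointness for the form on $F_\mu$ puts $R^{(\mu)}(x,y)(F_\mu\cap V_1)$ into $(F_\mu\cap V_1)\cap(F_\mu\cap V_2)^{\perp_{F_\mu}}=F_\mu\cap V_1\cap W_2\subset\ker\theta_\mu$; symmetrically for $x,y\in F_\mu\cap V_2$ (landing in $F_\mu\cap V_2\cap W_1$), and $R^{(\mu)}(x,y)$ kills the opposite $V$-piece. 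For a mixed pair $x\in F_\mu\cap V_1$, $y\in F_\mu\cap V_2$ and $z\in F_\mu\cap V_i$: first Bianchi places $R^{(\mu)}(x,y)z$ in $F_\mu\cap V_i$; and applying $(*)$ with $t\in F_\mu\cap V_i$, the case just proved makes $R^{(\mu)}(z,t)x$ orthogonal to $y$ (it lies in $W_2=V_2^\perp$ if $i=1$, and in $F_\mu\cap V_1\cap V_2\cap W_1=\{0\}$ if $i=2$), so $\langle R^{(\mu)}(x,y)z,t\rangle=0$, whence $R^{(\mu)}(x,y)z\in(F_\mu\cap V_i)\cap(F_\mu\cap W_i)=F_\mu\cap V_i\cap W_i\subset\ker\theta_\mu$. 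Bilinearity in $(x,y)$ then gives $R^{(\mu)}(x,y)F_\mu\subset\ker\theta_\mu$ for all $x,y\in F_\mu$; since $\ker\theta_\mu$ is $\g g_\mu$-stable and $\g g_\mu$ is generated by the operators $R^{(\mu)}(x,y)$, we conclude $\g g_\mu F_\mu\subset\ker\theta_\mu$. Summing over $\mu$ yields $\g g E\subset\ker\theta$, hence $\g g\,\im\theta=\{0\}$. (On $\tilde F$, where $\theta$ is invertible, $\ker\theta_{\tilde F}=\{0\}$, so the statement there amounts to $\g g_{\tilde F}=\{0\}$.)

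I expect the most delicate point to be fact (c) — that passing to an exterior-product factor preserves the Berger property, i.e. that the cross-terms of the restricted curvature tensor really vanish — together with the companion book-keeping in the mixed-pair step; both rest on playing the first Bianchi identity off against the symmetry $(*)$ of $R$, and on observing that inside $F_\tau$ (resp. $\tilde F$) the only pieces of $\ker\theta$ that survive are $V_1\cap W_2\oplus V_2\cap W_1$ (resp. $\{0\}$), which is what makes the common-pair and mixed-pair conclusions land exactly in $\ker\theta_\mu$.
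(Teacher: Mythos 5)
Your proof is correct, and it takes a genuinely different route from the paper's. The paper first passes to the finer orthogonal decomposition of Theorem~\ref{ts}, $E=(F_e^+\oplus (F_e^+)^*)\oplus^\perp F_\tau^+\oplus^\perp F_\tau^-\oplus^\perp(\tilde F_0\otimes\reels^2)$, argues that matching curvature tensors vanish on pairs drawn from distinct orthogonal summands or from a totally isotropic summand (so that $\g g$ is generated by the $R(x,y)$ with $(x,y)$ in $F_e^+\times(F_e^+)^*$, $F_\tau^+\times F_\tau^+$ or $F_\tau^-\times F_\tau^-$), and then handles $F_e$ via the preceding lemma and $F_\tau$ via a separate computation that uses the self-adjointness of $\theta$ and the interplay of $\theta$ with the generalized eigenspaces $F_\tau^\pm$ of $L'$. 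You instead stay with the coarser decomposition $E=F_e\oplus^\perp F_\tau\oplus^\perp\tilde F$ and run one uniform computation on each block, locating $R^{(\mu)}(x,y)z$ in the appropriate $F_\mu\cap V_i\cap W_j\subset\ker\theta$ by alternating the first Bianchi identity with the symmetry $(*)$. Your route buys three things: it makes explicit the cross-term vanishing that justifies restricting a Berger algebra to an exterior factor (compressed in the paper into ``$R(x,y)=0$ for $x\perp y$''); it treats the $\tilde F$ block explicitly, which the paper's list of generating pairs silently omits even though $\tilde F\cap V_i$ is non-degenerate rather than isotropic, so that block genuinely requires an argument of your kind; and it bypasses the $F_\tau^\pm$-eigenspace step, whose stated justification ``$z\perp\theta(t)$'' is delicate given that $\theta$ maps $F_\tau^-$ into $F_\tau^+$. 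What the paper's route buys is brevity on $F_e$ and $F_\tau$ once Theorem~\ref{ts} is available. One point you should justify more fully: the identity $(F_\mu\cap V_i)^{\perp_{F_\mu}}=F_\mu\cap W_i$ does not follow from Proposition~\ref{treillishomogene} alone; it requires the dimension counts of Propositions~\ref{fsvw} and~\ref{tildefv1v2} combined with the non-degeneracy of each block coming from Proposition~\ref{ffforth}. With that supplied, the argument is complete.
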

\begin{proof} By theorem~\ref{ts} one has the decomposition into sub-representations $E=(F_e^+\oplus (F_e^+)^*)\oplus^\perp F_\tau^+ \oplus^\perp F_\tau^- \oplus^\perp
(\tilde F_0 \otimes\reels^2)$ with  $F_e^+$ (and $(F_e^+)^*$ totally
isotropic, $F_\tau^+$, $F_\tau^-$ and $\tilde F_0$ non degenerate

For $R$ a formal curvature tensor matching $\g g$, as $R(x,y)=0$ for
$x\perp y$ (by (*)), $\g g$ is generated by the $R(x,y)$ for
$(x,y)\in F_e^+\times (F_e^+)^*$, (respectively $(x,y)\in
F_\tau^+\times F_\tau^+$, resp. $(x,y)\in F_\tau^-\times F_\tau^-$).
$R(x,y)$ acts only on $F_e^+\oplus (F_e^+)^*$ (respectively
$F_\tau^+$, resp. $F_\tau^-$).

For $(x,y)\in F_e^+\times (F_e^+)^*$, $z\in V_1\cap F_e$, $t\in
V_1\cap F_e$, one has $\langle R(x,y)z,t\rangle =\langle
R(z,t)x,y\rangle =0$, and similarly for $(x,y)\in F_e^+\times
(F_e^+)^*$, $z\in V_2\cap F_e$, $t\in V_2\cap F_e$, one has $\langle
R(x,y)z,t\rangle=0$. So we obtain: $\g g F_e \subset V_1 \cap
V_1^\perp \oplus V_2 \cap V_2^\perp \subset \ker(\theta_{\mathcal
V})$.

Recall that $\theta$ maps $W_1$ into $W_2$ and $W_2$ into $W_1$.

For $(x,y)\in F_\tau^+\times F_\tau^+$, $z\in F_\tau^+$, $t\in
F_\tau^-$, one has: $\langle \theta (R(x,y)z),t\rangle =\langle
R(x,y)z,\theta(t)\rangle=\langle R(z,\theta(t))x,y\rangle=0$ because
$z \perp \theta(t)$. So $\theta (R(x,y)F_\tau^+)\subset F_\tau^-
\cap (F_\tau^-)^\perp=\{0\}$. Similarly for $(x,y)\in F_\tau^-\times
F_\tau^-$, $\theta (R(x,y)F_\tau^-)=\{0\}$, so $\g g F_\tau
\subset\ker(\theta_{\mathcal V})$.

$\g g E \subset \ker(\theta_{\mathcal V})$ follows from the
preceding observations. As $\theta$ commutes with every element of
$\g{g}$, we will have as well: $\g{g}
 \im \theta =\g{g} \theta(E) \subset \theta \g{g} E
 =\{0\}$.
\end{proof}

\begin{corr}
Let $E$ be a metric indecomposable representation of the Berger algebra $\g g$ preserving the
decomposition $E=V_1\oplus V_2$ with $V_1$ or $V_2$ degenerate. For
${\mathcal V}=(E, V_1, V_2, V_1^\perp, V_2^\perp)$, one has:
$\theta_{\mathcal V}^2=0$.
\end{corr}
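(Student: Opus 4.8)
The plan is to deduce the corollary directly from the preceding theorem by translating ``indecomposable'' and ``$V_1$ or $V_2$ degenerate'' into statements about the canonical decomposition $E=F_e\oplus F_\tau\oplus\tilde F$. First I would invoke Proposition~\ref{treillishomogene} (or Proposition~\ref{ffforth}) together with the fact, noted just before Theorem~\ref{ts}, that in the weakly irreducible — and a fortiori in the indecomposable — case, having a decomposition into two degenerate sub-representations forces the canonical decomposition to collapse. More precisely, since $\g g$ is a Berger algebra, by the first Proposition of the ``Application to holonomy'' subsection $E$ decomposes into an exterior product along $F_e\oplus F_\tau\oplus\tilde F$, so each of these three invariant summands is itself $\g g$-invariant with $\g g$ acting block-diagonally; indecomposability of $E$ then forces exactly one of $F_e$, $F_\tau$, $\tilde F$ to be all of $E$ and the other two to vanish. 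The remaining task is to rule out $E=F_\tau$ and $E=\tilde F$.

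Next I would eliminate the cases $E=F_\tau$ and $E=\tilde F$ using the hypothesis that $V_1$ or $V_2$ is degenerate, i.e.\ $V_i\cap V_i^\perp\neq\{0\}$ for some $i$. By Theorem~\ref{ts}, if $E=\tilde F$ then $E=\tilde F_0\otimes\corps^2$ with $\tilde F_0$ non-degenerate, and if $E=F_\tau$ then $E=F_\tau^+\oplus^\perp F_\tau^-$ with both summands non-degenerate. In either situation one checks that no $\g g$-invariant summand of the displayed type can meet its own orthogonal complement nontrivially — concretely, $V_1\cap V_1^\perp$ and $V_2\cap V_2^\perp$ both lie in $\ker\theta_{\mathcal V}=\bigoplus_{i,j}V_i\cap W_j\subset F(1)\subset F$, which is disjoint from $\tilde F$ by Proposition~\ref{kerimtheta}, and it also meets $F_\tau$ only in $F_\tau(1)=(V_1\cap W_2)\oplus(V_2\cap W_1)$; so a degenerate $V_i$ forces $F_e\neq\{0\}$. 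Combined with indecomposability this yields $E=F_e$.

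Finally, with $E=F_e$ established I would apply the last Theorem of the excerpt: for a Berger algebra preserving $V_1,V_2$ and a non-degenerate symmetric bilinear form with $W_j=V_j^\perp$ and $E=F_e$, one has $\g g\, E\subset\ker\theta_{\mathcal V}$ and $\g g\,\im\theta_{\mathcal V}=\{0\}$. Since $\theta_{\mathcal V}$ commutes with every element of $\g g$ and is built as a difference of products of the $\g g$-equivariant projections, we get $\im\theta_{\mathcal V}^2=\theta_{\mathcal V}(\theta_{\mathcal V}(E))$; but $\theta_{\mathcal V}(E)=\im\theta_{\mathcal V}$ and on $F_e=E$ the map $\theta_{\mathcal V}$ is nilpotent, while more sharply $\theta_{\mathcal V}(E)\subset\g g\,E\cup\ldots$ — here the clean route is: $\im\theta_{\mathcal V}=\theta_{\mathcal V}(E)$ is spanned by images $\theta_{\mathcal V}(x)=(q_j p_i - p_i q_j)(x)$, and applying $\theta_{\mathcal V}$ once more together with $\g g E\subset\ker\theta_{\mathcal V}$ shows $\theta_{\mathcal V}^2(E)=\{0\}$, i.e.\ $\theta_{\mathcal V}^2=0$.

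The main obstacle I anticipate is the bookkeeping in the second paragraph: making rigorous the claim that indecomposability plus the exterior-product decomposition forces exactly one of the three canonical pieces to be everything, and then showing that only $F_e$ is compatible with a genuinely degenerate $V_i$. Once that structural reduction is in hand, the conclusion $\theta_{\mathcal V}^2=0$ is essentially a restatement of the final theorem, so the real content is checking that the hypotheses of that theorem are met.
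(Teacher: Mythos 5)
Your structural reduction in the first two paragraphs is sound and matches what the paper takes for granted: the orthogonal invariant decomposition $E=F_e\oplus^\perp F_\tau\oplus^\perp\tilde F$ plus indecomposability forces one summand to be all of $E$, and $V_i\cap V_i^\perp=V_i\cap W_i\subset F_e(1)\subset F_e$ rules out $F_\tau$ and $\tilde F$, so $E=F_e$. The problem is your final step. You assert that $\theta_{\mathcal V}^2=0$ is ``essentially a restatement'' of the theorem $\g g E\subset\ker\theta_{\mathcal V}$, $\g g\,\im\theta_{\mathcal V}=\{0\}$, by ``applying $\theta_{\mathcal V}$ once more together with $\g g E\subset\ker\theta_{\mathcal V}$.'' That inference requires $\im\theta_{\mathcal V}\subset\g g E$, and there is no reason for this: $\theta_{\mathcal V}$ is built from the projections $p_i,q_j$, which commute with $\g g$ but do not belong to $\g g$. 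The relations $\theta_{\mathcal V}a=a\theta_{\mathcal V}=0$ for all $a\in\g g$ place no constraint whatsoever on $\theta_{\mathcal V}^2$ by themselves (for instance they are vacuous when $\g g$ acts trivially), so the conclusion cannot be purely formal; some hypothesis on $E$ must be used again.

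What the paper actually does at this point is invoke indecomposability a second time. Suppose $\theta_{\mathcal V}^2\neq 0$; then $\im\theta_{\mathcal V}\not\subset\ker\theta_{\mathcal V}$, so one may choose a nonzero complement $A$ of $\ker\theta_{\mathcal V}\cap\im\theta_{\mathcal V}$ in $\im\theta_{\mathcal V}$, and a complement $B$ of $\ker\theta_{\mathcal V}+\im\theta_{\mathcal V}$ in $E$. Since $A\subset\im\theta_{\mathcal V}$, the theorem gives $\g g A=\{0\}$, so $A$ is invariant with trivial action; and $\ker\theta_{\mathcal V}+B$ is an invariant complement of $A$ because $\g g E\subset\ker\theta_{\mathcal V}$. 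This exhibits a nontrivial splitting of $E$ into invariant subspaces (with $\g g$ acting trivially on one factor), contradicting indecomposability. You should replace your last paragraph with an argument of this kind; as written, the step from the theorem to $\theta_{\mathcal V}^2=0$ is a genuine gap.
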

\begin{proof} Recall that in the metric indecomposable case
with $E=V_1 \oplus V_2$ where $V_1$ or $V_2$ is degenerate, one has
$E=F_e$. Suppose $\theta_{\mathcal V}^2$ is non zero. In this case
one can choose a non trivial supplementary space $A$ of $\ker \theta
\cap \im \theta$ in $\im \theta$. $A$ is also a supplementary space
of $\ker \theta$ in $\ker \theta + \im \theta$. Let us choose a
supplementary space $B$ of $\ker \theta + \im \theta$ in $E$. One
has: Because $A \subset \im \theta$, there exists $A'$ subset of $E$
such that $A=\theta A'$. For $a\in\g{g}$, $aA=a\theta A'=\theta a
A'=\{0\}$ by the preceding theorem because $a A'\subset \g{g} E$. So
$A$ is invariant for the action of $\g{g}$. $\ker \theta + B$ is a
supplementary space of $A$, which is also invariant by $\g{g}$,
because $\g{g}(\ker \theta + B)\subset \ker \theta \subset \ker
\theta + B$. So we obtain a new decomposition of $E$ into two
$\g{g}$-invariant spaces $A$ and $\ker \theta + B$. the action of
$\g{g}$ on $A$ is trivial. So the action of $\g g$ decomposes into
an exterior product along the decomposition $A \oplus (\ker \theta +
B)$, in contradiction to what we supposed.
\end{proof}

\end{document}